\newtheoremstyle{special}%
{}%
{}%
{}%
{}%
{\scshape}%
{.}%
{.5em}%
{}
\newtheorem{theorem}{Theorem}
\newtheorem{proposition}[theorem]{Proposition}
\newtheorem{lemma}[theorem]{Lemma}
\newtheorem{cor}[theorem]{Corollary}
\newtheorem{definition}[theorem]{Definition}
\theoremstyle{special}
\newtheorem{remark}[theorem]{Remark}
\newtheorem{example}[theorem]{Example}
\renewcommand{\epsilon}{\varepsilon}
\renewcommand{\L}{\mathcal{L}}
\def\N{\mathbb{N}}
\def\R{\mathbb{R}}
\def\B{\mathcal{B}}
\def\W{\mathcal W}
\def\S{\mathcal{S}}
\DeclareMathOperator{\var}{Var}
\DeclareMathOperator{\esssup}{esssup}
\DeclareMathOperator{\essinf}{essinf}
\DeclareMathOperator{\spn}{span}
\title{Quenched limit theorems for expanding on average cocycles}
\date{\today}
\author{Davor Dragi\v cevi\'c\footnote{Department of Mathematics, University of Rijeka, Croatia. \texttt{E-mail:ddragicevic@math.uniri.hr}}, Julien Sedro\footnote{Universit\'e de Paris and Sorbonne Universit\'e, CNRS, Laboratoire de Probabilit\'es, Statistique et Mod\'elisation, F-75013 Paris, France. \texttt{Email:sedro@lpsm.paris}}}
\begin{document}
	\maketitle 

\begin{center}
Dedicated to the memory of Vlado Dragi\v cevi\' c (1945-2021)
\end{center}
	
	\begin{abstract}
		We prove quenched versions of a central limit theorem, a large deviations principle as well as a local central limit theorem for expanding on average cocycles. This is achieved by building an appropriate modification of the spectral method for nonautonomous dynamics developed
		by Dragi\v cevi\' c et al. (Comm Math Phys 360: 1121--1187, 2018), to deal with the case of random dynamics that exhibits nonuniform decay of correlations, which are ubiquitous in the context of the multiplicative ergodic theory. 
		Our results provide an affirmative answer to a question posed by Buzzi (Comm Math Phys 208: 25-54, 1999).
	\end{abstract}
	\section{Introduction}
	In the recent years, a great deal of effort has been devoted to the investigation of the statistical properties of \emph{random} dynamical systems. Indeed, those are key tools to model many natural phenomena, including the transport in complex environments such as in the ocean or the atmosphere~\cite{Arnold}: it is therefore crucial to understand their long term quantitative behavior. Among many remarkable contributions, we particularly emphasize those dealing with
	the decay of correlations~\cite{ABR, BB, BBR, BBMD, Buzzi, DL}, various (quenched or annealed) limit laws~\cite{ANV, ALS, DPZ, D, D1, D2, DH, H1, H3, HK,K2, NTV, OS, Su}, as well as recent results devoted to the linear response of random dynamical systems~\cite{BRS, DS, RS}. For similar results in the closely related context of sequential dynamical systems, we refer to~\cite{B1, B2, HNTV, KL, LS1, LS2, H2, NSV} and references therein.

	\subsection{Spectral method for limit theorems} 
	The Nagaev-Guivarc'h spectral method is a powerful approach to establish limit theorems for dynamical systems. Originally introduced in the context of Markov chains~\cite{N57, N61} and for the purpose of establishing 
	the central limit theorem in autonomous dynamics~\cite{GH88}, the method was broadly extended to cover many types of limit theorems: large deviations, Berry-Esseen estimates, local central limit theorems or almost-sure invariance principle, to name a few (we refer to the excellent survey \cite{G}, and references therein, for more details).
	\par\noindent In \cite{D} (see also~\cite{HK} for a similar approach), this method was further extended to the non-autonomous case of random compositions of expanding maps, allowing to obtain fiberwise (or \emph{quenched}) limit theorems, and relying on the careful study of the Lyapunov spectrum, and  the associated Oseledets spaces, of a so-called \emph{twisted transfer operator cocycle}. More precisely, given a measurable family $(T_\omega)_{\omega\in\Omega}$ of 
	(say) uniformly expanding maps of the unit interval $I$, parametrized by a probability space $(\Omega,\mathcal F,\mathds P)$ endowed with an invertible, measure-preserving and ergodic transformation  $\sigma:\Omega\circlearrowleft$, one forms the cocycle above $\sigma$:
	\begin{equation}\label{eq:cocycle}
		T_\omega^n:= T_{\sigma^{n-1}\omega}\circ\dots\circ T_\omega \quad \omega \in \Omega, \ n\in \N, 
	\end{equation}
	which is known to admit a family of equivariant measures $(\mu_\omega)_{\omega\in\Omega}$ (i.e. $\mu_\omega\circ T_{\omega}^{-1}=\mu_{\sigma\omega}$ for $\mathds P$-a.e $\omega\in\Omega$).
	Given a suitable observable $\psi:\Omega\times I\to\R$, one wants to study the asymptotic behavior of the random process $(\psi_{\sigma^n\omega}\circ T^n_\omega)_{n\ge 0}$ for fixed $\omega$ in a subset of full measure of $\Omega$, where $\psi_\omega=\psi(\omega, \cdot)$.  In this endeavor, one associates  to each transformation $T_\omega$, the  corresponding 
	\emph{transfer operator} $\L_\omega:=\L_{T_\omega}$.\footnote{i.e., for each $f\in L^1(I)$, $g\in L^\infty(I)$, $\int_I f\cdot g\circ T_\omega dm=\int_I \L_\omega f\cdot g dm$, where $m$ is the Lebesgue measure} In addition, one considers the twisted transfer operator $\L_\omega^\theta$, with twist parameter $\theta\in\mathbb C$, defined for $\phi\in L^1(I)$ by:
	\begin{equation}
		\L_\omega^\theta(\phi):=\L_\omega(e^{\theta\psi_\omega}\phi).
	\end{equation}
	\par\noindent The spectral method for random dynamics may then be crudely summarized in three key steps:
	\begin{enumerate}
		\item Relate the characteristic functions of the (random) Birkhoff sums $S_n\psi_\omega$, $n\in\mathbb N$ (see \eqref{bs}) to the $n$-th iterate of the twisted transfer cocycle.
		\item Establish the quasi-compactness (on a suitable Banach space) of the twisted transfer operator cocycle $(\L_\omega^\theta)_{\omega \in \Omega}$  for all parameters $\theta$ sufficiently close to $0$.
		\item Study the regularity w.r.t. the twist parameter $\theta$ of the top Lyapunov exponent $\Lambda(\theta)$ of the cocycle $(\L_\omega^\theta)_{\omega \in \Omega}$.
	\end{enumerate}
	Once established, these properties  allow to prove various (quenched) limit theorems, such as large deviations or (local) central limit theorem. 
	\par\noindent More recently, this spectral approach was used to establish quenched limit theorems for random hyperbolic dynamics \cite{D2} and random $U(1)$-extension of one-dimensional expanding maps \cite{CNW}.
	\subsection {Main contributions of the present paper} 
	Despite those developments, several natural and interesting types of random systems remained out of reach of the  method presented in~\cite{D}; most notably, systems exhibiting \emph{non-uniform in $\omega$} decay of correlations such as the  (general case of) random dynamical systems expanding on average  studied by Buzzi~\cite{Buzzi}.
	\par\noindent In the setting when all maps $T_\omega$ are chosen in a vicinity of a single (deterministic and transitive) map $T$ (which is precisely the setup of~\cite{CNW, D2}), the associated random system will exhibit uniform (in $\omega$) decay of correlations (with respect to suitable observables). However, if this is not the case, requiring the presence of a uniform decay of correlations becomes a restrictive condition (see~\cite[Section 2.2]{D1}).
	\\In sharp contrast to~\cite{D}, where contracting behavior in the family $(T_\omega)_{\omega\in\Omega}$ is only allowed on a set of zero measure (see \cite[Eq. (20)]{D}), the results in the present paper cover the general case of expanding on average random dynamics, that doesn't exhibit the  uniform in $\omega$ decay of correlations, providing a positive answer to a problem posed in~\cite[p.30]{Buzzi}. We emphasize that, compared with the precise class of expanding on average random dynamical systems introduced in~\cite{Buzzi}, we only need the additional mild assumption that the map $\omega \mapsto T_\omega$ has a countable range: this is to ensure that we can apply a suitable version of the multiplicative ergodic theorem~\cite{FLQ2}.\footnote{Allowing to choose among uncountably many maps would require to work with observables living in a separable space, e.g. fractional Sobolev spaces. In this aspect, we are limited by our choice to work with observables of bounded variation, which form a non-separable Banach space; in turns, this choice has other advantages.} 
	\\We note that there are previous works devoted to limit theorems for random dynamical system that allow contracting behavior on large measure sets \cite{ANV}, but only under the condition that the family $(T_\omega)_{\omega\in\Omega}$ only takes finitely many values (and assuming  that $(\Omega, \mathcal F, \mathbb P, \sigma)$ is a Bernoulli shift),  
	or don't require the presence of a uniform decay of correlations, such as the work of Kifer~\cite{K2} (partially inspired by the work of Cogburn~\cite{C}), as well as the first author and Hafouta~\cite{DH2}.
	Roughly speaking, the main idea in those papers is to pass to the associated induced system, where the inducing is done with respect to the region of $\Omega$ on which one has the uniform decay of correlations. Provided that we have a good control for the first entry time to this region, one can 
	deduce limit theorems for the original system from that of the induced one. The problematic aspect of this approach is the following: it is hard to ensure good control of the first entry time since a priori we cannot say anything about the region with respect to which we induce.
	Indeed, it remains an interesting open problem to build concrete examples to which the results of~\cite{K2, DH2} are applicable, that don't exhibit the uniform decay of correlations. 
	
	\par\noindent
	In the present paper we rather modify the approach introduced in~\cite{D}. The main technical novelty is the introduction of suitable ``adapted norms'' which depend on the random parameter $\omega$, and allow us to ``absorb the non-uniformity'' present in the constants appearing in the  decay of correlations estimate. The precise construction relies on the Oseledets splitting of the (unperturbed) transfer cocycle, and is reminiscent  of the classical construction due to Pesin for non-uniformly hyperbolic dynamics (see also \cite[Chapter 4]{Arnold} for random product of matrices). 
	\\Naturally, this will require some restrictions for the class of observables to which our limit theorems will apply. Indeed, we demand that the BV-norm of our observable $\psi_\omega$ is dominated by a tempered random variable $K$, that reflects the non-uniformity in the decay of correlations and is explicitly constructed from so-called ``Oseledets regularity functions", which are notoriously hard to compute: this can make our condition difficult to check in practice.
	\par\noindent Once the construction of adapted norms is carried out, the previously described steps follow by adapting classical perturbation techniques, based on the analytic version of the implicit function theorem, to our setting. In particular, this greatly simplify (when compared to~\cite{D}) the study of the regularity w.r.t. the twist parameter of the top Lyapunov exponent and the associated top Oseledets space generator. Apart from this, our proofs follow closely previous works, most notably \cite{D}.
	
	\subsection{Statement of main theorems} 
	We now turn to the statement of our main results. Given a probability space $(\Omega,\mathcal F,\mathds P)$, endowed with an invertible, measure-preserving ergodic transformation $\sigma:\Omega\circlearrowleft$, we let $(T_\omega)_{\omega\in\Omega}$ be a measurable family of maps, with the associated transfer operator cocycle $(\L_\omega)_{\omega\in\Omega}$ satisfying the requirements of Definition \ref{good}. The maps $T_\omega$ act on a set $X$ endowed with a probability measure $m$ and a notion of variation $\var$ as in Section \ref{sec:setup}. Then, there is (see Theorem \ref{7}) an equivariant family $(\mu_\omega)_{\omega \in \Omega}$ of absolutely continuous probability measures, $d\mu_\omega= v_\omega^0dm$ , where $v_\omega^0\in BV$.
	\par\noindent For an observable $\psi:\Omega\times X\to \R$, satisfying $\psi_\omega\in BV$, 
	\begin{equation}\label{ftr}  
	\quad  \esssup_{\omega\in\Omega}\|\psi_\omega\|_{BV}<+\infty 
	\end{equation} 
	and a centering condition \eqref{center}, we introduce the \emph{scaled observable} $\psi_K:\Omega\times X\to\R$, defined by
	\begin{equation}\label{scale}
		\psi_K(\omega,\cdot):=\frac{1}{K^2(\omega)}\psi(\omega,\cdot),
	\end{equation}	
	where the random variable $K$ is given by \eqref{LN}. We will be concerned with the asymptotic behavior of the Birkhoff sum associated to $\psi_K$, $S_n\psi_K(\omega,\cdot)$, defined by 
	\begin{equation}\label{scalebs}
		S_n\psi_K(\omega,x):=\sum_{i=0}^{n-1}\psi_K(\sigma^i\omega,T^i_\omega).
	\end{equation}
	Our first main result is a quenched Central Limit Theorem: 
	\begin{theorem}[Central Limit Theorem]\label{thm:CLT}
		Under the previous assumptions, and if the variance $\Sigma^2$, given by \eqref{var2}, satisfies $\Sigma^2>0$, then for any bounded continuous $\phi:\R\to\R$, for $\mathds P$-a.e $\omega\in\Omega$, one has
		\begin{equation}
			\lim_{n\to\infty}\int_X\phi\left(\frac{S_n\psi_K(\omega,x)}{\sqrt n}\right)d\mu_\omega(x)=\int\phi d \mathcal N(0,\Sigma^2),
		\end{equation}
		where $\mathcal N(0,\Sigma^2)$ is the centered normal law with variance $\Sigma^2$.
	\end{theorem}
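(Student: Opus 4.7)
The plan is to implement the Nagaev-Guivarc'h spectral method within the adapted-norms framework developed earlier in the paper. The starting point is the standard duality identity
\begin{equation*}
\int_X e^{(it/\sqrt{n}) S_n \psi_K(\omega, x)}\, d\mu_\omega(x) \;=\; \int_X \L_\omega^{n, it/\sqrt{n}} v_\omega^0 \, dm,
\end{equation*}
where $\L_\omega^{n,\theta} := \L_{\sigma^{n-1}\omega}^\theta \circ \cdots \circ \L_\omega^\theta$ with $\L_\omega^\theta(\phi):=\L_\omega(e^{\theta \psi_K(\omega,\cdot)} \phi)$ the twisted transfer operator of the scaled observable. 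By L\'evy's continuity theorem, it suffices to show that for every $t\in \R$ and for $\mathds{P}$-a.e.\ $\omega\in\Omega$,
\begin{equation*}
\int_X \L_\omega^{n, it/\sqrt n}  v_\omega^0 \, dm \;\xrightarrow[n\to\infty]{}\; e^{-t^2\Sigma^2/2}.
\end{equation*}

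Next, I would invoke the spectral structure of the twisted cocycle to be established earlier in the paper. The adapted norms combined with the analytic implicit function theorem provide, for $\theta$ in a small complex disc around $0$, a simple top Lyapunov exponent $\Lambda(\theta)$ together with equivariant Oseledets generators $v_\omega^\theta$ (density) and $\phi_\omega^\theta$ (dual functional), all depending analytically on $\theta$. At $\theta=0$ one recovers $\Lambda(0)=0$, $v_\omega^0$ the unperturbed equivariant density, and $\phi_\omega^0=\int \cdot \, dm$. The centering condition \eqref{center} forces $\Lambda'(0)=0$, while $\Lambda''(0)=\Sigma^2$ is precisely the variance defined in \eqref{var2}.

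The third step is to use the associated spectral decomposition of the iterates: for $|\theta|$ sufficiently small,
\begin{equation*}
\L_\omega^{n,\theta} v_\omega^0 \;=\; \lambda_n^\theta(\omega)\, \phi_\omega^\theta(v_\omega^0)\, v_{\sigma^n\omega}^\theta \;+\; R_n^\theta(\omega),
\end{equation*}
where $\frac{1}{n}\log|\lambda_n^\theta(\omega)|\to \Lambda(\theta)$ and $R_n^\theta(\omega)$ decays in the adapted norm at a strictly faster exponential rate along the complementary Oseledets cocycle. Substituting $\theta=it/\sqrt n$ and integrating against $dm$, a second-order Taylor expansion of $\Lambda$ gives $n\Lambda(it/\sqrt n)=-\tfrac{1}{2}t^2\Sigma^2+o(1)$, so the leading factor tends to $e^{-t^2\Sigma^2/2}$; continuity in $\theta$ of the Oseledets generators at $\theta=0$ yields $\phi_\omega^{it/\sqrt n}(v_\omega^0)\to 1$ and $\int v_{\sigma^n\omega}^{it/\sqrt n}\, dm\to 1$.

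The main obstacle is to justify these last two convergences, and to establish the negligibility of $\int R_n^{it/\sqrt n}(\omega)\, dm$, along the orbit $\{\sigma^n\omega\}_{n\ge 0}$ rather than at a fixed base point. Because the adapted norms depend on $\omega$ through the tempered random variable $K$, the implicit constants appearing in both the Oseledets decomposition and the analyticity estimates of $\theta\mapsto(v^\theta_\omega,\phi^\theta_\omega,\lambda_n^\theta(\omega))$ fluctuate along the orbit, but only subexponentially. The delicate point is to verify that these tempered fluctuations are dominated on the one hand by the spectral gap between the first and second Lyapunov exponents of $(\L_\omega^\theta)$, and on the other hand by the $O(1/\sqrt n)$ perturbation of the twist around $0$. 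This bookkeeping, enabled by temperedness of $K$ and of the Oseledets regularity functions, is precisely what allows the classical argument of \cite{D} to go through in the non-uniform setting; once it is completed, L\'evy's continuity theorem produces the claimed quenched Gaussian limit.
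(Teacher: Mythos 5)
Your outline follows the classical [D]-style route: expand $\L_\omega^{n,it/\sqrt n}v_\omega^0$ along the Oseledets splitting of the twisted cocycle, writing $\L_\omega^{n,\theta}v^0_\omega = \lambda_n^\theta(\omega)\phi_\omega^\theta(v^0_\omega)v_{\sigma^n\omega}^\theta + R_n^\theta(\omega)$ and then controlling the three pieces separately. This is a workable skeleton, but you have correctly diagnosed the danger point — the behaviour of $\phi_\omega^\theta(v_\omega^0)$, $\int v^\theta_{\sigma^n\omega}\,dm$ and $R_n^\theta(\omega)$ \emph{along the orbit $\{\sigma^n\omega\}$} — and then left it unresolved. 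You gesture at ``temperedness of $K$'' dominating the fluctuations, but that is not yet a proof: the remainder $R_n^\theta(\omega)$ must be shown to vanish \emph{uniformly} over the range $\theta=it/\sqrt n$, not just for a single fixed $\theta$. That requires a uniform-in-$\theta$, uniform-in-$\omega$ contraction estimate of the form given by the paper's Lemma~\ref{444} (more precisely~\eqref{deC1}--\eqref{deC2}), which is established only for the LCLT in the paper and which you neither invoke nor prove. As written, your argument therefore has a genuine gap precisely at the step you flagged as ``the main obstacle.''

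The paper sidesteps this obstacle entirely by using a different initial decomposition (see Remark after the CLT proof, where the authors note they avoid the analogue of~\cite[Lemma 4.4]{D}). Instead of projecting $v_\omega^0$ onto the twisted Oseledets splitting, they write
\begin{equation*}
\int_X \L_\omega^{it_n,n}v^0_\omega\,dm = \int_X \L_\omega^{it_n,n}(v_\omega^0 - v_\omega^{it_n})\,dm + \prod_{j=0}^{n-1}\lambda^{it_n}_{\sigma^j\omega},
\end{equation*}
exploiting that $v^\theta$ is the top equivariant generator and $\int v^\theta_\omega\,dm=1$ \emph{by construction} (since $v^\theta=v^0+w^\theta$ with $w^\theta\in\mathcal S_0$) — so your $\int v^{it/\sqrt n}_{\sigma^n\omega}\,dm\to 1$ actually holds with exact equality and no limit is needed. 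For the correction term, identity~\eqref{tbs} gives $\int_X \L_\omega^{it_n,n}(v_\omega^0 - v_\omega^{it_n})\,dm = \int_X e^{it_nS_n\psi(\omega,\cdot)}(v_\omega^0-v_\omega^{it_n})\,dm$, whose modulus is then bounded by $\|v^0-v^{it_n}\|_{\mathcal S}$ via~\eqref{an1}, and this tends to $0$ by analyticity of $\theta\mapsto v^\theta\in\mathcal S$ (Theorem~\ref{thm:geneigenvec}), \emph{uniformly in $\omega$}. This is cleaner than your route: no remainder term $R_n^\theta$ to estimate, no $\phi_\omega^\theta(v^0_\omega)\to 1$ to check, and the along-the-orbit bookkeeping that worried you is wholly avoided. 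Both routes then converge on the same final step: a second-order Taylor expansion of $\theta\mapsto\log\lambda_\omega^\theta$ in $L^\infty(\Omega)$ together with the Birkhoff ergodic theorem to identify $\Sigma^2$, which you sketch correctly.
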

	We then prove a Large Deviations Theorem:
	\begin{theorem}[Large Deviations Theorem]\label{thm:LDT}
		Under the previous assumptions, there exists $\epsilon_0>0$ and a non-negative, continuous, strictly convex function $c:(-\epsilon_0,\epsilon_0)\to\R$, vanishing only at $0$ and such that for any $0<\epsilon<\epsilon_0$, and $\mathds P$-a.e $\omega\in\Omega$, 
		\begin{equation}
			\lim_{n\to\infty}\frac{1}{n}\log\mu_\omega\left(S_n\psi_K(\omega,\cdot)>n\epsilon\right)=-c(\epsilon)
		\end{equation}  
	\end{theorem}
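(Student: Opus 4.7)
The plan is to follow the Gärtner--Ellis route adapted to the non-autonomous setting, analyzing the twisted transfer operator cocycle $(\L_\omega^\theta)$ associated with the scaled observable $\psi_K$. The key identity, obtained by iterating the definition of $\L_\omega^\theta$ together with the defining property of the transfer operator, reads
\[
\int_X e^{\theta S_n\psi_K(\omega,x)}\,d\mu_\omega(x)=\int_X (\L_\omega^\theta)^n v_\omega^0\,dm,
\]
valid for every real $\theta$. Theorem \ref{thm:LDT} therefore reduces to a sharp asymptotic for the growth of the twisted cocycle applied to the equivariant density $v_\omega^0$, together with good dependence of that growth on $\theta$ near $0$.

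The first step is to establish the spectral-theoretic input. On the adapted Banach space constructed in the body of the paper, I would show that $(\L_\omega^\theta)$ satisfies the hypotheses of the multiplicative ergodic theorem of \cite{FLQ2} for every real $\theta$ in a neighborhood $(-\theta_0,\theta_0)$ of $0$, that its top Lyapunov exponent $\Lambda(\theta)$ is simple and separated from the rest of the spectrum, and that $\theta\mapsto\Lambda(\theta)$ is real-analytic there. This is essentially the same analytic perturbation ingredient that drives Theorem \ref{thm:CLT}: one applies the analytic implicit function theorem around $\theta=0$, where $\Lambda(0)=0$ is realized by the equivariant family $v_\omega^0$. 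The centering condition \eqref{center} gives $\Lambda'(0)=0$, and an elementary computation identifies $\Lambda''(0)$ with $\Sigma^2$; in the non-degenerate case this forces $\Lambda$ to be strictly convex on some neighborhood of $0$.

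Given this picture, the multiplicative ergodic theorem implies that for $\mathds P$-a.e.\ $\omega$ and each real $\theta\in(-\theta_0,\theta_0)$,
\[
\lim_{n\to\infty}\frac{1}{n}\log\int_X (\L_\omega^\theta)^n v_\omega^0\,dm=\Lambda(\theta).
\]
To extract the top rate from the integral against $dm$ one pairs with the constant function $1\in L^\infty$ and uses positivity of $v_\omega^0$ together with positivity of the dual top Oseledets generator, controlled by the tempered bounds built into the adapted norms. A Chebyshev inequality then yields the upper bound
\[
\limsup_{n\to\infty}\frac{1}{n}\log\mu_\omega\bigl(S_n\psi_K(\omega,\cdot)>n\epsilon\bigr)\le -c(\epsilon),
\]
where $c$ is the Legendre transform of $\Lambda$ restricted to $(-\theta_0,\theta_0)$; the analytic structure just described ensures that $c$ is non-negative, continuous, strictly convex and vanishes only at $0$ on some interval $(-\epsilon_0,\epsilon_0)$.

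The matching lower bound is the main technical obstacle, and is proved by an exponential tilting (change-of-measure) argument. For each small $\epsilon>0$ one selects the unique $\theta=\theta(\epsilon)\in(0,\theta_0)$ solving $\Lambda'(\theta)=\epsilon$, and uses the top Oseledets direction $v_\omega^\theta$ of $(\L_\omega^\theta)$ together with its dual eigenfamily to define a tilted family of absolutely continuous probability measures on $X$, under which $S_n\psi_K(\omega,\cdot)/n$ concentrates at $\epsilon$. Tracking the Radon--Nikodym derivative between the tilted and the original measures, whose exponential cost is exactly $\Lambda(\theta)-\theta\epsilon$ up to sub-exponential errors, produces the desired lower bound. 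The delicate point throughout is the non-uniformity in $\omega$: the spectral data $v_\omega^\theta$, its dual, and the spectral-gap constants controlling the sub-exponential errors all depend on $\omega$ in a merely tempered fashion, and it is precisely for this reason that the scaling by $1/K^2(\omega)$ in the definition of $\psi_K$ was introduced, to ensure that these tempered factors can be absorbed uniformly in the final estimates.
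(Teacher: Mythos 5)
Your proposal correctly identifies the essential spectral ingredients (quasi-compactness of the twisted cocycle, analyticity of $\Lambda$ near $0$, $\Lambda'(0)=0$, $\Lambda''(0)=\Sigma^2$ and hence strict convexity near $0$), and the paper does indeed go through the Gärtner--Ellis mechanism. But the paper treats Gärtner--Ellis as a black box: it invokes the Hennion--Hervé formulation (Theorem \ref{GET}), which yields the full large-deviations conclusion once one verifies that $\frac{1}{n}\log\int_X e^{\theta S_n\psi_K(\omega,\cdot)}\,d\mu_\omega$ converges to a strictly convex $C^1$ function vanishing (together with its derivative) at $0$. You instead re-derive Gärtner--Ellis from scratch, with a Chebyshev upper bound and an exponential-tilting lower bound. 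This is not wrong, but it is a substantial detour: the tilting step requires building $\omega$-dependent tilted measures, tracking Radon--Nikodym derivatives, and establishing a weak law of large numbers under the tilted dynamics, all of which must be done while absorbing the tempered, $\omega$-dependent constants. None of that is needed when Theorem \ref{GET} is applied directly, for each $\omega$ in a full-measure set, with $\mathbb{P}_n=\mu_\omega$, $S_n=S_n\psi_K(\omega,\cdot)$, and $\psi=\Lambda$.

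There is also a point where your argument, as written, has a genuine gap. You claim that the multiplicative ergodic theorem ``implies'' $\lim_n\frac{1}{n}\log\int_X\L_\omega^{\theta,n}v^0_\omega\,dm=\Lambda(\theta)$, but the MET only controls the growth of $\|\L_\omega^{\theta,n}v^0_\omega\|$ in norm; it does not by itself prevent the particular linear functional $\phi\mapsto\int_X\phi\,dm$ from seeing only a sub-leading part of $\L_\omega^{\theta,n}v^0_\omega$. The paper closes this gap with Lemma \ref{lemma:growthrate}: one decomposes $v^0_\omega=\phi^\theta_\omega(v^0_\omega)\,v^\theta_\omega+h^\theta_\omega$ along the twisted Oseledets splitting, observes that the coefficient $\phi^\theta_\omega(v^0_\omega)$ is nonzero for $\theta$ near $0$ because it equals $1$ at $\theta=0$ and depends analytically on $\theta$, and concludes that the leading component drives the asymptotics. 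Your appeal to positivity of $v^0_\omega$ and of the dual eigenfamily is a plausible substitute for real $\theta$ (Perron--Frobenius positivity survives real twisting), but it is not spelled out and would still require showing the positive pairing is bounded away from zero along the orbit; the duality/analyticity argument is cleaner and works for complex $\theta$ as well, which matters elsewhere in the paper. You should either prove a version of Lemma \ref{lemma:growthrate} or make the positivity argument precise.
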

	Finally, we establish the quenched Local Central Limit Theorem, under an aperiodicity condition:
	\begin{theorem}[Local Central Limit Theorem]\label{thm:LCLT}
		Under the previous assumptions, and if the \emph{aperiodicity condition} \eqref{ap} is satisfied, then for $\mathds P$-a.e $\omega\in\Omega$, every bounded interval $J\subset\R$, one has:
		\begin{equation}
			\lim_{n\to\infty}\sup_{s\in\R}\left|\Sigma\sqrt n\mu_\omega(s+S_n\psi_K(\omega,\cdot)\in J)-\frac{|J|}{\sqrt 2\pi}e^{-\frac{s^2}{2n\Sigma^2}} \right|=0.
		\end{equation}
	\end{theorem}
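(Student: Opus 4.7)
The plan is to adapt the Fourier-analytic proof of the LCLT due to Nagaev and Guivarc'h to the quenched non-uniformly expanding setting, relying on the spectral framework for the twisted transfer cocycle associated to the scaled observable $\psi_K$ with adapted norms that underlies Theorem \ref{thm:CLT}. First, I would sandwich $\mathbf{1}_J$ between two functions $\phi_J^{\pm}\in L^1(\R)$ whose Fourier transforms $\hat\phi_J^{\pm}$ are compactly supported in $[-T,T]$ and satisfy $\|\phi_J^{\pm}\|_{L^1}\to|J|$ as $T\to\infty$. By Fourier inversion together with the standard identity linking the characteristic function of $S_n\psi_K(\omega,\cdot)$ under $\mu_\omega$ to the integrated $n$-th iterate of the twisted cocycle at the purely imaginary parameter $it$, the theorem reduces to estimating
\begin{equation*}
I_n^{\pm}(s):=\frac{\Sigma\sqrt n}{2\pi}\int_{-T}^{T}\hat\phi_J^{\pm}(t)\,e^{-its}\int_X e^{itS_n\psi_K(\omega,x)}\,d\mu_\omega(x)\,dt
\end{equation*}
uniformly in $s\in\R$, and showing the appropriate Gaussian limit.

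Next, I would split the $t$-integral into three regimes. For small $|t|\le\delta$, the perturbation theory near $t=0$ already developed for Theorem \ref{thm:CLT} provides a spectral decomposition of the twisted cocycle with smooth top Lyapunov multiplier $\lambda_\omega(it)$ whose product along the orbit satisfies $\prod_{k=0}^{n-1}\lambda_{\sigma^k\omega}(it)\sim e^{-n\Sigma^2 t^2/2}$, with an exponentially decaying hyperbolic remainder controlled in the adapted norm. The change of variables $u=t\sqrt{n}$ combined with dominated convergence then extracts precisely the Gaussian density $\frac{|J|}{\sqrt{2\pi}}e^{-s^2/(2n\Sigma^2)}$, uniformly in $s$ on compact sets; a standard tail estimate exploiting Gaussian decay on both sides upgrades this to uniformity on all of $\R$. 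For intermediate $|t|\in[\delta,T]$, the aperiodicity condition \eqref{ap} combined with continuity in $t$ of the top Lyapunov exponent $\Lambda(it)$ of the twisted cocycle yields $\sup_{\delta\le|t|\le T}\Lambda(it)=-\kappa<0$, whence the inner integral decays like $e^{-\kappa n/2}$; multiplied by $\Sigma\sqrt n$ this contribution is $o(1)$.

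The main technical difficulty lies in this intermediate regime: passing from pointwise-in-$t$ exponential decay to a uniform-in-$t$ bound on the compact set $\{\delta\le|t|\le T\}$ along a \emph{single} full-measure event in $\omega$. In the deterministic setting this follows immediately from spectral-radius continuity, but in our quenched framework the relevant exponent is an Oseledets top exponent depending on $\omega$, and the adapted norms themselves depend on $\omega$ through the tempered random variable $K$. I would handle this via a two-step approach: (i) establish joint continuity in $t$ of the relevant constants via perturbation arguments around each $t_0\in[\delta,T]$, and (ii) extract a finite sub-cover of $[\delta,T]$ and intersect the associated full-measure events to convert local bounds into a uniform one. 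The delicate point is ensuring that the constants entering these estimates ultimately depend only on $K(\omega)$ and not on $t$.

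Assembling the three regimes and passing $T\to\infty$ in the smoothing step then yields the claimed uniform-in-$s$ convergence. Apart from the uniform control in the intermediate regime, the remaining steps follow familiar templates from \cite{D} and~\cite{G}, tailored to the adapted-norm construction that is the central novelty of the paper.
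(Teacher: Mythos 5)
Your overall strategy is the same as the paper's: reduce to test functions $h\in L^1(\R)$ with compactly supported Fourier transform (following \cite{RE83}), use the identity $\int_X e^{itS_n\psi(\omega,\cdot)}d\mu_\omega=\int_X\L_\omega^{it,n}v_\omega^0\,dm$, decompose the frequency integral into a small-$t$ regime handled by the perturbation theory near $\theta=0$ (the product of the $\lambda_\omega^{it_n}$ plus an exponentially decaying remainder, i.e.\ the analogues of Lemma~\ref{ProdLam} and Lemma~\ref{444}), an intermediate regime, and a Gaussian tail handled by dominated convergence. Up to bookkeeping (the paper splits into five terms $(I)$--$(V)$ and imports the estimates of \cite[Theorem C]{D}), that part of your plan matches the paper.

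The point of divergence is the intermediate regime, and there your proposal contains a step that would not go through as described. You treat as the ``main technical difficulty'' the upgrade from pointwise-in-$t$ decay to a uniform bound on $\{\delta\le|t|\le T\}$, and you propose to obtain it from ``continuity in $t$ of the top Lyapunov exponent $\Lambda(it)$'' plus a finite subcover. Two problems. First, this is not needed: the aperiodicity hypothesis \eqref{ap} is \emph{already} formulated as a uniform bound $\|\L_\omega^{it,n}\|_{BV}\le C(\omega)\rho^n$ valid for all $t$ in any fixed compact subset of $\R\setminus\{0\}$ and all $n\ge 0$, on a single full-measure set of $\omega$; the paper's term $(IV)$ is disposed of by citing \eqref{ap} directly, with no continuity or covering argument. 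Second, if one actually had to run your argument (say, starting from a merely pointwise aperiodicity assumption), it would be a genuine gap: the paper establishes regularity of $\Lambda(\theta)$ only in a neighbourhood of $\theta=0$; for $t$ bounded away from $0$ the twisted cocycle need not be quasi-compact with a simple top exponent, Lyapunov exponents are in general only upper semicontinuous in the parameter, and even granting $\Lambda(it)<0$ for each fixed $t$, the a.s.\ statement $\frac1n\log\|\L_\omega^{it,n}\|\to\Lambda(it)$ produces $t$-dependent null sets and $t$-dependent random constants, so extracting a single-$\omega$ bound $C(\omega)\rho^n$ uniform over the compact $t$-interval is exactly the nontrivial content that \eqref{ap} is designed to assume away. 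The fix is simply to invoke \eqref{ap} as stated (with the exponent $\rho$ and the random constant $C(\omega)$ it provides) to kill the term $\Sigma\sqrt n\int_{\tilde\delta\le|t|<\delta}$; with that replacement your outline coincides with the paper's proof.
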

	We emphasize that the above results generalize those in~\cite{D}, where $K$ is a constant random variable (see Remark~\ref{r0}) and consequently one can take $\psi_K=\psi$. Let us also emphasize that, as the example discussed in~\cite[Appendix]{DHS} illustrate, for observables satisfying only condition~\eqref{ftr} the asymptotic variance \eqref{variance} may fail to exist: this shows that our `scale restriction' \eqref{scale} is necessary, and that renormalized limit theorems such as Theorems~\ref{thm:CLT}, \ref{thm:LDT} and~\ref{thm:LCLT} are the best one can hope for in general. 
 	We also note that examples of observables for which our limit theorems hold may be constructed without the above scaling procedure (see Example~\ref{EX} for details).
	\medskip

	 Finally, we would like to end this introduction by noting that our methods can be used to obtain other type of limit theorems (e.g. Berry-Esseen estimates), but to keep the length of this paper within reasonable bounds, we refrain to give a complete list of those (for the specific instance of Berry-Esseen estimate in our setting, we may argue exactly as in \cite[Section 4.4]{DH}). In fact, among the `classical' limit theorems, only the almost sure invariance principle required a  non-trivial addition to our approach, which was carried out in~\cite{DHS}.
	\section{Preliminaries}
	
	\subsection{Multiplicative ergodic theorem}
	
	We begin by recalling the notion of a (linear) cocycle.
	
	\begin{definition}
		A tuple $\mathcal{R}=(\Omega, \mathcal{F}, \mathbb P, \sigma, \mathcal B, \L)$ is said to be a \emph{linear cocycle} or simply a \emph{cocycle} if the following conditions hold:
		\begin{itemize}
			\item $(\Omega,\mathcal F,\mathbb P)$ is a probability space and  $\sigma \colon \Omega \to \Omega$ is an invertible ergodic measure-preserving transformation;
			\item $(\mathcal B, \|\cdot \|)$ is a Banach space and
			$\mathcal L\colon \Omega\to L(\mathcal B)$ is a family of bounded linear operators.
		\end{itemize}
		We say that $\mathcal L$ is the \emph{generator} of $\mathcal R$.
	\end{definition}
	
	\noindent We will often identify a cocycle $\mathcal R=(\Omega, \mathcal{F}, \mathbb P, \sigma, \mathcal B, \L)$ with its generator $\mathcal L$.
	Moreover, we will write $\mathcal L_\omega$ instead of $\mathcal L(\omega)$.

	\begin{definition}
		Let $\mathcal{R}=(\Omega, \mathcal{F}, \mathbb P, \sigma, \mathcal B, \L)$  be a cocycle. We say that $\mathcal R$ is $\mathbb P$-continuous if the following holds:
		\begin{itemize}
			\item $\Omega$ is a Borel subset of a separable, complete metric space and $\sigma$ is a homeomorphism;
			\item $\mathcal L$  is continuous on each of countably many Borel sets whose union is $\Omega$.
		\end{itemize}
	\end{definition}

	Let $\mathcal{R}=(\Omega, \mathcal{F}, \mathbb P, \sigma, \mathcal B, \L)$ be a $\mathbb P$-continuous cocycle such that
	\begin{equation}\label{int}
		\int_\Omega \log^+ \|\L_\omega \|\, d\mathbb P(\omega) <+\infty.
	\end{equation}
	For $\omega \in \Omega$ and $n\in \N$, set
	\[
	\L_\omega^n=\L_{\sigma^{n-1} \omega} \circ \ldots \circ  \L_{\sigma \omega} \circ \L_\omega.
	\]
	It follows from Kingman's subadditive ergodic theorem that, for $\mathbb P$-a.e. $\omega \in \Omega$, the following limits exists:
	\[
	\Lambda(\mathcal{R}) = \lim_{n\to \infty} \frac1n\log \| \L_\omega^n \|
	\]
	and 
	\[
	\kappa(\mathcal{R}) =  \lim_{n\to \infty} \frac1n\log ic( \L_\omega^n),
	\]
	where \[\text{ic}(A):=\inf\Big\{r>0 : \ A(B_{\mathcal B})~\text{admits a finite covering by balls of radius }r \Big\},\] $B_{\mathcal B}$ is the unit ball of $\mathcal B$, and 
	\[-\infty \le \kappa(\mathcal R) \le \Lambda (\mathcal R)<+\infty \]

	\begin{definition}
		We say that  $\Lambda (\mathcal R)$ is the \emph{top Lyapunov exponent} of a cocycle $\mathcal R$, and that $\kappa(\mathcal R)$ is the  \emph{index of the compactness} of $\mathcal R$.
		
	\end{definition}
	
	\begin{definition}
		Let $\mathcal{R}=(\Omega, \mathcal{F}, \mathds P, \sigma, \mathcal B, \L)$ be a $\mathds P$-continuous cocycle such that \eqref{int} holds. We say that $\mathcal R$ is \emph{quasi-compact} if $\kappa (\mathcal R)<\Lambda (\mathcal R)$.
	\end{definition}
	The following result gives sufficient conditions under which a cocycle is quasi-compact.
	\begin{lemma}\label{QC}
		Let $\mathcal{R}=(\Omega, \mathcal{F}, \mathds P, \sigma, \mathcal B, \L)$ be a $\mathds P$-continuous cocycle such that \eqref{int} holds. Furthermore, let  $(\mathcal B', |\cdot|)$ be a Banach space such that $\mathcal B\subset \mathcal B'$ and that the inclusion
		$(\mathcal B, \|\cdot\|) \hookrightarrow (\mathcal B',|\cdot|)$ is compact. Finally, assume the following:
		\begin{itemize}
			\item $\L_\omega$ can be extended continuously to $(\mathcal B', |\cdot|)$ for $\mathbb P$-a.e. $\omega \in \Omega$;
			\item there exist $N\in \N$ and  measurable functions $\alpha_\omega, \beta_\omega, \gamma_\omega: \Omega \to \R$ such that for $f\in \mathcal B$ and $\mathbb P$-a.e. $\omega \in \Omega$,
			\begin{equation}\label{1}
				\| \L_\omega^N f\| \leq \alpha_\omega \|f\| + \beta_\omega |f|
			\end{equation}
			and
			\begin{equation}\label{2}
				\| \L_\omega \|  \leq \gamma_\omega;
			\end{equation}
			\item we have that 
			\begin{equation}\label{3}
				\int_\Omega \log \alpha_\omega \, d\mathbb P(\omega) < N\Lambda(\mathcal {R}) \text{ and } \int_\Omega \log \gamma_\omega \, d\mathbb P(\omega)<\infty.
			\end{equation}
		\end{itemize}
		Then, \[\kappa(\mathcal R) \le \frac 1 N\int_\Omega \log \alpha_\omega \, d\mathbb P(\omega). \]  In particular, 
		$\mathcal R$ is quasi-compact. 
	\end{lemma}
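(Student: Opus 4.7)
The plan is to extract from the Lasota-Yorke inequality \eqref{1} an iterated version for $\L_\omega^{kN}$ in which the coefficient in front of $\|f\|$ is a \emph{pure product} of the $\alpha$'s, and then to combine it with the compactness of the embedding $\B\hookrightarrow \B'$ to control $\ic(\L_\omega^{kN})$. Kingman's subadditive ergodic theorem applied along the subsequence $n=kN$, together with Birkhoff's theorem for $\sigma^N$, will then yield the announced bound on $\kappa(\mathcal R)$. Throughout one may assume without loss of generality that $|\cdot|\le \|\cdot\|$ on $\B$ by rescaling the $\B'$-norm.

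First I will prove, by induction on $k$, that with $A_k(\omega):=\prod_{j=0}^{k-1}\alpha_{\sigma^{jN}\omega}$ there exists a measurable $B_k\colon \Omega\to[0,+\infty)$ such that
\[\|\L_\omega^{kN} f\|\le A_k(\omega)\|f\|+B_k(\omega)|f|\]
for $\mathds P$-a.e.\ $\omega$ and all $f\in\B$. The base case is \eqref{1}. For the inductive step, apply \eqref{1} to $\L_\omega^{kN}f$ acted upon by $\L_{\sigma^{kN}\omega}^N$: the $\|\cdot\|$-term is controlled by the inductive hypothesis, while the $|\cdot|$-term is bounded by $C_k(\omega)|f|$, where $C_k(\omega)$ is the operator norm of $\L_\omega^{kN}$ on $\B'$---$\mathds P$-a.e.\ finite because $\L_\omega$ extends continuously to $\B'$. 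Crucially, no integrability of $C_k$ is required: it merely provides a measurable coefficient in front of $|f|$. The resulting recursion $A_{k+1}=\alpha_{\sigma^{kN}\omega}A_k$ is a pure product, as desired.

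To bound $\ic(\L_\omega^{kN})$, I exploit the compact embedding: for every $\varepsilon>0$ there is a finite family $\{g_1,\ldots,g_M\}\subset B_\B$ such that every $f\in B_\B$ lies within $|\cdot|$-distance $\varepsilon$ of some $g_i$. Applying the iterated inequality to $f-g_i$ (with $\|f-g_i\|\le 2$) gives $\|\L_\omega^{kN}(f-g_i)\|\le 2A_k(\omega)+\varepsilon B_k(\omega)$, so $\L_\omega^{kN}(B_\B)$ is covered by $M$ balls in $\B$ of this radius; letting $\varepsilon\to 0$ yields $\ic(\L_\omega^{kN})\le 2A_k(\omega)$.

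Finally, $\log \ic(\L_\omega^n)$ is subadditive (since $\ic(AB)\le \ic(A)\ic(B)$), so Kingman's theorem together with the ergodicity of $\sigma$ implies that $\kappa(\mathcal R)=\lim_n n^{-1}\log\ic(\L_\omega^n)$ is $\mathds P$-a.s.\ constant. Restricting to $n=kN$ and combining with the bound above,
\[\kappa(\mathcal R)\le \liminf_{k\to\infty}\frac{1}{kN}\left(\log 2+\sum_{j=0}^{k-1}\log\alpha_{\sigma^{jN}\omega}\right),\]
and the right-hand side converges $\mathds P$-a.e.\ by Birkhoff's theorem applied to the measure-preserving (but possibly non-ergodic) map $\sigma^N$. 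Integrating over $\omega$ and using the constancy of $\kappa(\mathcal R)$ gives the desired bound $\kappa(\mathcal R)\le \tfrac{1}{N}\int\log\alpha_\omega\,d\mathds P$, which by \eqref{3} is strictly less than $\Lambda(\mathcal R)$---quasi-compactness follows. The main obstacle is producing the pure product in the inductive step: a naive bound $|\L_\omega^{kN}f|\le\|\L_\omega^{kN}f\|$ would force the coefficient to degrade to $\prod_j(\alpha_j+\beta_j)$, yielding only $\tfrac{1}{N}\int\log(\alpha_\omega+\beta_\omega)\,d\mathds P$, which is insufficient under \eqref{3}; the continuous $\B'$-extension of $\L_\omega$ is precisely what circumvents this.
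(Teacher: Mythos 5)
Your proof is correct, and it is genuinely different from the paper's. The paper handles the general $N$ by a reduction trick: it forms the $N$-step cocycle $\mathcal R^N$ over $\sigma^N$ with generator $\bar{\L}_\omega=\L_\omega^N$, observes that $\kappa(\mathcal R^N)=N\kappa(\mathcal R)$ and $\Lambda(\mathcal R^N)=N\Lambda(\mathcal R)$, and then simply cites the $N=1$ case from~\cite[Lemma~2.1]{D}. You instead give a self-contained argument: you iterate the Lasota--Yorke inequality~\eqref{1} directly along the orbit $\omega,\sigma^N\omega,\sigma^{2N}\omega,\ldots$, keeping the coefficient on $\|f\|$ a pure product $A_k(\omega)=\prod_{j<k}\alpha_{\sigma^{jN}\omega}$ by routing the weak-norm term through the continuous extension on $\B'$ (correctly identifying that $|\L_\omega^{kN}f|\le\|\L_\omega^{kN}f\|$ would ruin the estimate); then you translate this into the bound $\ic(\L_\omega^{kN})\le 2A_k(\omega)$ via the compact embedding, and close with Kingman along the subsequence $n=kN$, Birkhoff for $\sigma^N$, and an integration to dispose of the possible non-ergodicity of $\sigma^N$. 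What the paper's route buys is brevity (one line plus a citation); what yours buys is transparency, since it exposes the mechanism inside the cited lemma and makes the role of each hypothesis---the compact embedding, the $\B'$-extension, the two integrability conditions in~\eqref{3}---visible in a single chain of estimates. One minor remark: $\ic(\L_\omega^{kN})\le 2A_k(\omega)$ is slightly wasteful (the factor $2$ comes from $\|f-g_i\|\le 2$); replacing the unit ball by a $\delta$-net argument would give $\ic(\L_\omega^{kN})\le A_k(\omega)$, but the factor $2$ is harmless since it vanishes after dividing by $kN$ and letting $k\to\infty$, exactly as you use it.
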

	
	\begin{proof}
		If $N=1$, the desired conclusion follows from~\cite[Lemma 2.1.]{D}. In the general case, we consider the cocycle $\mathcal R^N=(\Omega, \mathcal{F}, \mathbb P, \sigma^N, \mathcal B, \bar{\L})$ whose generator $\bar{\L}$ is given by 
		$\bar{\L}_\omega:=\L_\omega^N$, $\omega \in \Omega$. It is easy to show that \begin{equation}\label{KL} \kappa (\mathcal R^N)=N\kappa (\mathcal R) \quad  \text{and} \quad \Lambda (\mathcal R^N)=N\Lambda (\mathcal R).\end{equation} It follows from our assumptions and~\cite[Lemma 2.1.]{D} that \par\noindent $\kappa(\mathcal R^N)\le \int_\Omega \log \alpha_\omega \, d\mathbb P(\omega)$, 
		which together with~\eqref{KL} implies the desired conclusion.
	\end{proof}
	
	We are now in a position to recall the version of the multiplicative ergodic theorem (MET) established in~\cite{FLQ2}.
	\begin{theorem}\label{MET}
		Let $\mathcal R=(\Omega,\mathcal F,\mathbb
		P,\sigma,\mathcal B,\mathcal L)$ be a  quasi-compact $\mathbb P$-continuous cocycle. Then, there exist  $1\le l\le \infty$ and a sequence of exceptional
		Lyapunov exponents
		\[ \Lambda(\mathcal R)=\lambda_1>\lambda_2>\ldots>\lambda_l>\kappa(\mathcal R) \quad \text{(if $1\le l<\infty$)}\]
		or  \[ \Lambda(\mathcal R)=\lambda_1>\lambda_2>\ldots \quad \text{and} \quad \lim_{n\to\infty} \lambda_n=\kappa(\mathcal R) \quad \text{(if $l=\infty$).} \]
		Furthermore,  for $\mathbb P$-a.e. $\omega \in \Omega$ there exists a unique splitting (called the \textit{Oseledets splitting}) of $\mathcal B$ into closed subspaces
		\begin{equation}\label{eq:splitting}
			\mathcal B=V(\omega)\oplus\bigoplus_{j=1}^l Y_j(\omega),
		\end{equation}
		depending measurably on $\omega$ and  such that:
		\begin{enumerate} 
			\item  For each $1\leq j \leq l$, $Y_j(\omega)$ is finite-dimensional (i.e. $m_j:=\dim Y_j(\omega)<\infty$),  $Y_j$ is equivariant i.e. $\L_\omega Y_j(\omega)= Y_j(\sigma\omega)$ and for every $y\in Y_j(\omega)\setminus\{0\}$,
			\[\lim_{n\to\infty}\frac 1n\log\|\mathcal L_\omega^ny\|=\lambda_j.\]
			\item
			$V$ is weakly  equivariant i.e. $\L_\omega V(\omega)\subseteq V(\sigma\omega)$ and
			for every $v\in V(\omega)$, \[\lim_{n\to\infty}\frac 1n\log\|\mathcal
			L_\omega^nv\|\le \kappa(\mathcal R).\]
		\end{enumerate}
	\end{theorem}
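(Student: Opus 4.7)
The plan is to adapt the classical Raghunathan--Ruelle construction of the Oseledets decomposition to the Banach-space setting, exploiting the spectral gap $\kappa(\mathcal R)<\Lambda(\mathcal R)$ provided by quasi-compactness. The existence of the top Lyapunov exponent $\Lambda(\mathcal R)$ and the index of compactness $\kappa(\mathcal R)$ as $\mathbb P$-a.s.\ limits is already in hand via Kingman's subadditive ergodic theorem, so the work concentrates on producing the splitting \eqref{eq:splitting}, establishing its equivariance, and verifying its measurability.

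First I would construct the top Oseledets space $Y_1(\omega)$. For any $\lambda\in(\kappa(\mathcal R),\Lambda(\mathcal R))$, introduce the ``slow subspace''
\[
F_\lambda(\omega):=\Bigl\{v\in\mathcal B:\ \limsup_{n\to\infty}\tfrac{1}{n}\log\|\mathcal L_\omega^n v\|\leq\lambda\Bigr\}.
\]
Quasi-compactness forces $F_\lambda(\omega)$ to have finite codimension: otherwise one could extract an infinite $\lambda$-separated family of directions whose iterates survive compact approximation, contradicting the characterization of $\kappa(\mathcal R)$ via the measure of non-compactness of $\mathcal L_\omega^n$. Combining $\mathbb P$-continuity with a measurable selection theorem of Kuratowski--Ryll-Nardzewski type, one builds a measurable, finite-dimensional, equivariant complement of $F_{\lambda_1-\delta}(\omega)$ inside $\mathcal B$, and isolates $Y_1(\omega)$ as the subspace of pure growth rate $\lambda_1=\Lambda(\mathcal R)$ by applying the finite-dimensional Oseledets theorem fiberwise to the induced matrix cocycle on that complement.

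To iterate, descend to the quotient cocycle on $\mathcal B/Y_1$, whose index of compactness is still $\kappa(\mathcal R)$ but whose top Lyapunov exponent has dropped to some $\lambda_2<\lambda_1$. Rerunning the construction yields $Y_2,\ldots,Y_l$ together with the strictly decreasing sequence $\lambda_2>\lambda_3>\ldots$. The process either terminates (after finitely many steps the residual cocycle has top growth $\leq\kappa(\mathcal R)$, producing $l<\infty$ and a weakly equivariant $V(\omega)$ as the surviving slow part), or produces an infinite sequence $\lambda_n\downarrow\kappa(\mathcal R)$, in which case one sets $V(\omega)=\bigcap_n F_{\lambda_n+\delta_n}(\omega)$. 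Uniqueness of the splitting follows because each $Y_j(\omega)$ is characterized intrinsically as the set of vectors with forward Kingman exponent equal to $\lambda_j$, while $V(\omega)$ is characterized by exponent $\leq\kappa(\mathcal R)$.

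The main obstacle lies in the construction of the equivariant finite-dimensional fast subspace and the proof of its measurability: outside finite dimensions one cannot invoke a singular value decomposition directly, and the compactness needed to extract limits of candidate subspaces must be squeezed entirely out of the gap $\kappa(\mathcal R)<\Lambda(\mathcal R)$. The $\mathbb P$-continuity hypothesis, which exhibits $\Omega$ as a countable union of Borel pieces on which $\mathcal L$ is continuous, is precisely the ingredient that allows the measurable selection step to go through and produces a Borel-measurable choice of $Y_j(\omega)$ pointwise; without it, only an abstract splitting would be available without any hope of measurability in $\omega$.
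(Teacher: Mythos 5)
This theorem is not proved in the paper: it is a recalled result, and its proof is the content of Froyland--Lloyd--Quas \cite{FLQ2} (building on earlier work of Thieullen, Lian--Lu, and Gonz\`alez-Tokman--Quas). The paper's ``proof'' is simply the citation, so the question is whether your sketch is a plausible route to the cited result.

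Your sketch has the right general architecture --- slow subspaces defined by Kingman growth rates, finite codimension forced by the gap $\kappa(\mathcal R)<\Lambda(\mathcal R)$, reduction to a finite-dimensional cocycle on a complement, iteration on quotients, measurability via $\mathbb P$-continuity --- and these are indeed the moving parts of all Banach-space Oseledets theorems. However, the sketch glosses over what is actually the crux of \cite{FLQ2} and the reason that paper exists: in this \emph{semi-invertible} setting $\sigma$ is invertible but the operators $\mathcal L_\omega$ are not, and the conclusion is a genuine splitting $\mathcal B=V(\omega)\oplus\bigoplus_j Y_j(\omega)$ with $\mathcal L_\omega Y_j(\omega)=Y_j(\sigma\omega)$ (an equality, not merely an inclusion), not just the equivariant filtration by the $F_\lambda(\omega)$ that forward Kingman growth rates deliver. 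Your phrase ``one builds a measurable, finite-dimensional, equivariant complement of $F_{\lambda_1-\delta}(\omega)$'' names the goal but supplies no construction; with non-invertible operators there is no canonical complement, and the fast spaces $Y_j(\omega)$ have to be extracted as limits of images $\mathcal L_{\sigma^{-n}\omega}^n(\cdot)$ of suitable subspaces at $\sigma^{-n}\omega$, using the invertibility of the \emph{base} $\sigma$ to look into the past. One then has to prove that these limits exist, are independent of the choices, are equivariant, have the right dimension, and depend measurably on $\omega$. That is where the real work lies, and it is missing from the sketch. Similarly, ``applying the finite-dimensional Oseledets theorem fiberwise to the induced matrix cocycle on that complement'' presupposes the complement is already equivariant, which is what needs proving. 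As a heuristic outline your proposal is consistent with the literature, but it does not yet contain a proof of the step that distinguishes the semi-invertible MET from the (much easier) filtration version, so it cannot be accepted as a proof of the statement.
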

	\subsection{Dual cocycles}
	In the following, we will need to establish `parallel' properties for a cocycle and its dual; hence, we recall in this section some basic facts related to this notion. 
	\par\noindent Given a cocycle $\mathcal R=(\Omega, \mathcal{F}, \mathbb P, \sigma, \mathcal B, \L)$, one defines the \emph{adjoint cocycle} $\mathcal R^*=(\Omega, \mathcal{F}, \mathbb P, \sigma^{-1}, \mathcal B, \L^*)$, where $(\L^*)_\omega=\L_{\sigma^{-1}\omega}^*$, $\omega \in \Omega$. We will write $\L_\omega^*$ for $(\L^*)_\omega$, which is thus the adjoint operator of $\L_{\sigma^{-1}\omega}$. We notice that $\mathcal R^*$ is $\mathds P$-continuous if and only if $\mathcal R$ is.
	\par\noindent Furthermore, it is easy to see that $\Lambda(\mathcal R^*)=\Lambda (\mathcal R)$ and that $\kappa(\mathcal R)=\kappa(\mathcal R^*)$. In particular, $\mathcal R$ is quasi-compact if and only if $\mathcal R^*$ is.
	\par\noindent One also has a natural relation between the Oseledets splitting and Lyapunov exponents of $\mathcal R$ and those of $\mathcal R^*$. Recall that, given a subspace $S\subset\B$, the \emph{annihilator} $S^\circ$ of $S$ is defined by $\{\ell\in\B^*,~\ell(f)=0~\forall~f\in S\}$; similarly, given $S^*\subset\B^*$ we set $(S^*)^\circ:=\{f\in\B,~\ell(f)=0~\forall\ell\in S^*\}.$
	\begin{theorem}[\cite{D}, Cor. 2.5 and Lemma 2.6]\label{thm:dualcocycle}
		Under the assumption of Theorem \ref{MET}, the adjoint cocycle admits a unique measurable equivariant splitting 
		\begin{equation}\label{split*}
			\B^*=V^*(\omega)\oplus \bigoplus_{i=1}^l Y_i^*(\omega),
		\end{equation}
		with the same exponents $(\lambda_i)_{i\in\{1,\dots,l\}}$ and multiplicities $(m_i)_{i\in\{1,\dots,l\}}$ as $\mathcal R$.
		Furthermore, denoting by $H(\omega):=V(\omega)\oplus \bigoplus_{i=2}^l Y_i(\omega)$ \\(resp. $H^*(\omega):=V^*(\omega)\oplus \bigoplus_{i=2}^l Y_i^*(\omega)$), one has
		\begin{equation}
			Y_1^*(\omega)=H(\omega)^\circ~\text{and}\quad Y_1(\omega)=(H^*(\omega))^\circ.
		\end{equation}
	\end{theorem}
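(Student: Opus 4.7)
The plan is to verify that the adjoint cocycle $\mathcal R^*$ satisfies the hypotheses of the multiplicative ergodic theorem (Theorem \ref{MET}), obtain an abstract Oseledets splitting, and then match it explicitly with annihilators of the original splitting via uniqueness.

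First I would check that $\mathcal R^*$ is a $\mathds P$-continuous, quasi-compact cocycle with $\int_\Omega \log^+ \|\L_\omega^*\|\,d\mathds P(\omega)<\infty$. The integrability follows from $\|\L_\omega^*\| = \|\L_{\sigma^{-1}\omega}\|$ together with $\sigma$-invariance of $\mathds P$, and the $\mathds P$-continuity is inherited from that of $\mathcal R$ by considering the same countable Borel partition of $\Omega$ (using continuity of $A \mapsto A^*$ in operator norm). The equalities $\Lambda(\mathcal R^*)=\Lambda(\mathcal R)$ and $\kappa(\mathcal R^*)=\kappa(\mathcal R)$, which the text already records, provide the quasi-compactness. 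Applying Theorem \ref{MET} to $\mathcal R^*$ then yields an abstract splitting $\B^*=\tilde V(\omega)\oplus \bigoplus_{j=1}^{l^*}\tilde Y_j(\omega)$ with exponents $\tilde\lambda_1>\tilde\lambda_2>\dots$

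Next I would show that, for each $j\in\{1,\dots,l\}$, the annihilator $H_j(\omega)^\circ$ of the codimension-$m_j$ closed subspace $H_j(\omega):=V(\omega)\oplus \bigoplus_{i\neq j}Y_i(\omega)$ is a closed, finite-dimensional ($m_j$-dimensional) equivariant subspace of $\mathcal R^*$ with Lyapunov exponent $\lambda_j$. Equivariance comes from $\L_\omega H_j(\omega)\subseteq H_j(\sigma\omega)$, which is immediate from $\L_\omega V(\omega)\subseteq V(\sigma\omega)$ and $\L_\omega Y_i(\omega)=Y_i(\sigma\omega)$; dualizing gives $\L_\omega^* H_j(\omega)^\circ\subseteq H_j(\sigma^{-1}\omega)^\circ$. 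The growth rate is identified by the natural isomorphism $H_j(\omega)^\circ\cong Y_j(\omega)^*$ (obtained by restriction), under which the action of $\L_\omega^*$ becomes the Banach-space adjoint of the finite-dimensional isomorphism $\L_{\sigma^{-1}\omega}|_{Y_j(\sigma^{-1}\omega)}\colon Y_j(\sigma^{-1}\omega)\to Y_j(\omega)$; in finite dimension the adjoint has the same norm, so the exponent equals $\lambda_j$ by applying the existing conclusion of Theorem \ref{MET} to $\mathcal R$ along $\sigma^{-n}\omega$ and invoking ergodicity.

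By uniqueness of the Oseledets decomposition (clause (1) of Theorem \ref{MET} applied to $\mathcal R^*$), the finite-dimensional equivariant subspace $H_j(\omega)^\circ$ with single exponent $\lambda_j$ must coincide with the corresponding $\tilde Y_{j'}(\omega)$, forcing $l^*=l$, $\tilde\lambda_j=\lambda_j$, and $\dim\tilde Y_j(\omega)=m_j$. Setting $Y_j^*(\omega):=H_j(\omega)^\circ$ produces the announced splitting \eqref{split*}, and in particular yields $Y_1^*(\omega)=H(\omega)^\circ$ by taking $j=1$.

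For the reverse annihilator $Y_1(\omega)=(H^*(\omega))^\circ$, the inclusion $H^*(\omega)\subseteq Y_1(\omega)^\circ$ is cheap: every $Y_j^*(\omega)$ with $j\ge 2$ equals $H_j(\omega)^\circ$ and thus annihilates $Y_1(\omega)\subseteq H_j(\omega)$, while $V^*(\omega)$ is characterized (by the same argument applied to the whole finite-dimensional part) as the annihilator of $\bigoplus_{j=1}^l Y_j(\omega)$. Hence $Y_1(\omega)\subseteq (H^*(\omega))^\circ$. For the reverse inclusion, pick $f\in (H^*(\omega))^\circ$ and decompose $f=y+h$ with $y\in Y_1(\omega)$, $h\in H(\omega)$; the separation property of $H^*(\omega)$ on $H(\omega)$, which follows from $\B^*=Y_1^*(\omega)\oplus H^*(\omega)$ together with Hahn--Banach (every functional annihilating $Y_1(\omega)$ belongs to $H^*(\omega)$, so if $h\neq 0$ some such functional does not vanish on $h$, a contradiction), forces $h=0$. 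The main obstacle I anticipate is precisely this final duality step in a non-reflexive Banach space like $BV$: one cannot simply apply step 2 to $(\mathcal R^*)^*=\mathcal R$, so one must extract the annihilator characterization of $V^*(\omega)$ and the separation lemma for $H^*(\omega)$ directly from the splittings at hand, without appealing to reflexivity.
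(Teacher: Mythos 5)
You should first note that the paper does not prove this statement at all: it is imported verbatim from \cite{D} (Cor.~2.5 and Lemma~2.6), so the only meaningful comparison is with that reference. Your overall architecture --- check that $\mathcal R^*$ is a quasi-compact $\mathds P$-continuous cocycle, apply Theorem~\ref{MET} to it, and then match the dual Oseledets blocks with annihilators of the original splitting via equivariance, dimension counts and growth rates --- is indeed the standard route and is in the spirit of the cited proof. However, as written the argument has genuine gaps at exactly the points that carry the content of the result.

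The main gap is the sentence ``by uniqueness of the Oseledets decomposition, $H_j(\omega)^\circ$ must coincide with the corresponding $\widetilde Y_{j'}(\omega)$, forcing $l^*=l$, $\widetilde\lambda_j=\lambda_j$ and $\dim\widetilde Y_j(\omega)=m_j$.'' Clause (1) of Theorem~\ref{MET} only asserts uniqueness of the splitting; it does not say that every finite-dimensional equivariant family with a single exponent is an Oseledets block. A priori $H_j(\omega)^\circ$ could be a proper equivariant subfamily of a block of larger multiplicity, and uniqueness says nothing about whether $\mathcal R^*$ has exceptional exponents outside $\{\lambda_1,\dots,\lambda_l\}$; yet equality of the exponent sets and of the multiplicities is precisely what the theorem claims. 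Filling this requires genuine arguments that are absent from the proposal: an inclusion $H_j(\omega)^\circ\subseteq\widetilde Y_{j'}(\omega)$ via a graph-transform/Poincar\'e-recurrence argument, and the reverse inclusion together with the exclusion of extra exponents via pairing estimates of the type $|\ell(y)|\le\|(\L^*)^n_\omega\ell\|\cdot\|(\L^n_{\sigma^{-n}\omega}|_{Y_j(\sigma^{-n}\omega)})^{-1}y\|\to0$. Two further steps are glossed in a way that matters: the identification $H_j(\omega)^\circ\cong Y_j(\omega)^*$ distorts norms by the $\omega$-dependent norm of the Oseledets projection onto $Y_j(\omega)$, so the exponent computation needs temperedness of these projection norms along the orbit; and since $\mathcal R^*$ is a cocycle over $\sigma^{-1}$, what you actually need is the growth rate of the pullback products $\|\L^n_{\sigma^{-n}\omega}|_{Y_j(\sigma^{-n}\omega)}\|$, which Theorem~\ref{MET} (a forward statement at fixed $\omega$) does not literally provide --- a Kingman-type argument for the reversed restricted cocycle is required. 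Likewise, the characterization of $V^*(\omega)$ as the annihilator of $\bigoplus_j Y_j(\omega)$ is not ``the same argument applied to the finite-dimensional part'': only the inclusion $V^*(\omega)\subseteq Y_1(\omega)^\circ$ is needed for your Hahn--Banach step, and it again rests on the pairing estimate. All of this is standard and fillable, and your final duality discussion (avoiding reflexivity of $BV$) is sound, but the appeal to uniqueness cannot substitute for these estimates, which constitute the actual proof in \cite{D}.
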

	
	\subsection{Cocycles expanding on average}\label{sec:setup}
	We begin by recalling the setup from~\cite{Buzzi}. Let $(X, \mathcal G)$ be a measurable space endowed with a probability measure $m$ and a notion of a variation $\var \colon L^1(X, m) \to [0, \infty]$ which satisfies
	the following conditions:
	\begin{enumerate}
		\item[(V1)] $\var (th)=\lvert t\rvert \var (h)$;
		\item[(V2)] $\var (g+h)\le \var (g)+\var (h)$;
		\item[(V3)] $\lVert h\rVert_{L^\infty} \le C_{\var}(\lVert h\rVert_1+\var (h))$ for some constant $1\le C_{\var}<\infty$;
		\item[(V4)] for any $C>0$, the set  $\{h\colon X \to \mathbb R: \lVert h\rVert_1+\var (h) \le C\}$ is $L^1(m)$-compact;
		\item[(V5)] $\var(1_X) <\infty$, where $1_X$ denotes the function equal to $1$ on $X$;
		\item[(V6)] $\{h \colon X \to \mathbb R_+: \lVert h\rVert_1=1 \ \text{and} \ \var (h)<\infty\}$ is $L^1(m)$-dense in
		$\{h\colon X \to \mathbb R_+: \lVert h\rVert_1=1\}$.
		\item[(V7)] for any $f\in L^1(X, m)$ such that $\essinf f>0$, we have \[\var(1/f) \le \frac{\var (f)}{(\essinf f)^2}.\]
		\item[(V8)] $\var (fg)\le \lVert f\rVert_{L^\infty}\cdot \var(g)+\lVert g\rVert_{L^\infty}\cdot \var(f)$.
		\item[(V9)] for $M>0$, $f\colon X \to [-M, M]$ measurable and  every $C^1$ function $h\colon [-M, M] \to \mathbb C$, we have
		$\var (h\circ f)\le \lVert h'\rVert_{L^\infty} \cdot \var(f)$.
	\end{enumerate} We define
	\[
	BV=BV(X,m)=\{g\in L^1(X, m): \var (g)<\infty \}.
	\]
	Then, $BV$ is a Banach space with respect to the norm
	\[
	\lVert g\rVert_{BV} =\lVert g\rVert_1+ \var (g).
	\]
	\begin{remark}
		In the rest of the paper we will use (V1)-(V9) without explicitly referring to those properties. In particular, we will often use that 
		\[
		\|fg\|_{BV} \le C_{var} \|f\|_{BV} \cdot \|g\|_{BV} \quad \text{for $f, g\in BV$,}
		\]
		which follows readily from (V3) and (V8).
	\end{remark}
	
	\begin{example}\label{ex:var}
		It is well-known (see~\cite[Section 2.2.]{D}) that in the case when $X=[0, 1]$ and $m$ is the Lebesgue measure, $\var \colon L^1(X, m) \to [0, \infty]$ given by 
		\begin{equation}\label{def:1Dvar}
		\var (g)=\inf_{h=g (mod \ m)} \sup_{0=s_0<s_1<\ldots <s_n=1}\sum_{k=1}^n \lvert h(s_k)-h(s_{k-1})\rvert \quad g\in L^1(X, m),
		\end{equation}
		satisfies properties (V1)-(V9). 
		
		In  higher dimensions, when $X\subset \R^n$, $n>1$ and $m$ is again the Lebesgue measure, $\var \colon L^1(X, m) \to [0, \infty]$ defined by 
		\begin{equation}\label{def:HDvar}
		\var (f)=\sup_{0<\epsilon \le \epsilon_0}\frac{1}{\epsilon^\alpha}\int_{\R^d}osc (f,  B_\epsilon (x)))\, dm(x),
		\end{equation}
		where
		\[
		osc (f, B_\epsilon (x))=\esssup_{x_1, x_2 \in B_\epsilon (x)}\lvert f(x_1)-f(x_2)\rvert,
		\]
		also fulfills conditions (V1)-(V9). We refer to~\cite[Section 2.2.]{D} and~\cite{Saussol} for details.
	\end{example}
	Let $(\Omega, \mathcal{F}, \mathds P, \sigma)$ be a probability space and $\sigma \colon \Omega \to \Omega$  an invertible ergodic measure-preserving transformation. Let  $T_{\omega} \colon X \to X$, $\omega \in \Omega$ be a collection of non-singular  transformations (i.e.\ $m\circ T_\omega^{-1}\ll m$ for each $\omega$) acting   on $X$.  Each transformation $T_{\omega}$ induces the corresponding transfer operator $\mathcal L_{\omega}$ acting on $L^1(X, m)$ and  defined  by the following duality relation
	\[
	\int_X(\mathcal L_{\omega} \phi)\psi \, dm=\int_X\phi(\psi \circ T_{\omega})\, dm, \quad \phi \in L^1(X, m), \ \psi \in L^\infty(X, m).
	\]
	Thus, we obtain a cocycle of transfer operators  $(\Omega, \mathcal F, \mathbb P, \sigma, L^1(X, m), \mathcal L)$ that we denote by $\L=(\L_\omega)_{\omega \in \Omega}$.
	\begin{definition}\label{good}
		A cocycle $\L=(\L_\omega)_{\omega \in \Omega}$ of transfer operators is said to be \emph{good} if the following conditions hold:
		\begin{itemize}
			\item $\Omega$ is a Borel subset of a separable, complete metric space and $\sigma$ is a homeomorphism. Moreover, $\L$ is $\mathbb P$-continuous;
			\item there exists a random variable $C\colon \Omega \to (0, +\infty)$ such that $\log C\in L^1(\Omega, \mathbb P)$ and
			\begin{equation}\label{weakLY}
				\|\L_\omega h\|_{BV}\le C(\omega) \|h\|_{BV}, \quad \text{for $\mathbb P$-a.e. $\omega \in \Omega$ and $h\in BV$;}
			\end{equation}
			\item there exist $N\in \N$ and random variables $\alpha^N, K^N \colon \Omega \to (0, +\infty)$ such that 
			\begin{equation}\label{qc}
				\int_\Omega \log  \alpha^N \, d\mathbb P <0, \quad \log K^N \in L^1(\Omega, \mathbb P)
			\end{equation}
			and, for $\mathds P$-a.e. $\omega \in \Omega$ and $h\in BV$,
			\begin{equation}\label{strongLY}
				\var(\L_\omega^N h) \le \alpha^N (\omega) \var(h)+ K^N(\omega) \|h \|_1;
			\end{equation}
			\item for each $a>0$ and $\mathbb P$-a.e. $\omega \in \Omega$, there exist random numbers $n_c(\omega)<+\infty$ and $\alpha_0(\omega), \alpha_1(\omega), \ldots$ such that for every $h\in \mathcal C_a$, 
			\[
			\essinf_x (\L_\omega^n h)(x) \ge \alpha_n\|h \|_1 \quad \text{for $n\ge n_c$,}
			\]
			where 
			\[
			C_a:=\{ h\in L^\infty (X,m): \text{$h\ge 0$ and $\var(h) \le a\|h\|_1$} \}.
			\]
		\end{itemize}
	\end{definition}
	
	\begin{remark}\label{rem:good}
		Let us make a few comments on those assumptions:
		\begin{itemize}
			\item We note that Definition~\ref{good} almost coincides with~\cite[Definition 1.1]{Buzzi}, the only difference being that the first requirement in Definition~\ref{good} is absent in~\cite[Definition 1.1]{Buzzi}. We stress that the first requirement in Definition~\ref{good} ensures that we can apply Theorem~\ref{MET} to any good cocycle of transfer operators.
			\item We have that $\L$ is $\mathbb P$-continuous whenever the map $\omega \mapsto T_\omega$ has a countable range $\{T_1, T_2, \ldots \}$, and for each $j$, $\{\omega \in \Omega: T_\omega=T_j \} \in \mathcal F$ (see~\cite[Section 4.1]{FLQ2}).
			\item Observe that~\eqref{weakLY} implies that $\L_\omega$ is a bounded operator on $BV$ for $\mathbb P$-a.e. $\omega \in \Omega$. Thus, from now on we will view $\L$ as a cocycle acting on $BV$  (and not $L^1(X,m)$).
		\end{itemize}
	\end{remark}
	
	Let us now describe examples of systems having good cocycles of transfer operators:
	\begin{example}[Lasota-Yorke cocycles]\label{ex:good}
		Consider $X=[0,1]$, endowed with Lebesgue measure and the classical notion of variation $\var$ given by \eqref{def:1Dvar}. We say that $T:X\to X$ is a piecewise monotonic non-singular map (p.m.n.s map for short) if the following conditions hold:
		\begin{itemize}
			\item T is piecewise monotonic, i.e. there exists a subdivision $0=a_0<a_1<\dots<a_N=1$ such that for each $i\in\{0,\dots,N-1\}$, the restriction $T_i=T_{|(a_i,a_{i+1})}$ is monotonic (in particular it is a homeomorphism on its image).
			\item T is non-singular, i.e. there exists $|T'|:[0,1]\to\R_+$ such that for any measurable $E\subset (a_i,a_{i+1})$, $m(T(E))=\int_E|T'|dm$.
		\end{itemize}
		The intervals $(a_i,a_{i+1})_{i\in\{0,\dots,N-1\}}$ are called the intervals of $T$. We also set $N(T):=N$ and $\lambda(T):=\essinf_{[0,1]}|T'|$.
		\\We consider a family $(T_\omega)_{\omega\in\Omega}$ of random p.m.n.s as above, and such that $T:\Omega\times [0,1]\to[0,1],~(\omega,x)\mapsto T_\omega(x)$ is measurable.
		Denoting $N_\omega=N(T_\omega)$ and $\lambda_\omega=\lambda(T_\omega)$, we assume that
		\begin{itemize}
			\item The map $\omega\mapsto\left(\var\left(\frac{1}{|T'_\omega|}\right),N_\omega,\lambda_\omega,a_1,\dots,a_{N_\omega-1}\right)$ is measurable.
			\item We have the following expanding-on-average property:
			\begin{equation}\label{hyp:exponaverage}
				\int_\Omega\log\lambda_\omega~d\mathds P(\omega)>0.
			\end{equation}
			\item The map $\log^+\left(\frac{N_\omega}{\lambda_\omega}\right)$ is integrable. 
			\item The map $\log^+\left(\var\left(\frac{1}{|T'|}\right)\right)$ is integrable.
			\item $T_\omega$ is covering, i.e. for any interval $I\subset[0,1]$, there exists a random number $n_c(\omega)>0$ such that for any $n\ge n_c$, one has
			\begin{equation*}
				\essinf_{[0,1]} \L^{(n)}_\omega(\mathds 1_I)>0.
			\end{equation*}
		\end{itemize}
		We will call a cocycle satisfying the previous assumptions an \emph{expanding on average Lasota-Yorke cocycle}. For a countably-valued measurable family $(T_\omega)_{\omega\in\Omega}$ of expanding on average Lasota-Yorke cocycle, Remark \ref{rem:good} and \cite[Section 1.2]{Buzzi} tells us that the associated family of transfer operators is good in the sense of Definition \ref{good}.
	\end{example}
	
	\begin{example}[Smooth expanding on average cocycles] 
		We describe here an example, inspired by Kifer \cite{K1}, of cocycles to which the previous definition apply. Consider $X=\mathbb S^1$ be the unit circle, endowed with the Lebesgue measure $m$ and the notion of variation given by $\var(\phi):=\int_X |\phi'|~dm=\|\phi'\|_{L^1}$.
		We consider a family of maps $(T_\omega)_{\omega\in\Omega}$, such that $\Omega$ is a Borel subset of a separable, complete metric space, endowed with a homeomorphism $\sigma$, and $T:\Omega\times X\to X$ is measurable and $T_\omega:=T(\omega,\cdot)$ is $C^r$, $r\ge 2$. Using the same notations as the previous example (note that here, $N_\omega=1$ a.s), we make the following assumptions on this family:
		\begin{enumerate}
			\item The map $\omega\in\Omega\mapsto \left(\int_X\frac{|T_\omega''|}{(T_\omega')^2} dm,\lambda_\omega\right)$ is measurable, where $\lambda_\omega=\essinf_{[0, 1]}|T_\omega'|$.
			\item The expanding on average property \eqref{hyp:exponaverage} holds.
			\item The map $\log\left(\int_X\frac{|T_\omega''|}{(T_\omega')^2} dm\right)$ is $\mathds P$-integrable.
		\end{enumerate} 
		We call a family $(T_\omega)_{\omega\in\Omega}$ satisfying the previous assumption a \emph{smooth expanding on average cocycle}.
		\par\noindent Let us show that the transfer operator cocycle $(\L_\omega)_{\omega\in\Omega}$ associated to a countably valued, smooth expanding on average cocycle is a good family of transfer operators in the sense of Definition \ref{good}.
		We first observe that the $\mathds P$-continuity property of $\L$ follows, as in the previous example, from Remark \ref{rem:good}. Second, we show that for the norm\footnote{This is of course the classical $(1,1)$ Sobolev norm.} $\|.\|_{W^{1,1}}$ given by $\|\phi\|_{W^{1,1}}:=\|\phi\|_{L^1}+\|\phi'\|_{L^1}$, the Lasota-Yorke inequalities \eqref{weakLY} and \eqref{strongLY} hold, for $N=1$.
		We start by recalling deterministic Lasota-Yorke inequalities: for a $C^2$ map $T\colon  X\to X$, such that $D(T):=\sup\frac{T''}{(T')^2}<\infty$ (note that $D(T)$ is measurable in $T$), the well-known formula 
		\begin{equation}\label{eq:difftransferop}
			(\L_T\phi)'=\L_T\left(\frac{\phi'}{T'}\right)+\L_T\left(\frac{T''}{(T')^2}\phi\right)
		\end{equation} 
		easily implies that (see also \cite[Lemma 29]{GS})
		\begin{equation}\label{eq:detstrongLY}
			\|(L_T\phi)'\|_{L^1}\le \frac{1}{\lambda(T)}\|\phi'\|_{L^1}+D(T)\|\phi\|_{L^1},
		\end{equation}
		where $\lambda (T)=\essinf_{[0,1]}|T'|$. 
		In the spirit of Buzzi \cite[Lemma 1.3]{Buzzi}, one has:
		\begin{lemma}\label{lemma:detweakLY}
			For a $C^2$ map $T:X\circlearrowleft$,
			\begin{equation}
				\int_X|(\L_T\phi)'|~dm\le C_0(T)\|\phi\|_{W^{1,1}}.
			\end{equation}
			with \[C_0(T):= 2\max\left(\frac{1}{\lambda(T)}, \int_X\frac{|T''|}{(T')^2}~dm\right)\]
		\end{lemma}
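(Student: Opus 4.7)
\medskip

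The plan is to start from the decomposition \eqref{eq:difftransferop} of $(\L_T\phi)'$ and use the $L^1$-contractivity of the transfer operator, namely $\int_X|\L_T f|\,dm\le\int_X \L_T|f|\,dm = \int_X|f|\,dm$ (valid since $\L_T$ is a Markov operator sending nonnegative functions to nonnegative functions with preserved mass). Applying this to both terms of \eqref{eq:difftransferop}, I would obtain
\begin{equation*}
\int_X|(\L_T\phi)'|\,dm \le \int_X \left|\frac{\phi'}{T'}\right|dm + \int_X\frac{|T''|}{(T')^2}|\phi|\,dm.
\end{equation*}

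The first term is easily bounded by $\frac{1}{\lambda(T)}\|\phi'\|_{L^1}$ using the definition $\lambda(T)=\essinf|T'|$. The second term is where the real work lies: naively bounding $|\phi|$ pointwise by $\|\phi\|_{L^\infty}$ yields a constant involving $\|\phi\|_{L^\infty}$ rather than $\|\phi\|_{W^{1,1}}$. The key observation to close the estimate is the one-dimensional Sobolev embedding on $\mathbb{S}^1$: for $\phi\in W^{1,1}$, writing $\phi(x)=\phi(y)+\int_y^x\phi'(t)\,dt$ and averaging over $y\in\mathbb{S}^1$ gives
\begin{equation*}
\|\phi\|_{L^\infty}\le \|\phi\|_{L^1}+\|\phi'\|_{L^1}=\|\phi\|_{W^{1,1}}.
\end{equation*}

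Plugging this into the second term yields a bound of $\|\phi\|_{W^{1,1}}\int_X\frac{|T''|}{(T')^2}dm$. Combining both pieces, and using $\|\phi'\|_{L^1}\le\|\phi\|_{W^{1,1}}$ to homogenize, produces
\begin{equation*}
\int_X|(\L_T\phi)'|\,dm \le \left(\frac{1}{\lambda(T)}+\int_X\frac{|T''|}{(T')^2}\,dm\right)\|\phi\|_{W^{1,1}},
\end{equation*}
and the elementary inequality $a+b\le 2\max(a,b)$ delivers the stated constant $C_0(T)$.

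The main obstacle, if one can call it that, is recognizing that the $|\phi|$ factor appearing after contracting $\L_T$ must be handled by a Sobolev-type embedding rather than a direct $L^1$ bound; without this step, one would only get control in terms of $\|\phi\|_{L^\infty}$, which would not yield a $W^{1,1}\to W^{1,1}$ Lasota--Yorke inequality of the form advertised. Everything else is a routine combination of the identity \eqref{eq:difftransferop} with the $L^1$-contractivity of $\L_T$.
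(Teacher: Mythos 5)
Your proof is correct and follows essentially the same route as the paper: both start from \eqref{eq:difftransferop}, use $L^1$-contractivity of $\L_T$ together with $|T'|\ge\lambda(T)$ on the first term, and handle the second term by bounding $\max|\phi|$ via the one-dimensional Sobolev embedding $\|\phi\|_{L^\infty}\le\|\phi\|_{L^1}+\|\phi'\|_{L^1}$ before regrouping to reach $C_0(T)\|\phi\|_{W^{1,1}}$. The only cosmetic difference is that the paper keeps the two terms $\|\phi'\|_{L^1}$ and $\|\phi\|_{L^1}$ separate in its displayed estimate and then asserts the conclusion, whereas you explicitly homogenize to $\|\phi\|_{W^{1,1}}$ and invoke $a+b\le 2\max(a,b)$.
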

		\begin{proof}
			Starting from \eqref{eq:difftransferop}, one gets
			\begin{align*}
				&\int_X|(\L_T\phi)'|~dm\\&\le\frac{1}{\lambda(T)}\int_X|\phi'|dm+\left(\int_X\frac{|T''|}{(T')^2}~dm\right)\max|\phi|\\
				&\le \left(\frac{1}{\lambda(T)}+\int_X\frac{|T''|}{(T')^2}~dm\right)\|\phi'\|_{L^1}+\int_X\frac{|T''|}{(T')^2}~dm\|\phi\|_{L^1},
			\end{align*}
			which immediately gives the result. 
		\end{proof}
		Now, \eqref{weakLY} follows easily from the previous lemma and the definition of a smooth expanding on average cocycle. To obtain \eqref{strongLY}, one needs a little more work and once again, we follow closely \cite[p.33]{Buzzi}. We denote $\lambda_*=\int_\Omega\log \lambda_\omega d\mathds P$ and $C_0(\omega)=C_0(T_\omega)$.
		By integrability of $\log C_0(\omega)$ and $\log\lambda_\omega$, we may choose $\epsilon_0>0$ sufficiently small so that, for any measurable $E\subset\Omega$ with $\mathds P(E)<\epsilon_0$, 
		\[\int_E \log C_0(\omega)d\mathds P\le \frac{\lambda_*}{4}\quad\text{and}\quad \int_E \log \lambda_\omega d\mathds P\le \frac{\lambda_*}{4}.\]
		We now let $\Delta$ be a constant so large that the set $\Omega_b:=\{\omega\in\Omega,~D(\omega)>\Delta\}$ has measure smaller than $\epsilon_0$, where $D(\omega)=D(T_\omega)$.  We then set:
		\[\alpha(\omega):=
		\left\{
		\begin{aligned}
			&1/\lambda_\omega~\text{if}~\omega\not\in\Omega_b\\
			&C_0(\omega)~\text{otherwise}.
		\end{aligned}
		\right. 
		\]
		and $K(\omega):=\max(C_0(\omega),\alpha(\omega),\Delta,6)$. One then has: 
		\begin{itemize}
			\item if $\omega\not\in\Omega_b$, then $\alpha(\omega)=\frac{1}{\lambda_\omega}$ and $K(\omega)\ge\Delta\ge D(\omega)$. Hence, by \eqref{eq:detstrongLY}, \eqref{strongLY} holds in this case.
			\item If $\omega\in\Omega_b$, then $\alpha(\omega)=C_0(\omega)$ and $K(\omega)\ge C_0(\omega)$, hence by Lemma \ref{lemma:detweakLY}, \eqref{strongLY} also holds in this case. 
		\end{itemize}
		Finally, we observe that by our assumptions, $\alpha$ is log-integrable (from which $K$ is also log-integrable), with
		\begin{align*}
			\int_\Omega\log\alpha(\omega)d\mathds P(\omega)&= -\int_\Omega\log\lambda_\omega d\mathds P(\omega)+\int_{\Omega_b}\left(\log\lambda_\omega+\log C_0(\omega)\right)d\mathds P(\omega)\\
			&\le -\lambda_*+\frac{\lambda_*}{4}+\frac{\lambda_*}{4}\\
			&<0.
		\end{align*}
		This completes the proof of the Lasota-Yorke inequalities for smooth expanding on average cocycles. As for the last assertion in Definition \ref{good},  we note that is shown in~\cite[Example 6]{DHS} that for each non-trivial interval $I\subset X$, for $\mathds P$-a.e $\omega\in\Omega$, there is a $n_c:=n_c(\omega,I)<\infty$ such that for all $n\ge n_c$,
\[
				T_\omega^n(I)=X.
			\]
Now it remains to argue as in \cite[Claim p.32]{Buzzi}, taking into account \cite[Remark 0.1, 2.]{Buzzi}.
	\end{example}

	\begin{example}[Multidimensional piecewise affine maps.]
		Our abstract setup also covers multidimensional examples. The one we describe now is due to Buzzi \cite[Appendix B]{Buzzi}.
		\par\noindent Recall that a polytope in $\R^d$ is defined as the intersection of half-spaces. If $X\subset\R^d$, let $P$ be a finite collection of pairwise disjoints, open polytopes $A$ of $\R^d$, such that $Y=\bigcup_{A\in P} A$ is dense in $X$. We now let $f:Y\to X$ be such that for any $A\in P$, $f:A\to f(A)\subset X$ is the restriction of an affine map $f_A$ of $\R^d$: we say that $(X,P,f)$ is a piecewise affine map. We will also assume that each $f_A$ is invertible.
		\par\noindent We define the expansion rate of $f$, \[\lambda(f):=\inf_{x\in Y}\inf_{\|v\|=1}\|Df(x)\cdot v\|.\]
		We also recall that, given a polytope $A\subset\R^d$, we can define the $\epsilon$-multiplicity of its boundary $\partial A$ at $x\in X$, $\mathrm{mult}(\partial A,\epsilon,x)$, as the number of hyperplanes in $\partial A$ having non-empty intersection with $B(x,\epsilon)$ the ball of radius $\epsilon$ centered at $x$. We then set
		\begin{align*}
		\mathrm{mult}(\partial P,\epsilon)&:=\sup_{x\in X}\sum_{\underset{A\in P}{x\in\bar A}}\mathrm{mult}(\partial A,\epsilon,x)\\
		\mathrm{mult}(\partial P)&:=\lim_{\epsilon\to 0}\mathrm{mult}(\partial A,\epsilon).
		\end{align*}
		Finally we notice that there are some $\epsilon>0$ for which $\mathrm{mult}(\partial P,\epsilon)=\mathrm{mult}(\partial P)$. We denote by $\epsilon(\partial P)$ the supremum of such $\epsilon$.
		\par\noindent Given a probability space $(\Omega,\mathcal F,\mathds P)$, endowed as usual with an invertible, measure-preserving and ergodic self map $\sigma$, we consider countably-valued, measurable families of polytopes $(A_\omega)_{\omega\in\Omega}$ of $X\subset\R^d$ and affine maps $(f_{A_\omega})_{\omega\in\Omega}$: this data defines a cocycle of random piecewise affine map $(X,P_\omega,f_\omega)$, for which we assume:
		\begin{enumerate}
			\item For any $n\in\mathbb N$, the map $\omega\mapsto(\lambda(f^n_\omega),|P^n_\omega|,\mathrm{mult}(\partial P^n_\omega),\epsilon(\partial P^n_\omega))$ is measurable.
			\item The map $\frac{|P|}{\lambda}$ is $\log^+$ $\mathds P$-integrable.
			\item The following expansion on average condition holds:
			\begin{equation*}
			\Lambda:=\lim_{n\to\infty}\lim_{K\to\infty}\int_\Omega\frac{1}{n}\log\min\left(\frac{\lambda^n_\omega}{\mathrm{mult}(P^n_\omega)},K\right)d\mathds P>0.
			\end{equation*}
			\item The following random covering condition holds: 
			\\For any ball $B\subset X$, $\mathds P$-a.e $\omega\in\Omega$ there is a $n_c:=n_c(\omega,B)$ such that $f_\omega^n(B)=X$ (modulo a null set for Lebesgue measure) for $n\ge n_c$,
		\end{enumerate}
		Under the previous assumptions, and for the notion of variation on $X$ given by \eqref{def:HDvar}, it is established in \cite[Prop B.1]{Buzzi} that a random piecewise affine map has a good  random transfer operator, in the sense of \cite[Def.1.1]{Buzzi}. Together with the assumption that this map is countably valued, this shows that the associated transfer operator cocycle is good in the sense of Definition \ref{good}.
	\end{example}

	\begin{remark}[Multidimensional piecewise expanding maps, beyond the affine case]
		The recent paper \cite{BGT}, which studies statistical properties of so-called `random Saussol maps', a class of piecewise expanding on average multidimensional systems, opens up the possibility  to apply our approach to this much larger family of multidimensional examples, by establishing \eqref{weakLY} and \eqref{strongLY} for the transfer operator cocycle acting on spaces on bounded oscillation. Despite this progress, the question of mixing remains open in this setting, so that we cannot directly apply our approach to this type of system.
	\end{remark}
	For any cocycle having a good family of transfer operators in the sense of Definition \ref{good} (in particular, for the three previous examples), the following result holds:
	\begin{theorem}[\cite{Buzzi}, Main Theorem]\label{7}
		Let $\L=(\L_\omega)_{\omega \in \Omega}$ be a good cocycle of transfer operators. Then, we have the following:
		\begin{enumerate}
			\item there exists an essentially unique measurable family $(v_\omega^0)_{\omega \in \Omega}\subset BV$ such that $v_\omega \ge 0$, $\int_X v_\omega^0 \, dm=1$ and
			\[
			\mathcal L_\omega v_\omega^0=v_{\sigma \omega}^0, \quad \text{for $\mathbb P$-a.e. $\omega \in \Omega$.}
			\]
			\item There exists a random variable $Z\colon \Omega \to (0, +\infty)$ and $\rho \in (0,1)$ such that for $\mathds P$-a.e $\omega\in\Omega$, any $\phi\in BV$ and any $n\ge 0$,
			\begin{equation}\label{decayofcorrelations}
				\left\|\L_\omega^n\phi-\left(\int_X\phi~dm\right)v^0_{\sigma^n\omega}\right\|_{\infty}\le Z(\omega)\rho^n\|\phi\|_{L^1}.
			\end{equation}
			\item Furthermore, 
			\begin{equation}\label{decayofcorrelations1}
				\bigg{\lvert} \int_X \mathcal L_\omega^n (\varphi v_\omega^0)\psi \, dm \bigg{\rvert} \le Z(\omega)\rho^n \|\psi \|_\infty \cdot \|\varphi \|_{BV},
			\end{equation}
			for $\mathbb P$-a.e. $\omega \in \Omega$, $\psi$ bounded and $\varphi \in BV$ such that $\int_X \varphi\, d\mu_\omega=0$, where $d\mu_\omega=v_\omega^0\, dm$.
		\end{enumerate}
	\end{theorem}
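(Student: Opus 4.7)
The plan is to deduce Theorem \ref{7} by combining Lemma \ref{QC} with the multiplicative ergodic theorem (Theorem \ref{MET}) and exploiting the covering hypothesis from the fourth bullet of Definition \ref{good}. We view the cocycle $\L$ as acting on $\B=BV$ throughout.

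First, I verify the hypotheses of Lemma \ref{QC} with $\B=BV$ and $\B'=L^1(X,m)$: property (V4) gives the compact inclusion, \eqref{weakLY} yields \eqref{2} with $\gamma_\omega=C(\omega)$, and \eqref{strongLY} yields \eqref{1} with $\alpha_\omega=\alpha^N(\omega)$ and $\beta_\omega=K^N(\omega)$ (using $\|\cdot\|_{BV}=\var(\cdot)+\|\cdot\|_1$ and the fact that transfer operators contract the $L^1$ norm on non-negative functions). Since $\L_\omega$ preserves $\int\cdot\,dm$ on the positive cone while $\|\cdot\|_{BV}\geq\|\cdot\|_1$, one has $\Lambda(\mathcal R)\geq 0$; iterating \eqref{strongLY} and invoking Birkhoff's theorem gives $\Lambda(\mathcal R)\leq 0$, so $\Lambda(\mathcal R)=0$, and the first inequality in \eqref{3} follows from \eqref{qc}. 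Lemma \ref{QC} then yields quasi-compactness and Theorem \ref{MET} produces an Oseledets splitting.

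Second, I would identify $Y_1(\omega)$ as one-dimensional and spanned by a non-negative equivariant density $v_\omega^0$. Existence comes from extracting an $L^1$-limit point of the pull-back sequence $\L_{\sigma^{-n}\omega}^n h_0$ for a fixed strictly positive $h_0\in BV$ of unit integral: the BV-norms of iterates are controlled by iterating \eqref{strongLY} (via Birkhoff on $\log\alpha^N$), (V4) provides compactness, and lower semicontinuity of $\var$ places the limit in $BV$; normalization, non-negativity, and equivariance are routine. One-dimensionality of $Y_1(\omega)$ is more delicate: by contradiction, a second independent equivariant element would produce a zero-mean $w\in Y_1(\omega)$; iterated \eqref{strongLY} places the normalized positive and negative parts $w_\pm/\|w_\pm\|_1$ into a cone $C_a$ after a random time, and the covering hypothesis then forces $\essinf\L_\omega^n w_\pm\geq\alpha_n\|w_\pm\|_1>0$, contradicting $\int\L_\omega^n w\,dm=0$ unless $w=0$. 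This is the technical heart of the argument and precisely where the covering hypothesis plays its decisive role.

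Third, for \eqref{decayofcorrelations}, decompose $\phi=(\int\phi\,dm)v_\omega^0+\phi_\perp$ with $\phi_\perp\in H(\omega)=V(\omega)\oplus\bigoplus_{i\geq 2}Y_i(\omega)$; the projection along the Oseledets splitting is BV-bounded by a tempered constant (since $\|v_\omega^0\|_{BV}$ is tempered by membership in $Y_1$), and Theorem \ref{MET} gives $\|\L_\omega^n\phi_\perp\|_{BV}\leq Z_1(\omega)\rho_1^n\|\phi_\perp\|_{BV}$ for some tempered $Z_1(\omega)$ and $\rho_1\in(0,1)$ determined by the deterministic second Lyapunov exponent. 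Property (V3) converts this to an $\|\cdot\|_\infty$ bound. To replace $\|\phi_\perp\|_{BV}$ on the right by $\|\phi\|_{L^1}$, I would factor $\L_\omega^n=\L_{\sigma^m\omega}^{n-m}\circ\L_\omega^m$ with $m=\lfloor n/2\rfloor$: iterating \eqref{strongLY} yields $\|\L_\omega^m\phi_\perp\|_{BV}\leq A_\omega^{(m)}\|\phi_\perp\|_{BV}+B_\omega^{(m)}\|\phi_\perp\|_{L^1}$ with $A_\omega^{(m)}$ geometrically small and $B_\omega^{(m)}$ tempered, after which the factor $\rho_1^{n-m}A_\omega^{(m)}$ is absorbed into a new rate $\rho>\rho_1$, while $\|\phi_\perp\|_{L^1}\leq 2\|\phi\|_{L^1}$ handles the residual term. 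Item (3) follows from (2) by duality: if $\int\varphi\,d\mu_\omega=0$ then $\varphi v_\omega^0\in BV$ has zero $m$-integral (using (V8) and the tempered BV-norm of $v_\omega^0$), so applying (2) to $\varphi v_\omega^0$ and pairing against $\psi\in L^\infty$ gives the estimate. Beyond step two, the only real technical subtlety is the careful bookkeeping of tempered random constants when transferring the MET's BV decay into the target $L^1$-to-$L^\infty$ form.
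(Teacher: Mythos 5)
The paper does not prove Theorem~\ref{7}: it is explicitly attributed to~\cite{Buzzi} and simply cited, with the subsequent remark only explaining how to translate Buzzi's original formulation (stated at $\sigma^{-n}\omega$ and for normalized non-negative densities) into the exact form used here. Your proposal, by contrast, attempts a from-scratch reconstruction of Buzzi's theorem via the multiplicative ergodic theorem and Lemma~\ref{QC}. This is a genuinely different route — but, unfortunately, it does not close.

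The decisive gap is in item (2): the estimate~\eqref{decayofcorrelations} is an $L^1$-to-$L^\infty$ smoothing bound, i.e.\ the right-hand side carries $\|\phi\|_{L^1}$, not $\|\phi\|_{BV}$. The MET (together with the Pesin-type regularization of Proposition~\ref{91}) only produces a $BV$-to-$BV$ bound $\|\L_\omega^n\phi_\perp\|_{BV}\le D_1(\omega)e^{-\lambda n}\|\phi_\perp\|_{BV}$, and your ``half-iterate'' trick does not remove the $BV$-norm from the right-hand side: after splitting $\L_\omega^n=\L_{\sigma^m\omega}^{n-m}\circ\L_\omega^m$ and iterating~\eqref{strongLY}, you are left with a residual term of the form $D_1(\sigma^m\omega)e^{-\lambda(n-m)}A_\omega^{(m)}\|\phi_\perp\|_{BV}$. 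For a fixed $n$, the prefactor is a fixed positive number while $\|\phi_\perp\|_{BV}$ ranges over all of $[0,\infty)$ among $\phi$ with $\|\phi\|_{L^1}=1$; no choice of deterministic $\rho$ and random $Z(\omega)$ can dominate this by $Z(\omega)\rho^n\|\phi\|_{L^1}$ uniformly in $\phi$. The $L^1$-on-the-right form is precisely what the covering hypothesis plus a cone-contraction (Hilbert metric / Birkhoff cone) argument buys in Buzzi's proof, and it is not recoverable from the Oseledets machinery alone. A secondary gap is in the one-dimensionality step: showing $\essinf\L_\omega^n w_\pm>0$ does not contradict $\int\L_\omega^n w\,dm=0$ — the difference of two strictly positive functions can certainly integrate to zero — so the stated contradiction is not one. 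The correct argument again goes through the contraction on the cone of positive densities, which forces $\L_\omega^n w_+$ and $\L_\omega^n w_-$ to converge (after normalization) to the same limit, whence $\L_\omega^n w\to 0$ in $L^1$, contradicting $w\in Y_1(\omega)$ with exponent $\lambda_1=0$.
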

	\begin{remark}
		The above stated Theorem~\ref{7}, particularly its second assertion,  is slightly different from the one stated in \cite{Buzzi}: let us explain how to get from the former to the latter:
		\begin{itemize}
			\item \cite[Main Theorem, 2.]{Buzzi} is stated at $\sigma^{-n}\omega$: to get it at $\omega$ instead, we simply remark that we may take $p=0$ in \cite[Prop. 4.1]{Buzzi} (instead of $p=-n$), which is possible since the proposition is valid for any $|p|\le n$.
			\item \cite[Main Theorem, 2.]{Buzzi} is stated for non-negative $\phi$ such that $\|\phi\|_1=1$. Hence, to get our version, starting from $\phi\ge 0$, simply replace $\phi$ by $\frac{\phi}{\|\phi\|_1}$ and multiply both sides by $\|\phi\|_1$. 
			For arbitrary real-valued\footnote{We note that, by decomposing it as the sum of its real and imaginary part, this also holds for general complex-valued $\phi \in BV$.} $\phi\in BV$, decompose it as the sum of its positive and negative parts: $\phi=\phi^+-\phi^-$, and using the inequality $\|u-v\|_\infty\le\|u+v\|_\infty$ for non-negative, bounded functions, we get:
			\begin{align*}
				\left\|\L_\omega^n\phi-\left(\int_X\phi~dm\right)v^0_\omega\right\|_{\infty}&\le 	\left\|\L_\omega^n|\phi|-\left(\int_X|\phi|~dm\right)v^0_\omega\right\|_{\infty}
			\end{align*}
			which immediately gives the announced result.  
		\end{itemize} 
	\end{remark}
	From Theorem \ref{7}, one easily derives the following:
	\begin{lemma}\label{4}
		Let $\L=(\L_\omega)_{\omega \in \Omega}$ be a good cocycle of transfer operators. Then, $\Lambda(\L) \ge 0$ and $\kappa(\L)<0$. In particular, $\L$ is quasi-compact. 
	\end{lemma}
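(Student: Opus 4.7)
The plan is to prove the two inequalities $\Lambda(\L) \ge 0$ and $\kappa(\L) < 0$ separately, and then combine them. For the lower bound on the top Lyapunov exponent, I will exploit the equivariant density $v_\omega^0 \in BV$ produced by Theorem~\ref{7}. Since $\L_\omega^n v_\omega^0 = v_{\sigma^n \omega}^0$ has $L^1$-norm equal to $1$, and since the BV norm dominates the $L^1$ norm (by (V3)), I get the operator-norm lower bound
\[
\|\L_\omega^n\|_{BV \to BV} \ge \frac{\|\L_\omega^n v_\omega^0\|_{BV}}{\|v_\omega^0\|_{BV}} \ge \frac{1}{\|v_\omega^0\|_{BV}}.
\]
Taking $\tfrac1n\log$ and letting $n\to\infty$ for a fixed $\omega$ (so that $\|v_\omega^0\|_{BV}$ is a fixed finite quantity) immediately yields $\Lambda(\L) \ge 0$.

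For the upper bound on the index of compactness, I intend to apply Lemma~\ref{QC} with $\mathcal B = BV$, $\mathcal B' = L^1(X,m)$. The required compact embedding is furnished by axiom (V4), and $\L_\omega$ extends continuously to $L^1$ since it is defined there in the first place. From the strong Lasota--Yorke inequality~\eqref{strongLY}, together with the contraction property $\|\L_\omega h\|_1 \le \|h\|_1$ of the transfer operator in $L^1$, I obtain
\[
\|\L_\omega^N h\|_{BV} = \|\L_\omega^N h\|_1 + \var(\L_\omega^N h) \le \alpha^N(\omega)\|h\|_{BV} + (1 + K^N(\omega))\|h\|_1,
\]
which is exactly hypothesis~\eqref{1} with $\alpha_\omega = \alpha^N(\omega)$ and $\beta_\omega = 1 + K^N(\omega)$. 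Hypothesis~\eqref{2} holds with $\gamma_\omega = C(\omega)$ by~\eqref{weakLY}, and the integrability in~\eqref{3} follows from $\log C \in L^1(\Omega,\mathbb P)$ and $\log K^N \in L^1(\Omega,\mathbb P)$. Crucially, the sign condition $\int_\Omega \log \alpha^N \, d\mathbb P < N\Lambda(\L)$ is satisfied because $\int_\Omega \log \alpha^N\, d\mathbb P < 0$ by~\eqref{qc} while $\Lambda(\L) \ge 0$ from the first step. The lemma then gives
\[
\kappa(\L) \le \frac{1}{N}\int_\Omega \log \alpha^N \, d\mathbb P < 0,
\]
completing the second inequality.

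Combining the two pieces, $\kappa(\L) < 0 \le \Lambda(\L)$, so the cocycle is quasi-compact by definition. There is no serious obstacle here: the argument is a direct marriage of the standard Lasota--Yorke machinery with the equivariant density, and the only point that requires minimal care is interlocking the bounds so that the sign condition $\int \log \alpha_\omega\, d\mathbb P < N\Lambda(\L)$ in Lemma~\ref{QC} is met—this is precisely what makes it essential to establish $\Lambda(\L) \ge 0$ \emph{before} invoking the quasi-compactness lemma.
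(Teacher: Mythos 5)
Your proof is correct and follows essentially the same route as the paper's: the equivariance $\L_\omega^n v_\omega^0 = v_{\sigma^n\omega}^0$ together with $\|v_{\sigma^n\omega}^0\|_1 = 1$ gives $\Lambda(\L)\ge 0$, and then Lemma~\ref{QC} applied with $|\cdot| = \|\cdot\|_1$ and the choices $\alpha_\omega = \alpha^N(\omega)$, $\beta_\omega = K^N(\omega)+1$, $\gamma_\omega = C(\omega)$ yields $\kappa(\L)<0$. One small slip: the inequality $\|h\|_{BV}\ge\|h\|_1$ is immediate from the definition $\|h\|_{BV}=\|h\|_1+\var(h)$, not a consequence of (V3), which instead bounds the $L^\infty$ norm.
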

	
	\begin{proof}
		Using the same notation as in the statement of Theorem~\ref{7}, we have that 
		\[
		\begin{split}
			\limsup_{n\to \infty} \frac 1 n \log \|\L_\omega^n \|_{BV}& \ge \limsup_{n\to \infty} \frac 1 n \log \|\L_\omega^n v_\omega^0\|_{BV} \\
			&=\limsup_{n\to \infty} \frac 1 n \log \|v_{\sigma^n \omega}^0  \|_{BV} \\
			&\ge \limsup_{n\to \infty} \frac 1 n \log \|v_{\sigma^n \omega}^0  \|_{1} \\
			&=0,
		\end{split}
		\]
		for $\mathbb P$-a.e. $\omega \in \Omega$. This implies that $\Lambda (\L)\ge 0$. 
		
		We now apply Lemma~\ref{QC} with $\| \cdot \|=\|\cdot \|_{BV}$ and $|\cdot |=\| \cdot \|_1$. Indeed, observe that~\eqref{weakLY} and~\eqref{strongLY}  imply that~\eqref{1} and~\eqref{2} hold with
		\[
		\alpha_\omega=\alpha^N(\omega), \quad \beta_\omega=K^N(\omega)+1 \quad \text{and} \quad \gamma_\omega=C(\omega). 
		\]
		Since $\log C\in L^1(\Omega, \mathbb P)$, $\int_\Omega \log  \alpha^N \, d\mathbb P <0$ and $\Lambda (\L)\ge 0$, we have that~\eqref{3} holds.  By Lemma~\ref{QC}, we obtain the desired conclusion.
		
	\end{proof}
	
	\begin{proposition}\label{zero}
		Let $\L=(\L_\omega)_{\omega \in \Omega}$ be a good cocycle of transfer operators. Then, $\Lambda (\L)=0$.
	\end{proposition}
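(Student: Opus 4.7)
Since Lemma~\ref{4} already provides $\Lambda(\L)\ge 0$, the task reduces to establishing the reverse inequality $\Lambda(\L)\le 0$. The plan is to show directly from the Lasota-Yorke inequalities that $n\mapsto\|\L_\omega^n\|_{BV\to BV}$ grows at most subexponentially for $\mathbb P$-a.e.\ $\omega\in\Omega$, which by definition implies $\Lambda(\L)\le 0$.

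The key step is to iterate the strong Lasota-Yorke inequality \eqref{strongLY} along multiples of $N$. Writing $\L_\omega^{kN}=\L_{\sigma^{(k-1)N}\omega}^N\circ\cdots\circ\L_\omega^N$ and using the $L^1$-contraction property $\|\L_\omega^j h\|_1\le\|h\|_1$ at each stage, one obtains inductively, for every $k\ge 1$,
\begin{equation*}
\var(\L_\omega^{kN}h)\le\prod_{j=0}^{k-1}\alpha^N(\sigma^{jN}\omega)\cdot\var(h)+\Bigl[\sum_{j=0}^{k-1}\prod_{i=j+1}^{k-1}\alpha^N(\sigma^{iN}\omega)\cdot K^N(\sigma^{jN}\omega)\Bigr]\|h\|_1.
\end{equation*}
By Birkhoff's ergodic theorem applied to $\log\alpha^N\in L^1(\mathbb P)$, the product $\prod_{j=0}^{k-1}\alpha^N(\sigma^{jN}\omega)$ decays like $e^{ka}$ with $a:=\int_\Omega\log\alpha^N\,d\mathbb P<0$; in particular it is bounded. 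Combined with the temperedness of $K^N$ along the orbit of $\sigma^N$ (namely $\tfrac 1 j\log K^N(\sigma^{jN}\omega)\to 0$ a.s., which follows from $\log K^N\in L^1(\mathbb P)$ via Borel-Cantelli), a direct computation splitting the geometric sum shows that the bracketed quantity grows at most subexponentially in $k$. Hence $\|\L_\omega^{kN}\|_{BV\to BV}\le e^{o(k)}$.

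For general $n=kN+r$ with $0\le r<N$, one factors $\L_\omega^n=\L_{\sigma^{kN}\omega}^r\circ\L_\omega^{kN}$ and applies \eqref{weakLY} $r<N$ times, producing a multiplicative factor bounded by $\prod_{s=0}^{r-1}C(\sigma^{kN+s}\omega)$. Temperedness of $C$ along $\sigma$-orbits (again from $\log C\in L^1(\mathbb P)$) makes this factor $e^{o(n)}$, so $\|\L_\omega^n\|_{BV\to BV}\le e^{o(n)}$. Combined with Lemma~\ref{4}, this yields $\Lambda(\L)=0$. The main technical care lies in the second step, where the exponentially-decaying cocycle generated by $\alpha^N$ must be combined with the merely tempered random variable $K^N$ inside the sum; this is a routine but careful Birkhoff/temperedness computation underwritten by the integrability assumptions in Definition~\ref{good}.
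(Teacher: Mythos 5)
Your approach is genuinely different from the paper's: where the paper simply invokes the abstract result~\cite[Lemma C.5]{GTQ} applied to the block cocycle $(\L_\omega^N)_\omega$ over $\sigma^N$, you iterate the strong Lasota--Yorke inequality \eqref{strongLY} directly, combine it with the $L^1$-contraction property and the temperedness of the log-integrable random variables, and reduce general $n=kN+r$ to the case $n=kN$ via \eqref{weakLY}. The outer structure is sound and the final reduction is handled correctly, and this route has the advantage of being self-contained.

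There is, however, a genuine gap in the key step. Birkhoff's ergodic theorem applied to the transformation $\sigma^N$ gives, for $\mathbb P$-a.e.\ $\omega$,
\begin{equation*}
\frac{1}{k}\sum_{j=0}^{k-1}\log\alpha^N(\sigma^{jN}\omega)\longrightarrow \mathbb E\bigl[\log\alpha^N\,\big|\,\mathcal I_{\sigma^N}\bigr](\omega),
\end{equation*}
where $\mathcal I_{\sigma^N}$ is the $\sigma^N$-invariant $\sigma$-algebra. Since $\sigma$ is only assumed ergodic (not totally ergodic), $\sigma^N$ may fail to be ergodic, so this conditional expectation is in general \emph{not} the constant $a=\int_\Omega\log\alpha^N\,d\mathbb P$: it may well be nonnegative on a set of positive measure, and on that set the product $\prod_{j=0}^{k-1}\alpha^N(\sigma^{jN}\omega)$ is not bounded (it can grow exponentially), so both your bound on the first term and your subexponential estimate for the bracketed sum break down. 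The argument can be repaired by one additional observation: $\Lambda(\L)$ is $\mathbb P$-a.s.\ constant (Kingman together with ergodicity of $\sigma$), so it suffices to verify the subexponential bound for $\omega$ in \emph{some} set of positive measure. Since $\int_\Omega \mathbb E[\log\alpha^N|\mathcal I_{\sigma^N}]\,d\mathbb P=\int_\Omega\log\alpha^N\,d\mathbb P<0$, the set $\{\mathbb E[\log\alpha^N|\mathcal I_{\sigma^N}]<0\}$ has positive measure, and there your Birkhoff/temperedness computation does go through. Without this extra step the proof as written is incomplete.
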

	
	\begin{proof}
		By Lemma~\ref{4} we have that $\Lambda (\L) \ge 0$. 
		By using that $\|\L_\omega \|_1 \le 1$ and applying~\cite[Lemma C.5]{GTQ} (which we can due to Lemma~\ref{4}) for the cocycle $(\L_\omega^N)_{\omega \in \Omega}$ over $\sigma^N$, we conclude  that $\Lambda (\mathcal L)\le 0$.
		
	\end{proof}
	
	\begin{proposition}\label{90}
		Let $\L=(\L_\omega)_{\omega \in \Omega}$ be a good cocycle of transfer operators. Furthermore, set
		\[
		BV^0=\bigg{\{} h\in BV: \int_X h\, dm=0 \bigg{\}}.
		\]
		Then, using the same notation as in the statement of Theorem~\ref{MET},  we have the following:
		\begin{itemize}
			\item For $\mathbb P$-a.e. $\omega \in \Omega$,
			\begin{equation}\label{split}
				BV=Y_1(\omega) \oplus BV^0, 
			\end{equation}
			where $Y_1(\omega)$ is a one-dimensional subspace of $BV$ spanned by $v_\omega^0$.
			\item For $\mathbb P$-a.e. $\omega \in \Omega$, 
			\begin{equation}
				BV^0=V(\omega)\oplus\bigoplus_{j=2}^l Y_j(\omega).
			\end{equation}
		\end{itemize}
	\end{proposition}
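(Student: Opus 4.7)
The plan is to derive the splitting from three ingredients: the Oseledets decomposition of Theorem~\ref{MET} with $\lambda_1=0$ (by Proposition~\ref{zero}) and all other exponents (and $\kappa$) strictly negative (by Lemma~\ref{4}); the continuous functional $\ell(h):=\int_X h\,dm$ on $BV$, whose kernel is exactly $BV^0$ and which satisfies the $\mathcal L$-invariance $\ell\circ\mathcal L_\omega=\ell$; and the quenched decay-of-correlations estimate~\eqref{decayofcorrelations}.

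The easy inclusion is $V(\omega)\oplus\bigoplus_{j\ge 2}Y_j(\omega)\subset BV^0$: for any $h$ in this sum, Theorem~\ref{MET} yields $\|\mathcal L_\omega^n h\|_{BV}\to 0$ exponentially, hence in $L^1$, so by invariance and continuity of $\ell$ one obtains $\ell(h)=\lim_n\ell(\mathcal L_\omega^n h)=0$. The key step is the reverse inclusion $Y_1(\omega)\cap BV^0=\{0\}$: given $h\in Y_1(\omega)\cap BV^0$ with $h\ne 0$, \eqref{decayofcorrelations} together with $\int h\,dm=0$ gives $\|\mathcal L_\omega^n h\|_\infty\le Z(\omega)\rho^n\|h\|_1$, hence exponential $L^1$-decay since $(X,m)$ is a probability space. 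I would bootstrap this to exponential $BV$-decay by iterating the strong Lasota-Yorke inequality~\eqref{strongLY}: with $u_k:=\var(\mathcal L_\omega^{kN}h)$, one finds
\begin{equation*}
u_{k+1}\le\alpha^N(\sigma^{kN}\omega)\,u_k+K^N(\sigma^{kN}\omega)\,Z(\omega)\rho^{kN}\|h\|_1,
\end{equation*}
so that $u_k$ is controlled by $u_0\prod_{j<k}\alpha^N(\sigma^{jN}\omega)$ plus an exponentially small inhomogeneous term. Since $\int\log\alpha^N\,d\mathbb P<0$ while $K^N,Z,C$ are tempered, Birkhoff's ergodic theorem forces $u_k\to 0$ exponentially, and~\eqref{weakLY} absorbs the intermediate times. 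This produces exponential $BV$-decay of $\mathcal L_\omega^n h$, contradicting the Oseledets rate $\frac{1}{n}\log\|\mathcal L_\omega^n h\|_{BV}\to\lambda_1=0$ unless $h=0$.

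Combining both inclusions, any $h\in BV^0$ decomposes via Oseledets as $h=y_1+z$ with $y_1\in Y_1(\omega)$ and $z\in V(\omega)\oplus\bigoplus_{j\ge 2}Y_j(\omega)\subset BV^0$; then $y_1=h-z\in Y_1\cap BV^0=\{0\}$, whence $BV^0=V(\omega)\oplus\bigoplus_{j\ge 2}Y_j(\omega)$, proving the second bullet, and a codimension comparison yields $\dim Y_1(\omega)=1$. For the first bullet it remains to show $v_\omega^0\in Y_1(\omega)$: decompose $v_\omega^0=y(\omega)+z(\omega)$ in the same fashion; the equivariance $\mathcal L_\omega v_\omega^0=v_{\sigma\omega}^0$ (Theorem~\ref{7}) together with uniqueness of the Oseledets decomposition forces $\mathcal L_\omega z(\omega)=z(\sigma\omega)$, so that $\|z(\sigma^n\omega)\|_{BV}=\|\mathcal L_\omega^n z(\omega)\|_{BV}$ would decay exponentially if $z(\omega)\ne 0$; this is incompatible with $\sigma$-ergodicity applied to the measurable $\omega\mapsto\|z(\omega)\|_{BV}$, so $z\equiv 0$ and hence $v_\omega^0\in Y_1(\omega)$. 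Combined with $\dim Y_1=1$, this gives $Y_1(\omega)=\mathbb R v_\omega^0$.

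The main technical obstacle is the Lasota-Yorke bootstrap from $L^\infty$- to $BV$-decay, since the iteration of~\eqref{strongLY} involves Birkhoff sums of $\log\alpha^N$ along the $\sigma^N$-orbit, requiring some care when $\sigma^N$ may fail to be ergodic even though $\sigma$ is. Nevertheless, the strict negativity of $\int\log\alpha^N\,d\mathbb P$ combined with the temperedness of $K^N$, $Z$ and $C$ is enough to push the argument through.
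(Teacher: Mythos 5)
Your overall strategy is essentially the same as the paper's: combine the decay of correlations~\eqref{decayofcorrelations} to obtain exponential $L^\infty$ (hence $L^1$) decay of $\L_\omega^n h$ for $h\in BV^0$, bootstrap to exponential $BV$-decay via the Lasota-Yorke structure, and then match $\spn(v_\omega^0)\oplus BV^0$ against the Oseledets splitting. The paper is a bit more economical: it establishes $\lim_n\frac{1}{n}\log\|\L_\omega^n h\|_{BV}<0$ for \emph{all} $h\in BV^0$ in one shot and then observes that $BV=\spn(v_\omega^0)\oplus BV^0$ is an equivariant decomposition with $v_\omega^0$ at exponent $0$ and all of $BV^0$ at strictly negative exponent, so both identifications $Y_1(\omega)=\spn(v_\omega^0)$ and $BV^0=V(\omega)\oplus\bigoplus_{j\ge 2}Y_j(\omega)$ follow from uniqueness of the Oseledets splitting. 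Your separate recurrence/ergodicity argument to show $v_\omega^0\in Y_1(\omega)$ is correct but not needed once one argues this way.

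The one genuine gap is in your Lasota-Yorke bootstrap, which you flag yourself. You iterate~\eqref{strongLY} along the $\sigma^N$-orbit to get $u_{k}\le u_0\prod_{j<k}\alpha^N(\sigma^{jN}\omega)+(\text{inhomogeneous tail})$ and invoke Birkhoff. But Birkhoff for $\sigma^N$ gives that $\frac{1}{k}\sum_{j<k}\log\alpha^N(\sigma^{jN}\omega)$ converges to the \emph{conditional} expectation $\mathbb E[\log\alpha^N\mid\mathcal I_{\sigma^N}]$; when $\sigma^N$ is not ergodic this can be $\ge 0$ on some ergodic components of $\sigma^N$ even though $\int_\Omega\log\alpha^N\,d\mathbb P<0$, in which case the homogeneous term $u_0\prod_{j<k}\alpha^N(\sigma^{jN}\omega)$ need not decay. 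Appealing to temperedness of $K^N$, $Z$, $C$ does not touch this term, which involves only $\alpha^N$. This is precisely why the paper routes the bootstrap through~\cite[Lemma C.5]{GTQ} applied to the rescaled cocycle $\bar\L_\omega=e^a\L_\omega^N$ over $\sigma^N$, with $a>0$ chosen so that $a+N\log\rho<0$ and $a+\int\log\alpha^N\,d\mathbb P<0$: that lemma upgrades the weak-norm decay rate to a strong-norm rate using the quasicompactness structure, bypassing the need for component-wise negativity of the Birkhoff averages of $\log\alpha^N$. To close your gap you would either need to invoke such a lemma, or argue differently, e.g.\ using that the Lyapunov spectrum of $\L$ over the ergodic $\sigma$ is already known (Lemma~\ref{4}, Proposition~\ref{zero}) and comparing weak- and strong-norm growth rates on the finite-dimensional Oseledets spaces.
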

	
	\begin{proof}
		We claim that 
		\begin{equation}\label{neg}
			\lim_{n\to \infty} \frac 1 n \log \|\L_\omega^n h\|_{BV} <0, \quad \text{for $\mathbb P$-a.e. $\omega \in \Omega$ and $h\in BV^0$.}
		\end{equation}
		Once we establish~\eqref{neg}, both assertions of the proposition will follow from  the uniqueness of the Oseledets splitting \eqref{eq:splitting} and the simple observation that for any $h\in BV$ and $\mathbb P$-a.e. $\omega \in \Omega$,
		\[
		h= \bigg (\int_X h\, dm \bigg )v_\omega^0+\bigg (h-\bigg (\int_X h\, dm \bigg )v_\omega^0 \bigg )\in \spn (v_\omega^0)  +BV^0.
		\]
		\par\noindent We first observe that the existence of the limit in~\eqref{neg} follows from Theorem~\ref{MET}. 
		\par\noindent For $h\in BV^0$, one has from~\eqref{decayofcorrelations} that
		\begin{equation}\label{162}
			\limsup_{n\to\infty}\frac{1}{nN}\log\|\L_\omega^{nN} h\|_{\infty}\le \log \rho <0.
		\end{equation}
		Choose $a>0$ sufficiently small such that $a+N\log \rho <0$ and 
		\[
		a+\int_\Omega \log\alpha^N \, d\mathbb P<0.
		\]
		By applying~\cite[Lemma C.5]{GTQ} for the cocycle $(\bar{\L}_\omega)_{\omega \in \Omega}$ over $\sigma^N$ where $\bar{\L}_\omega=e^a \L_\omega^N$, we conclude that from~\eqref{162} that
		\[
		\limsup_{n\to \infty} \frac{1}{nN} \log \|\L_\omega^{nN} h\|_{BV} <0,
		\]
		which implies~\eqref{neg}.
	\end{proof}
	
	Notice that Theorem \ref{thm:dualcocycle}, Lemma \ref{4}, Propositions \ref{zero} and \ref{90} imply the following result for the dual cocycle $\L^*$:
	\begin{cor}
		If $\L$ is a cocycle of good transfer operators, then $\L^*$ satisfies:
		\begin{enumerate}
			\item $0=\Lambda(\L^*)>\kappa(\L^*)$. In particular, $\L^*$ is quasi-compact.
			\item One has the following splitting for $BV^*$:
			\[BV^*=\spn (m)\oplus Y_1(\omega)^\circ,\]
			where $\spn (m)$ is the one-dimensional subspace of $BV^*$ spanned by Lebesgue measure\footnote{we identify the Lebesgue measure $m$ with the functional $\phi \to \int_X\phi \,  dm$ on $BV$} and $Y_1(\omega)^\circ$ is the annihilator of $v_\omega^0$.
		\end{enumerate}
	\end{cor}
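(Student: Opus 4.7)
The proof will be essentially a matter of combining already established facts, with Theorem~\ref{thm:dualcocycle} as the central tool. For item (1), I note that Theorem~\ref{thm:dualcocycle} asserts that $\mathcal{R}^*$ has the same set of Lyapunov exponents (with multiplicities) as $\mathcal R$. In particular, $\Lambda(\L^*)=\Lambda(\L)$ and $\kappa(\L^*)=\kappa(\L)$. The first equality together with Proposition~\ref{zero} yields $\Lambda(\L^*)=0$, while the second equality combined with Lemma~\ref{4} gives $\kappa(\L^*)<0$. Thus $\L^*$ is quasi-compact.

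For item (2), the strategy is to identify explicitly the top Oseledets space $Y_1^*(\omega)$ of $\L^*$ and its complementary hyperplane $H^*(\omega)$ appearing in the splitting \eqref{split*}. From Theorem~\ref{thm:dualcocycle}, $Y_1^*(\omega)=H(\omega)^\circ$ where $H(\omega)=V(\omega)\oplus\bigoplus_{i=2}^l Y_i(\omega)$. By Proposition~\ref{90}, this complement $H(\omega)$ is exactly $BV^0$, the kernel of the continuous linear functional $\phi\mapsto\int_X\phi\,dm$. Since $BV^0$ is a closed hyperplane, its annihilator is one-dimensional and spanned by the functional $m$ itself, giving $Y_1^*(\omega)=\spn(m)$.

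It remains to show that $H^*(\omega)=Y_1(\omega)^\circ$. One inclusion is immediate from the other relation in Theorem~\ref{thm:dualcocycle}, namely $Y_1(\omega)=(H^*(\omega))^\circ$: any $\ell\in H^*(\omega)$ annihilates $(H^*(\omega))^\circ=Y_1(\omega)$, so $H^*(\omega)\subseteq Y_1(\omega)^\circ$. For equality, I rely on a codimension count: $H^*(\omega)$ has codimension $\dim Y_1^*(\omega)=m_1=1$ in $BV^*$, and $Y_1(\omega)^\circ$ is the annihilator of the one-dimensional subspace $\spn(v_\omega^0)$, which also has codimension $1$. Combining the two identifications yields the claimed decomposition $BV^*=\spn(m)\oplus Y_1(\omega)^\circ$.

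No part of this argument should present any real obstacle; everything is deductive from Theorem~\ref{thm:dualcocycle} once Proposition~\ref{90} has pinned down the concrete form of $H(\omega)$. The only mildly delicate point is the codimension argument for $H^*(\omega)=Y_1(\omega)^\circ$, which has to invoke $m_1=1$, a consequence of the one-dimensionality of the top Oseledets space $Y_1(\omega)=\spn(v_\omega^0)$ established in Proposition~\ref{90}.
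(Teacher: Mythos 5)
Your argument is correct and is exactly the route the paper has in mind: it simply cites Theorem~\ref{thm:dualcocycle}, Lemma~\ref{4}, Propositions~\ref{zero} and~\ref{90} without spelling out the details, and your proposal fills those in faithfully (identify $H(\omega)=BV^0$ so $Y_1^*(\omega)=(BV^0)^\circ=\spn(m)$, then get $H^*(\omega)=Y_1(\omega)^\circ$ from the inclusion plus the codimension-one count using $m_1=1$). The only cosmetic slip is attributing $\Lambda(\L^*)=\Lambda(\L)$ and $\kappa(\L^*)=\kappa(\L)$ to Theorem~\ref{thm:dualcocycle}; these are stated as observations just before that theorem, but this does not affect the validity of the argument.
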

	
	We now recall the notion of a tempered random variable. 
	\begin{definition}
		Let $K \colon \Omega \to (0, +\infty)$ be a measurable map. We say that $K$ is \emph{tempered} if
		\[
		\lim_{n\to \pm \infty} \frac 1 n\log K(\sigma^n \omega)=0, \quad \text{for $\mathbb P$-a.e. $\omega \in \Omega$.}
		\]
	\end{definition}
	
We will need the  following classical result (see~\cite[Proposition 4.3.3.]{Arnold}).
	\begin{proposition}\label{PA}
		Let $K \colon \Omega \to (0, +\infty)$ be a tempered random variable. For each $\epsilon >0$, there exists a tempered random variable $K_\epsilon \colon \Omega \to (1, +\infty)$ such that
		\[
		\frac{1}{K_\epsilon (\omega)} \le K(\omega) \le K_\epsilon (\omega) \quad \text{and} \quad K_\epsilon(\omega)e^{-\epsilon |n|} \le K_\epsilon (\sigma^n \omega) \le K_\epsilon (\omega) e^{\epsilon |n|},
		\]
		for $\mathbb P$-a.e. $\omega \in \Omega$ and $n \in \mathbb Z$.
	\end{proposition}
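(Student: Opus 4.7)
\textbf{Proof plan for Proposition \ref{PA}.} The natural approach is explicit construction: given the tempered $K$, pass to $\tilde K(\omega):=\max(K(\omega),1/K(\omega))$, which is $\ge 1$ everywhere and is still tempered because $\frac{1}{n}\log\tilde K(\sigma^n\omega)=\frac{1}{n}|\log K(\sigma^n\omega)|\to 0$. I would then define
\[
K_\epsilon(\omega):=\sup_{n\in\Z}\tilde K(\sigma^n\omega)\,e^{-\epsilon|n|}.
\]
Taking $n=0$ in the supremum immediately yields $K_\epsilon\ge\tilde K\ge 1$, hence $K_\epsilon\ge K$ and $1/K_\epsilon\le 1/\tilde K\le K$, which gives the two-sided bound in the proposition.

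The finiteness and measurability of $K_\epsilon$ would be handled next. For $\mathds P$-a.e.\ $\omega$, temperedness of $\tilde K$ yields some $C(\omega)<\infty$ with $\tilde K(\sigma^n\omega)\le C(\omega)e^{(\epsilon/2)|n|}$ for all $n\in\Z$, so
\[
K_\epsilon(\omega)\le C(\omega)\sup_{n\in\Z}e^{-(\epsilon/2)|n|}<+\infty.
\]
Measurability follows from the fact that $K_\epsilon$ is a countable supremum of measurable functions.

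The translation estimate then follows from a change of variable $m=n+k$ together with the reverse triangle inequality $|m|-|k|\le|m-k|\le|m|+|k|$:
\[
K_\epsilon(\sigma^k\omega)=\sup_{m\in\Z}\tilde K(\sigma^m\omega)e^{-\epsilon|m-k|}\le e^{\epsilon|k|}\sup_{m\in\Z}\tilde K(\sigma^m\omega)e^{-\epsilon|m|}=e^{\epsilon|k|}K_\epsilon(\omega),
\]
and symmetrically $K_\epsilon(\sigma^k\omega)\ge e^{-\epsilon|k|}K_\epsilon(\omega)$.

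The step I expect to require the most care is verifying that $K_\epsilon$ itself is tempered; the crude bound $K_\epsilon(\sigma^n\omega)\le K_\epsilon(\omega)e^{\epsilon|n|}$ only yields $\limsup\frac{1}{|n|}\log K_\epsilon(\sigma^n\omega)\le\epsilon$, which is not zero. To improve this, I would fix an arbitrarily small $\eta\in(0,\epsilon)$ and use the temperedness of $\tilde K$ to write $\tilde K(\sigma^j\omega)\le C_\eta(\omega)e^{\eta|j|}$ for all $j\in\Z$. Substituting $j=n+m$ in the definition of $K_\epsilon(\sigma^m\omega)$ and splitting the supremum over $n\ge 0$ and $n<0$, an elementary maximisation of $e^{\eta|n+m|-\epsilon|n|}$ in $n$ gives the bound $C_\eta(\omega)e^{\eta|m|}$, from which
\[
\limsup_{|m|\to\infty}\frac{1}{|m|}\log K_\epsilon(\sigma^m\omega)\le\eta.
\]
Since $\eta>0$ was arbitrary and $K_\epsilon\ge 1$, this yields $\lim_{m\to\pm\infty}\frac{1}{m}\log K_\epsilon(\sigma^m\omega)=0$, completing the proof.
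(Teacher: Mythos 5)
Your construction is correct and is in fact the standard proof of this result; the paper itself does not prove Proposition~\ref{PA} but simply cites \cite[Proposition 4.3.3]{Arnold}, where essentially the same supremum
\[
K_\epsilon(\omega)=\sup_{n\in\Z}\tilde K(\sigma^n\omega)e^{-\epsilon|n|}
\]
over the orbit is used. All the individual steps check out: the symmetrization via $\tilde K=\max(K,1/K)$ correctly preserves temperedness since $\log\tilde K=|\log K|$; the finiteness argument via a subexponential majorant $C(\omega)e^{(\epsilon/2)|n|}$ is sound; the translation bound follows from $|m|-|k|\le|m-k|\le|m|+|k|$ exactly as you write; and crucially, you identified and closed the gap that the crude two-sided slope bound only gives $\limsup\le\epsilon$, not temperedness. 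Your fix --- bounding $K_\epsilon(\sigma^m\omega)\le C_\eta(\omega)e^{\eta|m|}$ for arbitrary $\eta\in(0,\epsilon)$ via $\eta|n+m|-\epsilon|n|\le\eta|m|$ --- is precisely the right observation, and combined with $K_\epsilon\ge1$ it yields $\lim_{m\to\pm\infty}\frac1m\log K_\epsilon(\sigma^m\omega)=0$. The only (cosmetic) deviation from the stated range $(1,+\infty)$ is that your $K_\epsilon$ may equal $1$ when $K\equiv1$; replacing $\tilde K$ by $\max(\tilde K,e)$ at the outset removes this without affecting any estimate.
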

	
	\begin{proposition}\label{91}
		Let $\L=(\L_\omega)_{\omega \in \Omega}$ be a good cocycle of transfer operators. Furthermore, let $\Pi(\omega) \colon BV \to BV^0$, $\omega \in \Omega$ be a family of projections corresponding to the splitting~\eqref{split}, and let $\lambda_2<0$ be the second Lyapunov exponent of $\L$. For any $\epsilon\in(0,-\lambda_2)$, there exist  tempered random variables $D_i \colon \Omega \to [1, +\infty)$, $\in \{1, 2\}$ such that:
		\begin{itemize}
			\item for $\mathbb P$-a.e. $\omega \in \Omega$, $n\ge 0$ and $\phi \in BV$,
			\begin{equation}\label{436}
				\|\mathcal L_\omega^n \Pi(\omega) \phi \|_{BV } \le D_1(\omega)e^{-\lambda n} \|\phi\|_{BV},
			\end{equation}
			where we set $\lambda=-\lambda_2-\epsilon>0$;
			\item for $\mathbb P$-a.e. $\omega \in \Omega$, $n\ge 0$ and $\phi \in BV$,
			\begin{equation}\label{437}
				\|\mathcal L_\omega^n (I-\Pi(\omega)) \phi \|_{BV } \le D_2(\omega)e^{\epsilon n} \|\phi\|_{BV};
			\end{equation}
		\end{itemize}
	\end{proposition}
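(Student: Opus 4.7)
The plan is to exploit the concrete form of the projection $\Pi(\omega)$ afforded by Proposition~\ref{90}: since $Y_1(\omega)=\spn(v_\omega^0)$ with $\int_X v_\omega^0\, dm=1$ while $BV^0=\ker(\phi\mapsto\int_X\phi\, dm)$, the uniqueness of the splitting \eqref{split} forces
\[
\Pi(\omega)\phi=\phi-\Bigl(\int_X\phi\, dm\Bigr)v_\omega^0,\qquad (I-\Pi(\omega))\phi=\Bigl(\int_X\phi\, dm\Bigr)v_\omega^0,
\]
so that both $\|\Pi(\omega)\|_{BV\to BV}$ and $\|I-\Pi(\omega)\|_{BV\to BV}$ are bounded by $1+\|v_\omega^0\|_{BV}$. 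The estimate \eqref{437} then follows immediately from the equivariance $\L_\omega^n v_\omega^0=v_{\sigma^n\omega}^0$ supplied by Theorem~\ref{7}, which gives $\|\L_\omega^n(I-\Pi(\omega))\phi\|_{BV}\le \|v_{\sigma^n\omega}^0\|_{BV}\|\phi\|_{BV}$. What remains is the temperedness of $K(\omega):=\|v_\omega^0\|_{BV}$: since $v_\omega^0\in Y_1(\omega)$ and $\lambda_1=0$ (Proposition~\ref{zero}), Theorem~\ref{MET} yields $n^{-1}\log\|v_{\sigma^n\omega}^0\|_{BV}\to 0$ $\mathbb P$-a.s.\ as $n\to\infty$, and the analogous limit as $n\to-\infty$ is obtained either by an orbit-wise application of the MET on the $\sigma$-invariant full-measure set where the conclusion holds, or more directly via Birkhoff's theorem applied to the log-integrable Lasota--Yorke constant from~\eqref{weakLY}. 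Proposition~\ref{PA} then produces the tempered majorant $D_2(\omega)$.

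For \eqref{436} the real work is on the invariant subspace $BV^0$, where by Proposition~\ref{90} combined with Theorem~\ref{MET} the top Lyapunov exponent is $\lambda_2<0$. The plan is to upgrade the pointwise MET bound $\limsup_{n\to\infty}n^{-1}\log\|\L_\omega^n h\|_{BV}\le \lambda_2$, valid for each fixed $h\in BV^0$, into a $\phi$-uniform estimate with tempered constant. A clean route is to combine the decay of correlations \eqref{decayofcorrelations}, which for $h\in BV^0$ yields $\|\L_\omega^n h\|_1\le \|\L_\omega^n h\|_\infty\le Z(\omega)\rho^n\|h\|_{BV}$, with iteration of the Lasota--Yorke inequality~\eqref{strongLY} from time $n$ onwards: since $\int_\Omega\log\alpha^N\, d\mathbb P<0$ and $\log K^N\in L^1$, the resulting telescoping bound on $\var(\L_\omega^{n} h)$ is controlled by a tempered multiple of $\max(\rho^n,e^{-cn})\|h\|_{BV}$ for some $c>0$, giving a rate strictly better than $e^{(\lambda_2+\epsilon/2)n}$. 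Applying this with $h=\Pi(\omega)\phi$, absorbing the projection bound $1+K(\omega)$ into the prefactor, and applying Proposition~\ref{PA} one final time to convert the resulting measurable constant into a tempered $D_1(\omega)$ (absorbing the leftover $\epsilon/2$ slack into the exponential rate), yields \eqref{436} with $\lambda=-\lambda_2-\epsilon$.

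The main obstacle is precisely the passage from pointwise decay rates to a $\phi$-uniform tempered estimate on $BV^0$: one must lose only an arbitrarily small $\epsilon$ in the exponential rate while picking up only a tempered (not uniform) prefactor. This is where the quasi-compactness of $\L$ on $BV^0$, the strict Lyapunov gap $\lambda_2<0$, and the Lasota--Yorke inequalities \eqref{weakLY}--\eqref{strongLY} all play an essential role; the strategy closely parallels familiar tempered-envelope constructions in non-uniformly hyperbolic random dynamics (cf.~\cite[Chapter 4]{Arnold}).
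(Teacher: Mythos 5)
Your derivation of \eqref{437} is correct and matches the paper's: since $\Pi(\omega)\phi=\phi-(\int_X\phi\,dm)v_\omega^0$, one reduces to a tempered envelope for $\|v_{\sigma^n\omega}^0\|_{BV}$, which is tempered because $v_\omega^0$ generates the top Oseledets space with exponent $\lambda_1=0$. (Your alternative ``backward via Birkhoff on $\log C$'' does not work: \eqref{weakLY} only bounds $\|v_{\sigma^{-n}\omega}^0\|_{BV}$ from \emph{below}, not above, so you must stick with the two-sided temperedness supplied by the MET.)

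For \eqref{436}, however, there is a genuine gap, and it is precisely at the point where you assert that the combined decay-of-correlations + Lasota--Yorke rate is ``strictly better than $e^{(\lambda_2+\epsilon/2)n}$.'' Your scheme yields a bound of the form $\|\L_\omega^n\rvert_{BV^0}\|_{BV}\le C(\omega)\max\bigl(\rho,\,e^{\frac1N\int_\Omega\log\alpha^N d\mathbb P}\bigr)^{n}$, up to tempered corrections. But by Kingman's theorem $\lambda_2=\lim_n n^{-1}\log\|\L_\omega^n\rvert_{BV^0}\|_{BV}$, so any such a priori upper bound necessarily satisfies $\max\bigl(\log\rho,\,\frac1N\int\log\alpha^N\bigr)\ge\lambda_2$ — the inequality goes the \emph{wrong} way for your claim, and for small $\epsilon$ the target rate $e^{(\lambda_2+\epsilon)n}$ is strictly sharper than what your route produces. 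In addition, the prefactor you manufacture contains the constant $Z(\omega)$ from \eqref{decayofcorrelations}, and Theorem~\ref{7} gives no information about $\log$-integrability or temperedness of $Z$; so even the temperedness of your $D_1$ is not secured, and Proposition~\ref{PA} cannot be invoked.

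The paper avoids both issues by citing the MET-based tempered-envelope result~\cite[Proposition~3.2]{BD}: it sets $D_1(\omega)=(1+\|v_\omega^0\|_{BV})\sup_{n\ge0}\bigl(\|\L_\omega^n\rvert_{BV^0}\|_{BV}\,e^{\lambda n}\bigr)$, whose finiteness and temperedness follow directly from the Kingman limit $\lambda_2$ of the restricted cocycle and the standard envelope argument (as in Proposition~\ref{PA}), without ever appealing to the decay of correlations or the specific Lasota--Yorke constants. This delivers the sharp rate $\lambda_2+\epsilon$ for arbitrary $\epsilon>0$ and a tempered constant, which is exactly what is claimed. (It is true that for the downstream applications in this paper any fixed $\lambda>0$ would do, so your weaker rate would not break anything later — but it does not prove the proposition as stated.)
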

	
	\begin{proof}
		This is a direct consequence of Propositions~\ref{zero}, \ref{90} and~\cite[Proposition 3.2.]{BD}. Indeed, the proof of~\cite[Proposition 3.2]{BD} implies that we can take 
\begin{equation}\label{D111}
D_1(\omega)= (1+\|v_\omega^0\|_{BV} )\sup_{n\ge 0} (\|\L_\omega^n \rvert_{BV^0} \|_{BV} e^{\lambda n}),
\end{equation}
and
\begin{equation}\label{D22}
D_2(\omega)=
\sup_{n\ge 0} (\|v_{\sigma^n \omega}^0\|_{BV} e^{-\epsilon n}).
\end{equation}
	\end{proof}

\begin{remark}
	Observe that:
	\begin{itemize}
		\item One can replace $D_1$ in~\eqref{436} and $D_2$ in~\eqref{437} by $D=\max \{D_1, D_2\}$. However, in order to construct adapted norms, it is convenient to work with $D_1$ and $D_2$ separately.
		\item For $\mathbb P$-a.e. $\omega \in \Omega$, 
		\begin{equation}\label{D2}
		D_2(\omega) e^\epsilon \le D_2(\sigma \omega).
		\end{equation}
	\end{itemize}
\end{remark}
	
	\section{Twisted transfer operator cocycles}
	
	\subsection{Adapted norms}
	
	Throughout this section, we take a good cocycle of transfer operators $\L=(\L_\omega)_{\omega \in \Omega}$. Let $\epsilon, \lambda>0$, $D_i\colon \Omega \to [1, +\infty)$, $i=1,2$  and $\Pi(\omega)$, $\omega \in \Omega$
	be as in statement of Proposition~\ref{91}. 
	For $\mathbb P$-a.e. $\omega \in \Omega$ and $\phi \in BV$, let
	\[
	\begin{split}
		\|\phi \|_\omega &=\sup_{n\ge 0}(e^{\lambda n}\|\mathcal L_\omega^n \Pi(\omega) \phi \|_{BV })+\frac{1}{D_2(\omega) }\sup_{n\ge 0} (e^{-\epsilon n}\|\mathcal L_\omega^n (\phi- \Pi(\omega) \phi) \|_{BV }) \\
		&\phantom{=}+\bigg |\int_X \phi \, dm \bigg |.
	\end{split}
	\]
Observe that $\omega \mapsto \| \phi \|_\omega$ is measurable for each $\phi \in BV$. 
	The main properties of adapted norms are collected in the following result.
	\begin{proposition}\label{AN}
		The following holds:
		\begin{enumerate}
			\item There exists a tempered random variable $K \colon \Omega \to [1, +\infty)$ such that, for $\mathbb P$-a.e. $\omega \in \Omega$ and $\phi \in BV$, 
			\begin{equation}\label{LN}
				\frac{1}{K(\omega)} \|\phi \|_{BV} \le \| \phi \|_\omega \le K(\omega) \| \phi\|_{BV}.
			\end{equation}
			\item For $\mathbb P$-a.e. $\omega \in \Omega$ and $\phi \in BV$,
			\begin{equation}\label{an1}
				\|\phi \|_1 \le \|\phi\|_\omega.
			\end{equation}
			\item For $\mathbb P$-a.e. $\omega \in \Omega$ and $\phi \in BV$,
			\begin{equation}\label{an2}
				\|\mathcal L_\omega^n \Pi(\omega) \phi \|_{\sigma^n \omega } \le e^{-\lambda n} \|\phi\|_{\omega}.
			\end{equation}
			\item There exists a constant $C'>0$ such that for $\mathbb P$-a.e. $\omega \in \Omega$ and $\phi \in BV$:
			\begin{equation}\label{an3}
				\|\mathcal L_\omega \phi \|_{\sigma \omega} \le C'\|\phi \|_\omega.
			\end{equation}
			\item We have that 
			\begin{equation}\label{an4}
				\esssup \|v_\omega^0 \|_{\omega}<+\infty.
			\end{equation}
		\end{enumerate}
	\end{proposition}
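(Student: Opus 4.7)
My strategy is to verify the five assertions by direct inspection of the definition of $\|\cdot\|_\omega$, exploiting two structural facts that I would record at the outset. First, since the projection $\Pi(\omega):BV\to BV^0$ is along $Y_1(\omega)=\operatorname{span}(v_\omega^0)$ and $\int v_\omega^0\,dm=1$, one has the explicit formulas
\begin{equation*}
\Pi(\omega)\phi=\phi-\Bigl(\int_X\phi\,dm\Bigr)v_\omega^0,\qquad (I-\Pi(\omega))\phi=\Bigl(\int_X\phi\,dm\Bigr)v_\omega^0.
\end{equation*}
Second, since $\mathcal L_\omega$ preserves the integral, the splitting \eqref{split} is $\mathcal L$-equivariant: $\mathcal L_\omega \Pi(\omega)=\Pi(\sigma\omega)\mathcal L_\omega$ and $\mathcal L_\omega v_\omega^0=v_{\sigma\omega}^0$. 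I would then treat (1)--(2) by comparison with $\|\cdot\|_{BV}$, and (3)--(4) by exploiting the clean behaviour of the two suprema under shifts.

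For (1) the upper bound comes from evaluating each supremum via Proposition~\ref{91}: the first sup is bounded by $D_1(\omega)\|\phi\|_{BV}$, the second by $D_2(\omega)\|\phi\|_{BV}$ (which, divided by $D_2(\omega)$, yields $\|\phi\|_{BV}$), and $|\int \phi\, dm|\le \|\phi\|_{BV}$. Conversely, taking $n=0$ in each supremum gives $\|\Pi(\omega)\phi\|_{BV}\le \|\phi\|_\omega$ and $\|(I-\Pi(\omega))\phi\|_{BV}\le D_2(\omega)\|\phi\|_\omega$, from which $\|\phi\|_{BV}\le(1+D_2(\omega))\|\phi\|_\omega$ by the triangle inequality. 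One then takes $K(\omega):=\max(D_1(\omega)+2,\,1+D_2(\omega))$, tempered because $D_1,D_2$ are. For (2), noting $\|v_\omega^0\|_1=1$, the decomposition above yields $\|\phi\|_1\le \|\Pi(\omega)\phi\|_{BV}+|\int\phi\,dm|\le \|\phi\|_\omega$.

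For (3), observe that $\mathcal L_\omega^n\Pi(\omega)\phi\in BV^0$, so $\Pi(\sigma^n\omega)$ fixes it and $(I-\Pi(\sigma^n\omega))$ annihilates it; moreover $\int \mathcal L_\omega^n\Pi(\omega)\phi\,dm=0$. Hence
\begin{equation*}
\|\mathcal L_\omega^n\Pi(\omega)\phi\|_{\sigma^n\omega}
=\sup_{k\ge 0}e^{\lambda k}\|\mathcal L_\omega^{n+k}\Pi(\omega)\phi\|_{BV}
=e^{-\lambda n}\sup_{k\ge 0}e^{\lambda(n+k)}\|\mathcal L_\omega^{n+k}\Pi(\omega)\phi\|_{BV}\le e^{-\lambda n}\|\phi\|_\omega.
\end{equation*}
For (4), using equivariance one checks $\Pi(\sigma\omega)\mathcal L_\omega\phi=\mathcal L_\omega\Pi(\omega)\phi$ and $(I-\Pi(\sigma\omega))\mathcal L_\omega\phi=\mathcal L_\omega(I-\Pi(\omega))\phi$. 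The first supremum in $\|\mathcal L_\omega\phi\|_{\sigma\omega}$ is then bounded by $e^{-\lambda}\|\phi\|_\omega$ by the same index-shift as in (3). The \emph{main obstacle} is the second supremum, which gains a factor $e^\epsilon D_2(\omega)/D_2(\sigma\omega)$ after an index shift; this is precisely where one needs the inequality $D_2(\omega)e^\epsilon\le D_2(\sigma\omega)$ from \eqref{D2} (which is exactly why $D_2$ was defined via \eqref{D22}) to absorb the bad factor to~$\le 1$. Combined with $|\int\mathcal L_\omega\phi\,dm|=|\int\phi\,dm|\le\|\phi\|_\omega$, this gives (4) with $C'=e^{-\lambda}+2$.

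Finally, (5) is a short computation: $\Pi(\omega)v_\omega^0=0$ (so the first supremum vanishes), $(I-\Pi(\omega))v_\omega^0=v_\omega^0$ so $\mathcal L_\omega^n(I-\Pi(\omega))v_\omega^0=v_{\sigma^n\omega}^0$, and
\begin{equation*}
\frac{1}{D_2(\omega)}\sup_{n\ge 0}e^{-\epsilon n}\|v_{\sigma^n\omega}^0\|_{BV}=1
\end{equation*}
by the very definition \eqref{D22} of $D_2$. Adding $|\int v_\omega^0\,dm|=1$ gives $\|v_\omega^0\|_\omega=2$, uniformly in $\omega$. Thus the delicate design choices in the definition of $\|\cdot\|_\omega$---the division by $D_2(\omega)$ in the second supremum and the use of \eqref{D22} for $D_2$---are precisely what is needed to turn (4) and (5) from temperedness statements into uniform bounds.
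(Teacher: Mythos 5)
Your proposal is correct and follows essentially the same route as the paper: evaluate the three summands of $\|\cdot\|_\omega$ term by term, use \eqref{436}–\eqref{437} for the upper bound in (1) and the $n=0$ terms for the lower bound, exploit the equivariance $\mathcal L_\omega\Pi(\omega)=\Pi(\sigma\omega)\mathcal L_\omega$ for the index shifts in (3)–(4), use \eqref{D2} to absorb the $D_2$ ratio, and observe that $\Pi(\omega)v_\omega^0=0$ together with the definition \eqref{D22} of $D_2$ gives $\|v_\omega^0\|_\omega=2$. The only deviations are cosmetic: your lower-bound constant $1+D_2(\omega)$ can be tightened to $D_2(\omega)$ (since $D_2\ge 1$, one can pull $1/D_2(\omega)$ out of both $n=0$ terms before applying the triangle inequality, as the paper does), and in (4) bounding each supremum by the matching component of $\|\phi\|_\omega$ rather than the full norm gives $C'=1$ instead of your $e^{-\lambda}+2$; neither affects the validity of the statement.
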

	
	\begin{proof}
		It follows from~\eqref{436}, \eqref{437} and $\| \cdot \|_1 \le \| \cdot \|_{BV}$ that
		\[
		\|\phi \|_\omega \le (D_1(\omega)+2) \|\phi \|_{BV},
		\]
		for $\mathbb P$-a.e. $\omega \in \Omega$ and $\phi \in BV$.
		Moreover, 
		\[
		\|\phi \|_{\omega} \ge \|\Pi(\omega)\phi \|_{BV}+\frac{1}{D_2(\omega)}\|\phi-\Pi(\omega)\phi\|_{BV} \ge \frac{1}{D_2(\omega)}\|\phi \|_{BV},
		\]
		for $\mathbb P$-a.e. $\omega \in \Omega$ and $\phi \in BV$. From these two estimates we conclude that~\eqref{LN} holds with 
		\begin{equation}\label{K11}
		K(\omega)=\max \{D_1(\omega)+2, D_2(\omega) \},
		\end{equation}
		which is clearly tempered (since $D_1$ and $D_2$ are tempered).
		
	Furthermore, noting that $\| \phi- \Pi(\omega) \phi \|_1=\left|\int_X \phi \, dm\right|$,  for $\mathbb P$-a.e. $\omega \in \Omega$ and $\phi \in BV$ we have that 
		\[
		\begin{split}
			\|\phi \|_\omega &\ge \sup_{n\ge 0}(\|\mathcal L_\omega^n \Pi(\omega) \phi \|_{BV } \cdot e^{\lambda n})+\| \phi- \Pi(\omega) \phi \|_1 \\
			&\ge \|\Pi(\omega)\phi \|_{BV}+\| \phi- \Pi(\omega) \phi \|_1 \\
			&\ge \|\Pi(\omega)\phi \|_{1}+\| \phi- \Pi(\omega) \phi \|_1  \\
			&\ge \|\phi\|_1, 
		\end{split}
		\]
		which yields~\eqref{an1}.
		
		Next, observe that 
		\[
		\begin{split}
			\|\mathcal L_\omega^n \Pi(\omega) \phi \|_{\sigma^n \omega } &=\sup_{m\ge 0}(\|\mathcal L_{\sigma^n \omega}^m \mathcal L_\omega^n \Pi(\omega) \phi \|_{BV}e^{\lambda m}) \\
			&=\sup_{m\ge 0}(\| \mathcal L_\omega^{n+m}\Pi(\omega) \phi \|_{BV}e^{\lambda m}) \\
			&=e^{-\lambda n} \sup_{m\ge 0}(\| \mathcal L_\omega^{n+m} \Pi(\omega) \phi \|_{BV}e^{\lambda (m+n)}) \\
			&\le e^{-\lambda n} \|\phi\|_{\omega},
		\end{split}
		\]
		for $\mathbb P$-a.e. $\omega \in \Omega$ and $\phi \in BV$. Consequently, \eqref{an2} holds. 
		
		In addition, for $\mathbb P$-a.e. $\omega \in \Omega$ and $\phi \in BV$ one has (using~\eqref{D2}) that 
		\[
		\begin{split}
			\|\mathcal L_\omega \phi \|_{\sigma \omega} &=\sup_{n\ge 0}(\|\mathcal L_\omega^{n+1} \Pi(\omega) \phi \|_{BV } \cdot e^{\lambda n})\\
			&\phantom{=}+\frac{1}{D_2(\sigma \omega)}\cdot\sup_{n\ge 0} (\|\mathcal L_\omega^{n+1} (\phi- \Pi(\omega) \phi) \|_{BV } \cdot e^{-\epsilon n})\\
			&\phantom{=}+\bigg | \int_X \L_\omega \phi  \, dm \bigg | \\
			&\le e^{-\lambda}\sup_{n\ge 0}(\|\mathcal L_\omega^{n+1} \Pi(\omega) \phi \|_{BV } \cdot e^{\lambda (n+1)})\\
			&\phantom{\le}+e^{-\epsilon}\frac{1}{D_2(\omega)}\cdot\sup_{n\ge 0} (\|\mathcal L_\omega^{n+1} (\phi- \Pi(\omega) \phi) \|_{BV } \cdot e^{-\epsilon (n+1)})\\
			&\phantom{=}+\bigg| \int_X \phi \, dm\bigg | \\
			&\le \|\phi \|_\omega,
		\end{split}
		\]
		and therefore~\eqref{an3} holds. 
		
		Finally, \eqref{D22} implies that 
		\[
		\begin{split}
			\|v_\omega^0\|_\omega &=\frac{1}{D_2(\omega) } \cdot\sup_{n\ge 0} (\|\mathcal L_\omega^n v_\omega^0 \|_{BV } \cdot e^{-\epsilon n})+\bigg | \int_X v_\omega^0 \, dm  \bigg | \\
&=\frac{1}{D_2(\omega) } \cdot\sup_{n\ge 0} (\| v_{\sigma^n \omega}^0 \|_{BV } \cdot e^{-\epsilon n})+1 \\
&=2,
		\end{split}
		\]
	for $\mathbb P$-a.e. $\omega \in \Omega$,	 which yields~\eqref{an4}. The proof of the proposition is completed. 
	\end{proof}
	
	\begin{remark}
		Our construction of adapted norms is somewhat similar to the classical construction due to Pesin (see for example~\cite[Section 3]{BD}). However, our construction is different since besides transforming~\eqref{436} and~\eqref{437} into uniform type conditions (see~\eqref{an2} and~\eqref{an3}), an important feature of our adapted norms is given by properties~\eqref{an1} and~\eqref{an4}. As we will see later on, these properties will be  of central importance for our arguments. 
	\end{remark}
	
	\begin{remark}\label{formula}
By putting~\eqref{D111} and~\eqref{D22} in~\eqref{K11}, we obtain an explicit expression for the random variable $K$. 
	\end{remark}
	Let us now introduce  ``dual adapted norms'': we set, for $\ell\in BV^*$ and $\mathbb P$-a.e. $\omega \in \Omega$,
	\begin{equation}\label{def:dualadaptednorm}
		\|\ell\|^*_\omega:=\inf\{C>0: |\ell(\phi)|\le C\|\phi\|_\omega \ \text{for $\phi \in BV$}\}.
	\end{equation}
Observe that $\omega \mapsto \|\ell \|_\omega^*$ is measurable for each $\ell \in BV^*$. 
	The dual adapted norms $\|\cdot \|_\omega^*$ satisfy properties similar to the adapted norms $\|\cdot \|_\omega$. We summarize them in the following proposition.
	\begin{proposition}
		The following holds:
		\begin{enumerate}
			\item For $\mathbb P$-a.e $\omega\in\Omega$ and $\ell\in BV^*$,
			\begin{equation}\label{DLN}
				\frac{1}{K(\omega)}\|\ell\|_{BV^*}\le\|\ell\|_\omega^*\le K(\omega)\|\ell\|_{BV^*},
			\end{equation}
			with $K(\omega)$ is as in \eqref{LN}.
			\item For  $\mathbb P$-a.e $\omega\in\Omega$ and  $\ell\in L^\infty(X)$,\footnote{we identify $\ell$ with the functional $\phi \to \int_X\phi\ell \, dm$ on $BV$}
			\begin{equation}\label{dan1}
				\|\ell\|_\omega^*\le \|\ell\|_\infty.
			\end{equation}
			\item  For $\mathbb P$-a.e $\omega\in\Omega$ and  $\ell\in BV^*$,
			\begin{equation}\label{dan3}
				\|\L_\omega^*\ell\|^*_\omega \le C'\|\ell\|_{\sigma\omega}^*,
			\end{equation}
			where $C'$ as in~\eqref{an3}.
			\item For $\mathbb P$-a.e $\omega\in\Omega$,  $\ell\in BV^*$ and  $n\ge 0$,
			\begin{equation}\label{dan2}
				\|(\L_\omega^n)^*\Pi^*(\sigma^n\omega)\ell\|^*_{\omega}\le e^{-\lambda n}\|\ell\|_{\sigma^n\omega}^*,
			\end{equation}
			where $\lambda$ is given by Proposition \ref{91} and $\Pi^*(\omega)\ell:=\ell-\ell(v^0_\omega)m$.
			\item The Lebesgue measure $m$ satisfies, for $\mathbb P$-a.e $\omega\in\Omega$
			\begin{equation}\label{eq:goodLebesgue}
				\|m\|^*_\omega\le 1.
			\end{equation}
			In particular, $\esssup_{\omega\in\Omega}\|m\|_\omega^*<+\infty.$
		\end{enumerate}
	\end{proposition}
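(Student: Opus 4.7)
The plan is to obtain each of the five statements by dualizing the corresponding assertion of Proposition~\ref{AN} through the defining formula \eqref{def:dualadaptednorm}. In other words, for every $\ell \in BV^*$ and any admissible test vector $\phi \in BV$, I will estimate $|\ell(\phi)|$ (or $|(\mathcal L_\omega^*\ell)(\phi)|$, etc.) by a product of $\|\ell\|_\bullet^*$ and $\|\phi\|_\bullet$, and then take the infimum over $\phi$ to read off the desired bound on the dual norm.

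First I would dispatch items (1), (2) and (5). For (1), the equivalence \eqref{LN} immediately yields
\[
|\ell(\phi)|\le \|\ell\|_{BV^*}\|\phi\|_{BV}\le K(\omega)\|\ell\|_{BV^*}\|\phi\|_\omega
\]
and, symmetrically, $\|\ell\|_{BV^*}\le K(\omega)\|\ell\|_\omega^*$, giving \eqref{DLN}. For (2), any $\ell\in L^\infty(X)$ satisfies $|\ell(\phi)|\le\|\ell\|_\infty\|\phi\|_1$, and \eqref{an1} turns this into $|\ell(\phi)|\le\|\ell\|_\infty\|\phi\|_\omega$, proving \eqref{dan1}; item (5) and \eqref{eq:goodLebesgue} are the special case $\ell=m$. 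For (3), using the definition of the adjoint and \eqref{an3},
\[
|(\mathcal L_\omega^*\ell)(\phi)|=|\ell(\mathcal L_\omega\phi)|\le\|\ell\|_{\sigma\omega}^*\|\mathcal L_\omega\phi\|_{\sigma\omega}\le C'\|\ell\|_{\sigma\omega}^*\|\phi\|_\omega,
\]
which yields \eqref{dan3}.

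The main (but still straightforward) step is item (4). Here I would first verify algebraically that $\Pi^*(\omega)$, as defined in the statement, coincides with the Banach-space adjoint of the projection $\Pi(\omega)\colon BV\to BV^0$, $\Pi(\omega)\phi=\phi-(\int_X\phi\,dm)v_\omega^0$: indeed $\Pi(\omega)^*\ell(\phi)=\ell(\phi)-m(\phi)\ell(v_\omega^0)=(\ell-\ell(v_\omega^0)m)(\phi)$. Second, I would check the equivariance relation $\mathcal L_\omega^n\Pi(\omega)=\Pi(\sigma^n\omega)\mathcal L_\omega^n$, which follows at once from $\int_X\mathcal L_\omega\phi\,dm=\int_X\phi\,dm$ and $\mathcal L_\omega v_\omega^0=v_{\sigma\omega}^0$. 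Combining these two facts gives, by taking adjoints, $(\mathcal L_\omega^n)^*\Pi^*(\sigma^n\omega)=\Pi(\omega)^*(\mathcal L_\omega^n)^*$, so that for every $\phi\in BV$
\[
|(\mathcal L_\omega^n)^*\Pi^*(\sigma^n\omega)\ell(\phi)|=|\ell(\mathcal L_\omega^n\Pi(\omega)\phi)|\le\|\ell\|_{\sigma^n\omega}^*\|\mathcal L_\omega^n\Pi(\omega)\phi\|_{\sigma^n\omega}.
\]
Applying the uniform contraction \eqref{an2} to the last factor gives the bound $e^{-\lambda n}\|\ell\|_{\sigma^n\omega}^*\|\phi\|_\omega$, from which \eqref{dan2} follows by taking the supremum over $\phi$.

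I do not expect any genuine obstacle: the only subtle point is the compatibility between the algebraic $\Pi^*(\omega)$ defined in the statement and the Banach-space adjoint $\Pi(\omega)^*$, together with the equivariance of the splitting, both of which reduce to elementary bookkeeping with the decomposition $BV=\operatorname{span}(v_\omega^0)\oplus BV^0$ of Proposition~\ref{90}. Measurability of $\omega\mapsto\|\ell\|_\omega^*$, already remarked in the text, poses no additional problem as it follows from writing the supremum in \eqref{def:dualadaptednorm} over a countable dense subset of $BV$.
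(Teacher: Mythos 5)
Your proposal is correct and follows essentially the same route as the paper: each dual estimate is obtained by testing $\ell$ against $\phi\in BV$ and invoking the corresponding item of Proposition~\ref{AN} (namely \eqref{LN}, \eqref{an1}, \eqref{an3}, \eqref{an2}, and \eqref{an1} again). The only difference is cosmetic — you spell out the algebraic identification of $\Pi^*(\omega)$ with $\Pi(\omega)^*$ and the equivariance $\L_\omega^n\Pi(\omega)=\Pi(\sigma^n\omega)\L_\omega^n$, which the paper leaves implicit when asserting $(\L_\omega^n)^*\Pi^*(\sigma^n\omega)\ell(\phi)=\ell(\L_\omega^n\Pi(\omega)\phi)$.
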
	
	
	\begin{proof}
		Using~\eqref{LN} we have that for $\mathbb P$-a.e $\omega\in\Omega$, $\ell\in BV^*$ and $\phi \in BV$,
		\[
		|\ell (\phi) | \le \| \ell \|_{BV^*} \cdot \| \phi \|_{BV}\le K(\omega) \| \ell \|_{BV^*} \cdot \| \phi \|_{\omega},
		\]
		which implies the second inequality in~\eqref{DLN}. The first inequality can be  established similarly.
		
		Moreover, using~\eqref{an1} we have that 
		\[
		|\ell (\phi) | \le \| \ell \|_\infty \cdot \| \phi \|_1 \le \| \ell \|_\infty \cdot \| \phi \|_\omega, 
		\]
		for $\mathbb P$-a.e $\omega\in\Omega$, $\ell\in L^\infty(X)$ and $\phi \in BV$. Hence, \eqref{dan1} holds.
		
		Furthermore, it follows from~\eqref{an3} that 
		\[
		|\L_\omega^*\ell(\phi) |=|\ell (\L_\omega \phi )| \le \| \ell \|_{\sigma \omega}^* \cdot \| \L_\omega \phi \|_{\sigma \omega} \le C'\| \ell \|_{\sigma \omega}^* \cdot  \| \phi \|_\omega,
		\]
		for $\mathbb P$-a.e $\omega\in\Omega$, $\ell\in BV^*$ and $\phi \in BV$. We conclude that~\eqref{dan3} holds. 
		
		On the other hand, by~\eqref{an2} we have that 
		\begin{align*}
			| (\L_\omega^n)^*\Pi^*(\sigma^n\omega)\ell (\phi) | &=|\ell(\L_\omega^n \Pi(\omega) \phi)|\\  
			&\le \| \ell \|_{\sigma^n \omega}^* \cdot \| \L_\omega^n \Pi(\omega) \phi \|_{\sigma^n \omega}\\ 
			&\le e^{-\lambda n}\| \ell \|_{\sigma^n \omega}^* \cdot \| \phi \|_\omega, 
		\end{align*}
		for a.e $\omega\in\Omega$, $\ell\in BV^*$ and $\phi \in BV$. Thus, we proved~\eqref{dan2}.
		
		Finally, \eqref{an1} implies that 
		\[
		|m(\phi)| =\bigg | \int_X \phi \, dm \bigg | \le \| \phi \|_1 \le \| \phi \|_\omega, 
		\]
		for $\mathbb P$-a.e. $\omega \in \Omega$ and $\phi \in BV$ which implies~\eqref{eq:goodLebesgue}.
	\end{proof}
	
	\subsection{Twisted transfer operators}
	We start by fixing an observable. Let $\psi \colon \Omega \times X \to \R$ be a measurable map such that the following conditions hold:
	\begin{itemize}
		\item $\psi_\omega:=\psi(\omega, \cdot ) \in BV$ for $\omega \in \Omega$;
		\item we have that \begin{equation}\label{eq:condobservable}
			\esssup_{\omega \in \Omega} \bigg (K( \omega)^2  \|\psi_\omega\|_{BV} \bigg )<+\infty, 
		\end{equation}
		where $K$ is given by Proposition~\ref{AN};
		\item for $\mathbb P$-a.e. $\omega \in \Omega$,
		\begin{equation}\label{center}
			\int_X \psi_\omega v_\omega^0 \, dm=0.
		\end{equation}
	\end{itemize}
	
	\begin{remark}\label{r0}
		Let us make a few comments on those assumptions:
		\begin{itemize}
			\item Since $K(\omega) \ge 1$, it follows  from~\eqref{eq:condobservable} that
			\begin{equation}\label{b}
				\esssup_{\omega \in \Omega} \|\psi_\omega \|_{BV} <+\infty.
			\end{equation}
			\item We observe that~\eqref{eq:condobservable} is closed under centering. More precisely, if $\psi$ satisfies~\eqref{eq:condobservable} than so does $\psi'$ given by
			\[
			\psi_\omega'=\psi_\omega - \int_X \psi \, d\mu_\omega \quad \omega \in \Omega.
			\]
			\item In the setting of~\cite{D}, \eqref{436} and~\eqref{437} hold with $D$ a constant (see~\cite[Definition 2.8]{D} and~\cite[Lemma 2.11.]{D}). Moreover, $\esssup_{\omega \in \Omega} \|v_\omega^0\|_{BV}<+\infty$. Consequently, it follows from the proof of Proposition~\ref{AN} that~\eqref{LN} holds with $K$ being a constant. Therefore, in this setting~\eqref{eq:condobservable} and~\eqref{b} are equivalent. 
		\end{itemize}
	\end{remark}
	Next, we describe examples of observables satisfying \eqref{eq:condobservable}.
	\begin{example} \label{EX}
		\begin{itemize}
			\item Let $\psi:\Omega\times X\to \R$ be  such that \eqref{center} and~\eqref{b} hold. Then the `rescaled' observable $\psi_K:=K^{-2}\psi$ satisfies \eqref{eq:condobservable} and~\eqref{center}.
			\item  Due to the somewhat complicated expression for $K$ (see Remark~\ref{formula}), it is of interest to describe alternative ways of constructing observables satisfying~\eqref{eq:condobservable}. In order to do so, 
take an arbitrary $\delta>0$. By Proposition~\ref{PA}, there exists a random variable $K_\delta \colon \Omega \to [1, +\infty)$ such that 
			\[
			K(\omega)\le K_\delta (\omega) \quad \text{and} \quad K_\delta (\sigma^{-n} \omega)\le e^{\delta n} K_\delta (\omega),
			\]
			for $\mathbb P$-a.e. $\omega \in \Omega$ and $n\in \N$.  Take an arbitrary $C>0$. Furthermore, choose $T>0$ such that $\mathbb P(\Omega_T')>0$, where
			\[
			\Omega_T':= \{ \omega \in \Omega : K_\delta (\omega) \le T \}. 
			\]
We note that this holds for all $T$ sufficiently large.
			Set $\Omega_T^0=\Omega_T'$ and 
			for $n\in \N$, let $\Omega_T^n=\sigma^{-n}(\Omega_T')\setminus \cup_{k=0}^{n-1} \sigma^{-k}(\Omega_T')$. Clearly, $\Omega_T^n \cap \Omega_T^m=\emptyset$ for $n\neq m$. Moreover, the ergodicity of $\sigma$ implies that 
			$\mathbb P(\cup_{n=0}^\infty \Omega_T^n)=1$. If we choose $\psi$ so that 
			\[
			\|\psi_\omega \|_{BV} \le \frac{C}{T^2} e^{-2\delta n} \quad \text{for $\omega \in \Omega_T^n$ and  $n\ge 0$,}
			\]
			we have that 
			\[
			\esssup_{\omega \in \Omega} \bigg (K( \omega)^2  \|\psi_\omega\|_{BV} \bigg ) \le C.
			\]
			In particular, \eqref{eq:condobservable} holds. By centering our observable $\psi$, we can ensure that~\eqref{center} also holds (see Remark~\ref{r0}). 
		\end{itemize}
	\end{example}

	For $\theta \in \mathbb C$ and $\omega \in \Omega$, set
	\[
	\L_\omega^\theta \phi=\L_\omega (e^{\theta \psi_\omega} \phi), \quad \phi \in BV.
	\]
	
	\begin{lemma}\label{24}
		There exists a constant $C''>0$ such that for $|\theta |\le 1$, $\mathbb P$-a.e. $\omega \in \Omega$ and $\phi \in BV$,
		\[
		\|\L_\omega^\theta \phi \|_{\sigma \omega} \le C'' \|\phi \|_\omega. 
		\]
	\end{lemma}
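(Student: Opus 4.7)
The plan is to reduce to bounding the pre-composed quantity $\|e^{\theta\psi_\omega}\phi\|_\omega$ via the one-step estimate \eqref{an3}, and then use the multiplicative structure of $BV$ together with the key quantitative hypothesis \eqref{eq:condobservable} on $\psi$ to absorb an otherwise $\omega$-dependent factor into a genuine constant.

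More concretely, I would first apply \eqref{an3} to $e^{\theta\psi_\omega}\phi \in BV$ to get
\[
\|\mathcal L_\omega^\theta \phi\|_{\sigma\omega} = \|\mathcal L_\omega (e^{\theta\psi_\omega}\phi)\|_{\sigma\omega} \le C' \|e^{\theta\psi_\omega}\phi\|_\omega,
\]
so it suffices to find a constant $M$ such that $\|e^{\theta\psi_\omega}\phi\|_\omega \le M\|\phi\|_\omega$ for $|\theta|\le 1$ and $\mathbb P$-a.e.\ $\omega$. Writing $e^{\theta\psi_\omega}\phi = \phi + (e^{\theta\psi_\omega}-1)\phi$ and using the triangle inequality together with the comparison \eqref{LN} on both sides, I get
\[
\|e^{\theta\psi_\omega}\phi\|_\omega \le \|\phi\|_\omega + K(\omega) \|(e^{\theta\psi_\omega}-1)\phi\|_{BV} \le \|\phi\|_\omega + C_{\var} K(\omega)^2 \|e^{\theta\psi_\omega}-1\|_{BV} \cdot \|\phi\|_\omega,
\]
where in the last step I used the Banach algebra property of $BV$ (from (V3) and (V8)) and \eqref{LN} again to pass from $\|\phi\|_{BV}$ back to $\|\phi\|_\omega$.

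The remaining task is to estimate $\|e^{\theta\psi_\omega}-1\|_{BV}$. Using \eqref{b} and (V3), the quantity $R := \esssup_\omega \|\psi_\omega\|_\infty$ is finite, so for $|\theta|\le 1$ the pointwise estimate $|e^{\theta\psi_\omega(x)}-1| \le e^R |\psi_\omega(x)|$ yields $\|e^{\theta\psi_\omega}-1\|_1 \le e^R \|\psi_\omega\|_1$. For the variation part, I would apply (V9) to the $C^1$ function $h(t)=e^{\theta t}-1$ on $[-R,R]$, giving $\var(e^{\theta\psi_\omega}-1) \le |\theta|e^R \var(\psi_\omega) \le e^R \var(\psi_\omega)$. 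Altogether
\[
\|e^{\theta\psi_\omega}-1\|_{BV} \le e^R \|\psi_\omega\|_{BV},
\]
uniformly for $|\theta|\le 1$ and $\mathbb P$-a.e.\ $\omega$.

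Combining the last two displays, I find
\[
\|e^{\theta\psi_\omega}\phi\|_\omega \le \Bigl(1 + C_{\var} e^R \cdot K(\omega)^2 \|\psi_\omega\|_{BV}\Bigr)\|\phi\|_\omega,
\]
and here the hypothesis \eqref{eq:condobservable} is precisely what makes $K(\omega)^2 \|\psi_\omega\|_{BV}$ essentially bounded in $\omega$; this is the whole point of the scaling built into the definition of $\psi$. Setting $C'' := C'\bigl(1 + C_{\var} e^R \cdot \esssup_\omega K(\omega)^2\|\psi_\omega\|_{BV}\bigr)$ gives the claim. The only substantive obstacle is the appearance of the factor $K(\omega)^2$ when transferring between $\|\cdot\|_\omega$ and $\|\cdot\|_{BV}$; this is non-trivial because $K$ is only tempered (not bounded), and it is exactly the role of the condition \eqref{eq:condobservable} — which in turn motivates the scaled observable $\psi_K$ of \eqref{scale} — to absorb it into a uniform constant.
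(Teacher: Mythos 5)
Your proof is correct and follows essentially the same route as the paper's: both decompose into the unperturbed part plus a perturbation governed by $e^{\theta\psi_\omega}-1$, pass between $\|\cdot\|_\omega$ and $\|\cdot\|_{BV}$ via \eqref{LN} and \eqref{an3}, use the Banach-algebra property of $BV$, and then invoke \eqref{eq:condobservable} to absorb the $K(\omega)^2\|\psi_\omega\|_{BV}$ factor into a uniform constant. The only cosmetic differences are that you apply \eqref{an3} before decomposing the multiplier rather than splitting the operator first, and you make the variation bound explicit via (V9) where the paper cites the proof of \cite[Lemma 3.13]{D}.
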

	
	\begin{proof}
		By~\eqref{LN} and~\eqref{an3}, we have that 
		\[
		\begin{split}
			\| \mathcal L_\omega^\theta \phi -\mathcal L_\omega \phi \|_{\sigma \omega} &=\|\mathcal L_\omega ( (e^{\theta \psi_ \omega}-1)\phi) \|_{\sigma \omega} \\
			&\le C'\|(e^{\theta \psi_\omega}-1)\phi\|_\omega \\
			&\le C'K(\omega) \|(e^{\theta \psi_\omega}-1)\phi\|_{BV} \\
			&\le C'C_{var} K(\omega) \|e^{\theta \psi_\omega}-1\|_{BV} \cdot \|\phi \|_{BV}.
		\end{split}
		\]
		On the other hand, the arguments in the proof of~\cite[Lemma 3.13]{D} give  that 
		\[
		\begin{split} 
			\|e^{\theta \psi_\omega}-1\|_{BV} &\le  e^{\|\psi_\omega\|_\infty} \bigg (\|\psi_\omega \|_\infty+\var(\psi_\omega) \bigg ) \\
			&\le (C_{var}+1)e^{\|\psi_\omega \|_\infty} \| \psi_\omega \|_{BV}, 
		\end{split}
		\]
		and therefore
		\[
		\| \mathcal L_\omega^\theta \phi -\mathcal L_\omega \phi \|_{\sigma \omega} \le C'C_{var}(C_{var}+1)K(\omega)^2e^{\|\psi_\omega \|_\infty} \| \psi_\omega \|_{BV} \cdot \|\phi\|_\omega. 
		\]
		From~\eqref{eq:condobservable} (and~\eqref{b}), we conclude that there exists a constant $N>0$ such that 
		\[
		\| \mathcal L_\omega^\theta \phi -\mathcal L_\omega \phi \|_{\sigma \omega} \le N\|\phi\|_\omega,
		\]
		for $|\theta|\le 1$, $\mathbb P$-a.e. $\omega \in \Omega$ and $\phi \in BV$. The conclusion of the lemma now follows readily from~\eqref{an3} and the triangle inequality.
	\end{proof}
	\begin{remark}
		From Lemma~\ref{24}, we conclude immediately that the dual twisted transfer operator acts boundedly, i.e. that
		\begin{equation*}
			\|(\L_{\omega}^\theta)^*\ell\|_{\omega}\le C''\|\ell\|_{\sigma\omega},
		\end{equation*}
		for $\mathbb P$-a.e $\omega\in\Omega$ and every $|\theta|\le1$, $\ell\in BV^*$, and with $C''$ as in Lemma \ref{24}.
	\end{remark}
	\subsection{An auxiliary regularity result}
	
	Let $\mathcal S$ consists of all measurable maps $\mathcal V\colon \Omega \times X\to \mathbb C$ such that $\mathcal V(\omega, \cdot) \in BV$ for $\omega \in \Omega$ and 
	$\|\mathcal V\|_{\mathcal S}:=\esssup_{\omega \in \Omega} \|\mathcal V(\omega, \cdot) \|_\omega <+\infty$. For $\mathcal V \in \mathcal S$, we will often write $\mathcal V_\omega$ instead of $\mathcal V(\omega, \cdot)$.  
	\par\noindent The proof of the following result is a straightforward consequence of~\eqref{LN} (and completeness of $BV$).
	\begin{lemma}
		We have that $(\mathcal S, \| \cdot \|_{\mathcal S})$ is a Banach space.
	\end{lemma}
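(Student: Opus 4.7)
The plan is to verify the norm axioms and then prove completeness, leveraging the two-sided comparison in \eqref{LN} to move between $\|\cdot\|_\omega$ and $\|\cdot\|_{BV}$ on each fibre, and then the completeness of $BV$. First, the triangle inequality, absolute homogeneity, and the fact that $\|\mathcal V\|_{\mathcal S}=0$ forces $\mathcal V=0$ for $\mathds P$-a.e.\ $\omega$ are inherited directly from the corresponding properties of the adapted norms $\|\cdot\|_\omega$; as usual, we identify elements of $\mathcal S$ that coincide outside a $\mathds P$-null set.

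For completeness, let $(\mathcal V^{(n)})_{n\in\N}$ be a Cauchy sequence in $\mathcal S$. By definition, there exists a $\mathds P$-full measure set $\Omega'\subset\Omega$ (the intersection, over a countable collection of indices witnessing the Cauchy property, of the sets where the essential supremum is attained as a supremum) such that, for every $\omega\in\Omega'$, the sequence $(\mathcal V^{(n)}_\omega)_n$ is Cauchy in $(BV,\|\cdot\|_\omega)$. Since \eqref{LN} makes $\|\cdot\|_\omega$ equivalent to $\|\cdot\|_{BV}$ for each fixed $\omega\in\Omega'$ (with constants depending on $\omega$), $(\mathcal V^{(n)}_\omega)_n$ is also Cauchy in $(BV,\|\cdot\|_{BV})$. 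By completeness of $BV$, there is $\mathcal V_\omega\in BV$ with $\|\mathcal V^{(n)}_\omega-\mathcal V_\omega\|_{BV}\to 0$, and we set $\mathcal V_\omega=0$ for $\omega\notin\Omega'$.

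To conclude that $\mathcal V\in\mathcal S$, two things remain: joint measurability and the norm bound. For the norm bound, fix $\epsilon>0$ and choose $N$ so that $\|\mathcal V^{(n)}_\omega-\mathcal V^{(m)}_\omega\|_\omega<\epsilon$ for all $n,m\ge N$ and $\mathds P$-a.e.\ $\omega$. For each such $\omega$, the equivalence in \eqref{LN} shows that $\|\cdot\|_\omega$ is continuous with respect to $BV$-convergence; hence, letting $m\to\infty$, we obtain $\|\mathcal V^{(n)}_\omega-\mathcal V_\omega\|_\omega\le\epsilon$, which yields $\|\mathcal V^{(n)}-\mathcal V\|_{\mathcal S}\le\epsilon$ for $n\ge N$, so $\mathcal V^{(n)}\to\mathcal V$ in $\mathcal S$ and in particular $\mathcal V\in\mathcal S$.

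The main technical step is ensuring joint measurability of $(\omega,x)\mapsto\mathcal V_\omega(x)$. Because of the continuous embedding $BV\hookrightarrow L^1$, the convergence $\mathcal V^{(n)}_\omega\to\mathcal V_\omega$ also holds in $L^1(X,m)$ for every $\omega\in\Omega'$. Extracting a subsequence (which may be chosen in a measurable way, e.g.\ the fixed subsequence given by any Cauchy rate, since the convergence is in norm and not just $\mathds P$-a.e.) that converges $m$-a.e.\ on $X$ for each $\omega\in\Omega'$, the joint measurability of $\mathcal V$ follows from the joint measurability of each $\mathcal V^{(n)}$ together with the fact that the pointwise limit of measurable functions is measurable on the set where the limit exists. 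This is the only slightly delicate point, and it is exactly why the norm $\|\cdot\|_{\mathcal S}$ is taken via the essential supremum: it is compatible with the $\mathds P$-a.e.\ equivalence used to extract the measurable limit.
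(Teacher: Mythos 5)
Your proof is correct and follows exactly the route the paper itself indicates (the paper states the result is "a straightforward consequence of \eqref{LN} and completeness of $BV$" and leaves the details to the reader); the two-sided norm comparison, fibre-wise completeness, essential-supremum argument, and measurability of the limit are all handled properly. One small simplification worth noting: for the measurability step you can use \eqref{an1} directly, $\|\cdot\|_1\le\|\cdot\|_\omega$ with constant $1$, so that a fast Cauchy rate in $\|\cdot\|_{\mathcal S}$ immediately yields a summable $L^1(X,m)$ rate for $\mathds P$-a.e.\ $\omega$ without invoking the $\omega$-dependent constant $K(\omega)$ from \eqref{LN}, making the extraction of an $m$-a.e.\ convergent subsequence cleaner.
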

	
	\begin{remark}
		It follows from~\eqref{an4} that the map $(\omega, x)\mapsto v_\omega^0(x)$ belongs to $\mathcal S$. From now on, we will denote this map by $v^0$.
	\end{remark}
	Let $\mathcal S_0$ denote the closed subspace of $\mathcal S$ which consists of all $\mathcal V\in \mathcal S$ such that
	\[
	\int_X \mathcal V(\omega, \cdot) \, dm=0, \quad \text{for $\mathbb P$-a.e. $\omega \in \Omega$.}
	\]
	Let $B_{\mathbb C} (0,1)$ denote the unit ball in $\mathbb C$. We define $G\colon B_{\mathbb C}(0,1) \times \mathcal S_0 \to \mathcal S$ by
	\[
	G(\theta, \mathcal W) (\omega, \cdot)=\mathcal L_{\sigma^{-1} \omega}^\theta (\mathcal W(\sigma^{-1} \omega, \cdot)+v_{\sigma^{-1} \omega}^0(\cdot)).
	\]
	We claim that $G$ is well-defined.  Indeed, Lemma~\ref{24} implies that 
	\[
	\|G(\theta, \mathcal W) (\omega, \cdot) \|_\omega \le C''(\|\mathcal W(\sigma^{-1} \omega, \cdot)\|_{\sigma^{-1} \omega}+\| v_{\sigma^{-1} \omega}^0 \|_{\sigma^{-1} \omega}),
	\]
	for $\mathbb P$-a.e. $\omega \in \Omega$. Therefore,  using~\eqref{an4} we conclude that 
	\[
	\esssup_{\omega \in \Omega}\|G(\theta, \mathcal W) (\omega, \cdot) \|_\omega <+\infty.
	\]
	Hence, $G$ is well-defined. We also define $H\colon B_{\mathbb C}(0,1) \times \mathcal S_0 \to L^\infty (\Omega)$ by 
	\[
	H(\theta, \mathcal W)(\omega)=\int_X \mathcal L_{\sigma^{-1} \omega}^\theta (\mathcal W(\sigma^{-1} \omega, \cdot)+v_{\sigma^{-1} \omega}^0(\cdot))\, dm.
	\]
	We note that $H$ is well-defined since $G$ is well-defined and due to~\eqref{an1}. Some finer properties of $G$ and $H$ are given in the following result.

	\begin{lemma}\label{U}
		There exists a neighborhood  $\mathcal U\subset  B_{\mathbb C} (0,1) \times \mathcal S_0$ of $(0,0)$ such that $G$ and $H$ are analytic on $\mathcal U$ and that $\essinf_{\mathcal U} \|H\|_{L^\infty}\ge \frac{1}{2}$.
	\end{lemma}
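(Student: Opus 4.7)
The plan is to expand $G$ in powers of $\theta$ by writing $e^{\theta\psi_\omega}=\sum_{k\ge 0}\tfrac{\theta^k}{k!}\psi_\omega^k$ and interpreting the resulting series as a convergent sum of bounded operators in $L(\mathcal S)$. Concretely, I would define $T^{(k)}\colon\mathcal S\to\mathcal S$ by $(T^{(k)}\mathcal V)(\omega,\cdot):=\mathcal L_{\sigma^{-1}\omega}(\psi_{\sigma^{-1}\omega}^k\,\mathcal V(\sigma^{-1}\omega,\cdot))$, so that formally $G(\theta,\mathcal W)=\sum_{k\ge 0}\tfrac{\theta^k}{k!}T^{(k)}(\mathcal W+v^0)$. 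Since $G$ is affine in $\mathcal W$, establishing analyticity of $G$ reduces to showing that the operator series $\sum_k \tfrac{\theta^k}{k!}T^{(k)}$ converges in $L(\mathcal S)$ uniformly for $\theta$ in bounded subsets of $\mathbb C$.

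The main obstacle is controlling $\|T^{(k)}\|_{L(\mathcal S)}$ uniformly in $\omega$, given that neither $K$ nor $\|\psi_\omega\|_{BV}$ is individually uniformly bounded. For $\phi\in BV$, chaining \eqref{an3} with both inequalities in \eqref{LN} and the submultiplicativity $\|fg\|_{BV}\le C_{var}\|f\|_{BV}\|g\|_{BV}$ yields
\[
\|\mathcal L_\omega(\psi_\omega^k\phi)\|_{\sigma\omega}\le C'C_{var}\,K(\omega)^2\|\psi_\omega^k\|_{BV}\,\|\phi\|_\omega.
\]
Iterated submultiplicativity then gives $\|\psi_\omega^k\|_{BV}\le C_{var}^{k-1}\|\psi_\omega\|_{BV}^k$, and this is where the quadratic scaling built into \eqref{eq:condobservable} becomes indispensable: setting $M:=\esssup_{\omega\in\Omega}K(\omega)^2\|\psi_\omega\|_{BV}<+\infty$ and using $K\ge 1$, one obtains $K(\omega)^2\|\psi_\omega\|_{BV}^k=(K(\omega)^2\|\psi_\omega\|_{BV})^k\cdot K(\omega)^{2-2k}\le M^k$ for $k\ge 1$. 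Combining, $\|T^{(k)}\|_{L(\mathcal S)}\le C'(C_{var}M)^k$ for all $k\ge 0$, the operator series converges for every $\theta\in\mathbb C$, and $G$ is therefore analytic on the whole of $B_{\mathbb C}(0,1)\times\mathcal S_0$.

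For $H$, I would observe that the integration functional $J\colon\mathcal S\to L^\infty(\Omega)$ defined by $J(\mathcal V)(\omega):=\int_X\mathcal V(\omega,\cdot)\,dm$ is bounded, since $|J(\mathcal V)(\omega)|\le\|m\|_\omega^*\|\mathcal V(\omega,\cdot)\|_\omega\le\|\mathcal V\|_{\mathcal S}$ by \eqref{eq:goodLebesgue}; hence $H=J\circ G$ inherits analyticity from $G$. Finally, equivariance of the invariant density (Theorem~\ref{7}) gives $H(0,0)(\omega)=\int_X\mathcal L_{\sigma^{-1}\omega}v^0_{\sigma^{-1}\omega}\,dm=\int_X v^0_\omega\,dm=1$ for $\mathbb P$-a.e.~$\omega$, so $\|H(0,0)\|_{L^\infty(\Omega)}=1$. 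Shrinking $\mathcal U$ around $(0,0)$ so that $\|H(\theta,\mathcal W)-H(0,0)\|_{L^\infty(\Omega)}\le 1/2$ throughout $\mathcal U$, which is possible by continuity of $H$, will then yield the required lower bound $\|H(\theta,\mathcal W)\|_{L^\infty(\Omega)}\ge 1/2$ on $\mathcal U$.
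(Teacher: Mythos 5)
Your argument is correct and follows essentially the same route as the paper's proof: both expand $e^{\theta\psi}$ in a power series, bound the $k$-th term by combining \eqref{an3}, \eqref{LN}, submultiplicativity of $\|\cdot\|_{BV}$, and the crucial quadratic scaling \eqref{eq:condobservable} (equivalently using $K^{2-2k}\le 1$ vs.\ the paper's $K^{2/k}\le K^2$), then conclude analyticity of $G$ and hence of $H$ by uniform convergence, and obtain the lower bound on $|H|$ by continuity from $H(0,0)\equiv 1$. The only cosmetic differences are that you package the argument as convergence of an operator series $\sum_k\frac{\theta^k}{k!}T^{(k)}$ in $L(\mathcal S)$ and factor $H=J\circ G$ via the boundedness of integration against $m$ (using \eqref{eq:goodLebesgue}), which makes the presentation slightly cleaner but is mathematically the same content.
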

	\begin{proof}
		For any $\theta\in\mathbb C$, $|\lambda |<1$, $\mathbb P$-a.e. $\omega\in\Omega$ and $\mathcal W\in \mathcal S$, we have that 
		\begin{align*}
			G(\theta,\W)(\omega)&=\L_{\sigma^{-1}\omega}^\theta(\W(\sigma^{-1}\omega)+v^0_{\sigma^{-1}\omega})\\
			&=\L_{\sigma^{-1}\omega}(e^{\theta\psi_{\sigma^{-1}\omega}}\left(\W(\sigma^{-1}\omega)+v^0_{\sigma^{-1}\omega}\right))\\
			&=\sum_{k=0}^\infty\frac{\theta^k}{k!}\L_{\sigma^{-1}\omega}\left(\psi_{\sigma^{-1}\omega}^k(\W(\sigma^{-1}\omega)+v^0_{\sigma^{-1}\omega})\right),
		\end{align*}
where $\psi_{\sigma^{-1}\omega}^k$ denotes the $k$-th power of $\psi_{\sigma^{-1}\omega}$. 
		We now remark that the maps  $\phi_k \colon B_{\mathbb C}(0,1)\times\S_0 \to \S$ given by 
		\[\phi_k(\theta,\W):=\dfrac{\theta^k}{k!}\L_{\sigma^{-1}\omega}\left(\psi_{\sigma^{-1}\omega}^k(\W(\sigma^{-1}\omega)+v^0_{\sigma^{-1}\omega})\right) \] are analytic.
		Moreover, \eqref{LN} and~\eqref{an3} imply that 
		\begin{align*}
			&\left\|\L_{\sigma^{-1}\omega}\left(\psi_{\sigma^{-1}\omega}^k(\W(\sigma^{-1}\omega)+v^0_{\sigma^{-1}\omega})\right)\right\|_{\omega}\\
			&\le C'\|\psi_{\sigma^{-1}\omega}^k(\W(\sigma^{-1}\omega)+v^0_{\sigma^{-1}\omega})\|_{\sigma^{-1}\omega}\\
			&\le C' K(\sigma^{-1}\omega)C_{var} \|\psi_{\sigma^{-1}\omega}^k\|_{BV} \cdot \|\W(\sigma^{-1}\omega)+v^0_{\sigma^{-1}\omega}\|_{BV}\\
			&\le C' K(\sigma^{-1}\omega)^2 C_{var}^k \|\psi_{\sigma^{-1}\omega}\|_{BV}^k \cdot \left(\|\W\|_{\S}+\|v^0\|_{\S}\right)\\
			&\le C' [\esssup_{\omega\in\Omega}(C_{var} K(\sigma^{-1}\omega)^2\|\psi_{\sigma^{-1}\omega}\|_{BV})]^k \left(\|\W\|_{\S}+\|v^0\|_{\S}\right),
		\end{align*}
		where we used that  $K(\omega)^{2/k}\le K(\omega)^2$ (which holds since $K(\omega) \ge 1$). We also note that~\eqref{eq:condobservable} implies that 
		$\esssup_{\omega\in\Omega}(C_{var} K(\sigma^{-1}\omega)^2\|\psi_{\sigma^{-1}\omega}\|_{BV})<+\infty$. 
		In particular, we obtain that, for  $\rho \in (0,1)$ sufficiently small,  $k\mapsto \|\rho^k\phi_k(\theta,\W)\|_{\S}$ is summable, and thus  $G$ is a uniform limit of analytic maps on some neighborhood  $\mathcal U\subset  B_{\mathbb C} (0,1) \times \mathcal S_0$ of $(0,0)$.
		
		A similar argument can be applied to $H$. In particular, $H$ is continuous and thus by shrinking  $\mathcal U$ if necessary, 
		we can assume that for $(\theta,\W)\in U$ we have that 
		\[\|H(\theta,\W)-H(0,0)\|_{L^\infty}\le\frac{1}{4}. \]
		Since $H(0,0)(\omega)=\int_X\L_{\sigma^{-1}\omega}v^0_{\sigma^{-1}\omega}dm=1$ for $\mathbb P$-a.e. $\omega \in \Omega$, we easily obtain that \[ \essinf_\mathcal U \|H\|_{L^\infty}\ge \frac{1}{2}, \]
		which completes the proof of the lemma. 
	\end{proof}
	
	\begin{lemma}
		The map $F:\mathcal U \subset B_{\mathbb C}(0,1)\times\S_0\to\S_0$ given by
		\[F(\theta,\W):=\frac{G(\theta,\W)}{H(\theta,\W)}-\W-v^0,\]
		is well-defined, analytic and satisfies $F(0,0)=0$.
		Furthermore, its differential w.r.t $\W$ at $(0,0)$, $D_2F(0,0)\colon \S_0\to\S_0$ is invertible. 
	\end{lemma}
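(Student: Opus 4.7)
The plan is to treat each of the four assertions in turn; everything reduces to routine checks except the invertibility of $D_2 F(0,0)$, which I would establish via a Neumann series argument exploiting the adapted norm estimate~\eqref{an2}.

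First, for well-definedness: on $\mathcal U$ we have $|H(\theta,\W)|\ge 1/2$ by Lemma~\ref{U}, so $G/H$ is a well-defined element of $\S$. To verify that $F(\theta,\W)\in\S_0$, I would integrate over $X$, using $\int_X\W(\omega,\cdot)\,dm=0$ (since $\W\in\S_0$) and $\int_X v^0_\omega\,dm=1$, to get
\[
\int_X F(\theta,\W)(\omega,\cdot)\,dm=\frac{H(\theta,\W)(\omega)}{H(\theta,\W)(\omega)}-0-1=0.
\]
Analyticity of $F$ on $\mathcal U$ is then immediate from the analyticity of $G$ and $H$ (Lemma~\ref{U}) together with the fact that $1/H$ is analytic where $H$ is non-vanishing. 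For $F(0,0)=0$, the equivariance relation $\L_\omega v^0_\omega=v^0_{\sigma\omega}$ yields $G(0,0)(\omega)=\L_{\sigma^{-1}\omega}v^0_{\sigma^{-1}\omega}=v^0_\omega$ and hence $H(0,0)\equiv 1$, so $F(0,0)=v^0-0-v^0=0$.

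Next, I would compute $D_2 F(0,0)$. Differentiating $G$ at $(0,0)$ along $\mathcal V\in\S_0$ gives
\[
(\mathcal A\mathcal V)(\omega):=D_2G(0,0)[\mathcal V](\omega)=\L_{\sigma^{-1}\omega}\bigl(\mathcal V(\sigma^{-1}\omega,\cdot)\bigr),
\]
and the corresponding differential of $H$ vanishes on $\S_0$ because $\int_X\mathcal V(\sigma^{-1}\omega,\cdot)\,dm=0$. Applying the quotient rule at $(0,0)$, where $H=1$, yields
\[
D_2 F(0,0)[\mathcal V]=\mathcal A\mathcal V-\mathcal V=-(I-\mathcal A)\mathcal V.
\]

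The main point is now to invert $I-\mathcal A$ on $\S_0$, and this is where the adapted norms are essential. For $\mathcal V\in\S_0$, Proposition~\ref{90} identifies $\mathcal V(\omega,\cdot)$ as an element of $BV^0$, so $\Pi(\omega)\mathcal V(\omega,\cdot)=\mathcal V(\omega,\cdot)$ for $\mathbb P$-a.e.\ $\omega$. Iterating $\mathcal A$,
\[
(\mathcal A^n\mathcal V)(\omega)=\L_{\sigma^{-n}\omega}^n\bigl(\mathcal V(\sigma^{-n}\omega,\cdot)\bigr)=\L_{\sigma^{-n}\omega}^n\Pi(\sigma^{-n}\omega)\bigl(\mathcal V(\sigma^{-n}\omega,\cdot)\bigr),
\]
and~\eqref{an2} yields $\|(\mathcal A^n\mathcal V)(\omega)\|_\omega\le e^{-\lambda n}\|\mathcal V(\sigma^{-n}\omega,\cdot)\|_{\sigma^{-n}\omega}\le e^{-\lambda n}\|\mathcal V\|_\S$. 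Taking the essential supremum over $\omega$ gives $\|\mathcal A^n\|_{\S_0\to\S_0}\le e^{-\lambda n}$, so $\mathcal A$ has spectral radius at most $e^{-\lambda}<1$ on $\S_0$. Consequently, the Neumann series $\sum_{n\ge 0}\mathcal A^n$ converges in operator norm on $\S_0$, giving the bounded inverse $(I-\mathcal A)^{-1}$, and therefore $D_2 F(0,0)=-(I-\mathcal A)$ is invertible. The hard part was really the preceding construction of the adapted norms themselves: without the uniform-in-$\omega$ contraction~\eqref{an2}, the tempered-but-non-constant factor $D_1(\omega)$ in~\eqref{436} would obstruct this Neumann argument.
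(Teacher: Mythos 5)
Your proof is correct and follows essentially the same route as the paper: the differential is computed identically, and the key input in both arguments is the uniform contraction estimate~\eqref{an2}, applied to the cocycle iterates of $\mathcal A$ acting on $\S_0$. The paper separates the argument into injectivity (iterating~\eqref{an2}) and surjectivity (explicitly constructing the preimage $\mathcal V(\omega,\cdot)=-\sum_{n\ge 0}\L_{\sigma^{-n}\omega}^n\W(\sigma^{-n}\omega,\cdot)$) and then invokes the open mapping theorem to get boundedness of the inverse; your Neumann-series packaging builds exactly that series and delivers the bounded inverse directly, so it is a modest streamlining of the same idea rather than a genuinely different approach.
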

	\begin{proof}
		The first part of the lemma follows readily from Lemma~\ref{U}.
		Furthermore, note that 
		\begin{equation*}
			(D_2F(0,0)\W)_\omega=\L_{\sigma^{-1} \omega} \W_{\sigma^{-1} \omega}-\W_\omega,
		\end{equation*}
		for $\omega \in \Omega$ and $\W \in \S_0$.
		
		We now prove that $D_2F(0,0)$ is invertible. Assume that $\W \in \S_0$ satisfies that $D_2 F(0,0) \W=0$.  Hence, $\W_{\omega}=\mathcal L_{\sigma^{-1} \omega} \W_{\sigma^{-1} \omega}$ for $\mathbb P$-a.e. $\omega \in \Omega$. Consequently, it follows from~\eqref{an2} that 
		for each $n\in \mathbb N$ and $\mathbb P$-a.e. $\omega \in \Omega$, 
		\[
		\begin{split}
			\| \W_{\omega} \|_\omega &= \| \mathcal L_{\sigma^{-n} \omega}^n \W_{\sigma^{-n} \omega} \|_\omega  \\
			&\le e^{-\lambda n}\| \W_{\sigma^{-n} \omega} \|_{\sigma^{-n} \omega} \\
			&\le e^{-\lambda n} \| \W \|_{\S}.
		\end{split}
		\]
		Letting $n\to \infty$, we conclude that $\W=0$. Thus, $D_2 F(0,0)$ is injective. 
		
		In order to establish the surjectivity of $D_2F(0,0)$, let us take an arbitrary $\W \in \S_0$. We define $\mathcal V$ by 
		\[
		\mathcal V(\omega, \cdot)=-\sum_{n=0}^\infty \mathcal L_{\sigma^{-n} \omega}^n \W(\sigma^{-n} \omega, \cdot), \quad \omega \in \Omega.
		\]
		Using~\eqref{an2},  it is easy to show that $\mathcal V \in \S_0$. Moreover, a simple computation yields that $D_2F(0,0)\mathcal V=\W$. Together with the open mapping theorem, this completes the proof of the lemma. 
	\end{proof}

	The following is the main result of this subsection. 
	\begin{theorem}\label{thm:geneigenvec}
		There exists a neighborhood $U\subset B_{\mathbb C}(0,1)$ of $0\in\mathbb C$, such that for any $\theta\in U$, there exist $v^\theta\in\S$, $\lambda^\theta\in L^\infty(\Omega)$, satisfying:
		\begin{enumerate}
			\item The maps $U\ni \theta \mapsto v^\theta\in\S$ and $U\ni \theta \mapsto \lambda^\theta\in L^\infty(\Omega)$ are analytic.
			\item For any $\theta\in U$ and $\mathbb P$-a.e. $\omega\in\Omega$, $v_\omega^\theta$, $\lambda^\theta_\omega$ satisfy
			\begin{align}\label{eq:geneigenvec}
				\L_\omega^\theta v_\omega^\theta&=\lambda_\omega^\theta v_{\sigma\omega}^\theta\\
				\lambda_\omega^\theta&=\int_X\L_\omega^\theta v_\omega^\theta dm.
			\end{align}
		\end{enumerate}
	\end{theorem}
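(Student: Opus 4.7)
The plan is to invoke the analytic implicit function theorem on the map $F$ built in the previous lemma, and then unpack the resulting identity to exhibit $v^\theta$ and $\lambda^\theta$.

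First, recall that $F\colon \mathcal U\subset B_{\mathbb C}(0,1)\times\mathcal S_0\to\mathcal S_0$ is analytic, satisfies $F(0,0)=0$, and its partial differential $D_2F(0,0)\in L(\mathcal S_0)$ is invertible. By the analytic implicit function theorem on Banach spaces, there exist a neighborhood $U\subset B_{\mathbb C}(0,1)$ of $0$ and an analytic map $U\ni\theta\mapsto \mathcal W^\theta\in\mathcal S_0$ with $\mathcal W^0=0$ and $F(\theta,\mathcal W^\theta)=0$ for every $\theta\in U$. This last identity reads
\begin{equation*}
G(\theta,\mathcal W^\theta)=H(\theta,\mathcal W^\theta)\cdot(\mathcal W^\theta+v^0),
\end{equation*}
where the division by $H(\theta,\mathcal W^\theta)$ is justified since, shrinking $U$ if necessary, Lemma \ref{U} gives $\|H(\theta,\mathcal W^\theta)\|_{L^\infty}\ge 1/2$.

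Next, I would define $v^\theta:=\mathcal W^\theta+v^0\in\mathcal S$. Since $v^0\in\mathcal S$ is fixed and $\theta\mapsto\mathcal W^\theta\in\mathcal S_0\hookrightarrow\mathcal S$ is analytic, the map $U\ni\theta\mapsto v^\theta\in\mathcal S$ is analytic. Unpacking the definition of $G$, the fibered identity above says, for $\mathds P$-a.e.\ $\omega\in\Omega$,
\begin{equation*}
\mathcal L^\theta_{\sigma^{-1}\omega} v^\theta_{\sigma^{-1}\omega}=H(\theta,\mathcal W^\theta)(\omega)\cdot v^\theta_\omega.
\end{equation*}
Shifting $\omega\mapsto\sigma\omega$, this becomes $\mathcal L^\theta_\omega v^\theta_\omega=H(\theta,\mathcal W^\theta)(\sigma\omega)\cdot v^\theta_{\sigma\omega}$, which motivates setting
\begin{equation*}
\lambda^\theta_\omega:=H(\theta,\mathcal W^\theta)(\sigma\omega)=\int_X \mathcal L^\theta_\omega v^\theta_\omega\,dm,
\end{equation*}
where the second equality follows by integrating the eigenvalue relation against $m$ and using that $\int_X v^\theta_{\sigma\omega}\,dm=\int_X v^0_{\sigma\omega}\,dm+\int_X\mathcal W^\theta_{\sigma\omega}\,dm=1$. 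This gives \eqref{eq:geneigenvec} directly.

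Finally, I would verify the analyticity of $\theta\mapsto\lambda^\theta\in L^\infty(\Omega)$. This is precisely the composition of the analytic map $\theta\mapsto H(\theta,\mathcal W^\theta)\in L^\infty(\Omega)$ (analytic by Lemma \ref{U} and the analyticity of $\theta\mapsto\mathcal W^\theta$) with the isometric automorphism $f\mapsto f\circ\sigma$ of $L^\infty(\Omega)$, hence analytic. The only mildly delicate points are the repeated use of the measure-preserving shift to identify the value of $H$ at $\sigma\omega$ with an integral against $\mathcal L^\theta_\omega v^\theta_\omega$, and checking that the various essential-supremum statements (needed to stay in $\mathcal S$ and $L^\infty$) all survive shrinking $U$; both are routine. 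The substantive ingredient is the implicit function theorem, whose hypotheses have already been secured in the preceding lemma.
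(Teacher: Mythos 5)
Your proof is correct and follows essentially the same route as the paper's (much terser) proof: apply the analytic implicit function theorem to $F$ to obtain the analytic family $\theta\mapsto\mathcal W^\theta\in\mathcal S_0$, set $v^\theta=v^0+\mathcal W^\theta$, and read off $\lambda^\theta_\omega=H(\theta,\mathcal W^\theta)(\sigma\omega)=\int_X\L^\theta_\omega v^\theta_\omega\,dm$. You correctly supply the two details the paper elides, namely the $\omega\mapsto\sigma\omega$ shift needed to match the indexing in~\eqref{eq:geneigenvec} and the observation that $\int_X v^\theta_{\sigma\omega}\,dm=1$ because $\mathcal W^\theta\in\mathcal S_0$.
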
 
	\begin{proof}
		By the previous lemma, we may apply the analytic implicit function theorem between Banach spaces to $F$, to construct a neighborhood $U\subset B_{\mathbb C}(0,1)$ of $0$, a $w^\theta\in\S_0$, analytic in $\theta$, and such that for any $\theta\in U$,
		\[0=F(\theta,w^\theta)=\frac{G(\theta,w^\theta)}{H(\theta,w^\theta)}-w^\theta-v^0.\]
		Setting
		\begin{equation}
			v^\theta:= v^0+w^\theta,
		\end{equation}
		one obtains the desired  result.
	\end{proof}	            
	
	For future use we also note the following:
	\begin{lemma}\label{lemma:derivatives}
		The first and second derivatives at $0$ of the analytic map $\theta\ni U'\mapsto \lambda^\theta\in L^\infty(\Omega)$ are given by:
		\begin{align*}
			\frac{d\lambda_\omega^\theta}{d\theta}(0)&=\int_X \psi_\omega v^0_\omega dm=0\\
			\frac{d^2\lambda_\omega^\theta}{d\theta^2}(0)&=\int_X \psi_\omega^2 v^0_\omega dm+ 2\sum_{j=1}^\infty\int_X \psi_\omega \cdot \left[\L^{j}_{\sigma^{-j}\omega}(\psi_{\sigma^{-j}\omega}v^0_{\sigma^{-j}\omega})\right]dm.
		\end{align*}
	\end{lemma}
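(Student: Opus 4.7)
The starting observation is that $\lambda^\theta_\omega$ has a much more tractable expression than the one appearing in the statement: since the Perron-Frobenius operator preserves integrals against Lebesgue measure, $\int_X \L_\omega f\, dm = \int_X f\, dm$, one has
\begin{equation*}
\lambda^\theta_\omega \;=\; \int_X \L^\theta_\omega v^\theta_\omega\, dm \;=\; \int_X e^{\theta\psi_\omega} v^\theta_\omega\, dm.
\end{equation*}
By Theorem \ref{thm:geneigenvec}, I can write $v^\theta = v^0 + w^\theta$ with $w^\theta \in \S_0$ analytic in $\theta$ and $w^0 = 0$; since $\S_0$ is a closed subspace of $\S$, all $\theta$-derivatives of $w^\theta$ at $0$ again lie in $\S_0$ and therefore integrate to $0$ against $m$ for $\mathbb P$-a.e. $\omega$. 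A direct differentiation under the integral sign then gives
\begin{equation*}
\tfrac{d\lambda^\theta_\omega}{d\theta}(0) \;=\; \int_X \psi_\omega v^0_\omega\, dm + \int_X \dot w_\omega\, dm \;=\; \int_X \psi_\omega v^0_\omega\, dm,
\end{equation*}
which vanishes by the centering condition \eqref{center}.

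Applying the Leibniz rule a second time and using that $\int_X \ddot w_\omega\, dm = 0$, I obtain
\begin{equation*}
\tfrac{d^2\lambda^\theta_\omega}{d\theta^2}(0) \;=\; \int_X \psi_\omega^2 v^0_\omega\, dm + 2\int_X \psi_\omega \dot v_\omega\, dm,
\end{equation*}
where I write $\dot v_\omega := \dot w_\omega \in BV^0$. It remains to identify $\dot v$. Differentiating the equivariance relation $\L^\theta_\omega v^\theta_\omega = \lambda^\theta_\omega v^\theta_{\sigma\omega}$ at $\theta = 0$, using $\lambda^0 = 1$ and $\dot\lambda_\omega = 0$ from the first step, produces the cohomological equation
\begin{equation*}
\dot v_{\sigma\omega} - \L_\omega \dot v_\omega \;=\; \L_\omega(\psi_\omega v^0_\omega).
\end{equation*}

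I would then solve this equation by iterating backwards along the $\sigma$-orbit of $\omega$, which gives
\begin{equation*}
\dot v_\omega \;=\; \L^n_{\sigma^{-n}\omega} \dot v_{\sigma^{-n}\omega} + \sum_{j=1}^n \L^j_{\sigma^{-j}\omega}\!\bigl(\psi_{\sigma^{-j}\omega} v^0_{\sigma^{-j}\omega}\bigr).
\end{equation*}
Because $\dot v_{\sigma^{-n}\omega} \in BV^0$ the projector $\Pi$ acts as the identity on it, and the uniform contraction \eqref{an2} yields $\|\L^n_{\sigma^{-n}\omega}\dot v_{\sigma^{-n}\omega}\|_\omega \le e^{-\lambda n}\|\dot v\|_\S \to 0$. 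The same estimate applied term by term, using the uniform bound $\|\psi_{\sigma^{-j}\omega} v^0_{\sigma^{-j}\omega}\|_{\sigma^{-j}\omega} \lesssim K(\sigma^{-j}\omega)^2 \|\psi_{\sigma^{-j}\omega}\|_{BV}$ which is essentially bounded thanks to \eqref{eq:condobservable} and \eqref{an4}, shows that the infinite series converges in $\|\cdot\|_\omega$. Substitution into the formula for the second derivative yields the stated expression.

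The only technical point that requires care is the convergence of the final series $\sum_j \int_X \psi_\omega \L^j_{\sigma^{-j}\omega}(\psi_{\sigma^{-j}\omega}v^0_{\sigma^{-j}\omega})\, dm$ in $L^\infty(\Omega)$. This is handled by combining the decay provided by \eqref{an2} with the uniform control $\|\psi_\omega\|^*_\omega \le \|\psi_\omega\|_\infty \lesssim \|\psi_\omega\|_{BV}$ from \eqref{dan1}, which together dominate the $j$-th summand by a uniform multiple of $e^{-\lambda j}$. No genuine obstacle is expected: the construction of the adapted norms in Proposition \ref{AN} was designed precisely to convert the non-uniform Oseledets decay into the uniform exponential contraction \eqref{an2}, which makes the cohomological equation invertible with uniform estimates and renders the whole computation essentially formal.
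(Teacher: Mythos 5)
Your proof is correct, and it follows precisely the ``classical computation'' that the paper delegates to \cite[Lemma 4.5]{D}: rewrite $\lambda^\theta_\omega=\int_X e^{\theta\psi_\omega}v^\theta_\omega\,dm$ using $\int_X\L_\omega f\,dm=\int_X f\,dm$, differentiate twice (the terms $\int_X\dot w_\omega\,dm$, $\int_X\ddot w_\omega\,dm$ vanish since $\S_0$ is a closed subspace), then identify $\dot v_\omega$ by differentiating the equivariance relation and iterating the resulting cohomological equation backwards, using \eqref{an2} to kill the remainder and \eqref{an1}, \eqref{an4}, \eqref{eq:condobservable} to sum the series with uniform-in-$\omega$ control. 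This is the standard route and the estimates are carried out correctly within the adapted-norm framework.
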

	\begin{proof}
		Classical computation, see e.g. \cite[Lemma 4.5]{D}.
	\end{proof}
	Let us now explain how to get the dual of the previous construction: we define
	\begin{equation}
		\mathcal N:=\{\Phi:\Omega\to BV^*~\text{measurable},~\esssup_{\omega\in\Omega}\|\Phi_\omega\|_\omega^*<\infty\}.
	\end{equation}
	Similarly to $\mathcal S$, we may set, for $\Phi\in\mathcal N$, $\|\Phi\|_\mathcal N:=\esssup_{\omega\in\Omega}\|\Phi_\omega\|_\omega^*$, which defines a norm that turns $\mathcal N$ into a Banach space. We notice that by \eqref{eq:goodLebesgue}, $m\in\mathcal N$. Let us set
	\begin{equation}
		\mathcal N_0:=\{\Phi\in\mathcal N,~\Phi_\omega(v^0_\omega)=0 \quad \text{for $\mathbb P$-a.e. $\omega \in \Omega$} \}.	
	\end{equation}
	Then, $\mathcal N_0$ is a closed subspace of $\mathcal N$.
	
	We may now introduce $G^*:B_\mathbb C(0,1)\times\mathcal N_0\to \mathcal N$ and $H^*:B_\mathbb C(0,1)\times\mathcal N_0\to L^\infty(\Omega)$ by \[G^*(\theta,\Phi)(\omega):=(\L^\theta_{\omega})^* \left(\Phi_{\sigma\omega}+m\right),\]
	and 
	\[H^*(\theta,\Phi)(\omega):=G^*(\theta,\Phi)(\omega)(v^0_\omega),\]
	for $\omega \in \Omega$, $(\theta, \Phi)\in B_\mathbb C(0,1)\times\mathcal N_0$. 
	
	The proof of the following result can be obtained by repeating the arguments yielding Theorem~\ref{thm:geneigenvec}.
	\begin{proposition}\label{adjoint}
		There exists a neighborhood $\mathcal U'$ of $(0,0)\in B_\mathbb C(0,1)\times \mathcal N_0$, such that:
		\begin{itemize}
			\item The maps $G^*$ and $H^*$ are analytic on $\mathcal U'$ and $\essinf_{\mathcal U'}\|H\|_{L^\infty}\ge \frac{1}{2}$.
			\item The map $F^*:\mathcal U'\subset B_\mathbb C(0,1)\times\mathcal N_0\to\mathcal N_0$, defined by 
			\[F^*(\theta,\Phi):=\frac{G^*(\theta,\Phi)}{H^*(\theta,\Phi)}-\Phi-m,\]
			satisfies $F^*(0,0)=0$, is analytic, and its partial differential w.r.t $\Phi$ at $(0,0)$, $D_2F^*(0,0):\mathcal N_0\to\mathcal N_0$ is invertible.
			\item For any $\theta$ in some neighborhood $U'$ of $0\in\mathbb C$, there exists $\phi^\theta\in\mathcal N$ and $\tilde\lambda^\theta\in L^\infty(\Omega,\mathbb C)$, both analytic in $\theta$ and such that
			\begin{align}\label{eq:geneigenvec*}
				(\L_\omega^\theta)^*\phi^\theta_{\sigma\omega}&=\tilde\lambda_\omega^\theta\phi^\theta_\omega\\
				\tilde\lambda_\omega^\theta&= \phi^\theta_{\sigma\omega}(\L_\omega^\theta v^0_\omega).
			\end{align} 
		\end{itemize}
	\end{proposition}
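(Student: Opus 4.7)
The plan is to transpose, step by step, the argument used for Theorem~\ref{thm:geneigenvec}, replacing the adapted-norm estimates \eqref{an3}, \eqref{an2}, \eqref{an4} by their dual counterparts \eqref{dan3}, \eqref{dan2}, \eqref{eq:goodLebesgue}, and the operator-norm control from Lemma~\ref{24} by the boundedness of the dual twisted operator recorded in the remark following Lemma~\ref{24}. The building blocks are thus already in place; the work consists in verifying that each step of the previous proof goes through verbatim in the dual setting.

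First, I would check that $G^*$ and $H^*$ are well-defined on $B_\mathbb{C}(0,1)\times\mathcal N_0$ and analytic on a neighborhood of $(0,0)$. For well-definedness, \eqref{dan3} (in its twisted form) together with \eqref{eq:goodLebesgue} yields
\[
\|G^*(\theta,\Phi)(\omega)\|_\omega^*\le C''\bigl(\|\Phi\|_{\mathcal N}+1\bigr),
\]
and analyticity follows by expanding $(\L_\omega^\theta)^*$ as the power series
\[
(\L_\omega^\theta)^*\ell(\phi)=\sum_{k\ge 0}\frac{\theta^k}{k!}\,\ell\bigl(\L_\omega(\psi_\omega^k\phi)\bigr),
\]
whose $\mathcal N$-valued partial sums are majorized, thanks to \eqref{LN}, \eqref{dan3} and property (V8), by a geometric series with ratio controlled by $\esssup_\omega(C_{var}K(\sigma^{-1}\omega)^2\|\psi_{\sigma^{-1}\omega}\|_{BV})$, which is finite by \eqref{eq:condobservable}. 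This gives analyticity of $G^*$ (and hence of $H^*$) on a neighborhood $\mathcal U'$ of $(0,0)$. Since $G^*(0,0)(\omega)=\L_\omega^* m=m$ and $m(v^0_\omega)=1$, continuity of $H^*$ lets me shrink $\mathcal U'$ so that $\|H^*-1\|_{L^\infty}\le 1/4$, giving $\essinf_{\mathcal U'}\|H^*\|_{L^\infty}\ge 1/2$ and the well-definedness and analyticity of $F^*$ on $\mathcal U'$. The identity $F^*(0,0)=0$ is immediate.

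Next, I compute the partial differential $D_2F^*(0,0)\colon\mathcal N_0\to\mathcal N_0$. Since $\Phi\in\mathcal N_0$ forces $\Phi_{\sigma\omega}(v^0_{\sigma\omega})=0$, one has $H^*(0,\Phi)\equiv 1$, and a direct computation gives
\[
\bigl(D_2F^*(0,0)\Phi\bigr)(\omega)=\L_\omega^*\Phi_{\sigma\omega}-\Phi_\omega.
\]
For injectivity, if $\Phi_\omega=\L_\omega^*\Phi_{\sigma\omega}$, iterating yields $\Phi_\omega=(\L_\omega^n)^*\Phi_{\sigma^n\omega}$. Because $\Phi\in\mathcal N_0$, one has $\Pi^*(\sigma^n\omega)\Phi_{\sigma^n\omega}=\Phi_{\sigma^n\omega}$, so \eqref{dan2} gives $\|\Phi_\omega\|_\omega^*\le e^{-\lambda n}\|\Phi\|_{\mathcal N}\to 0$. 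For surjectivity, given $\W\in\mathcal N_0$, the series
\[
\Phi_\omega:=-\sum_{n\ge 0}(\L_\omega^n)^*\W_{\sigma^n\omega}
\]
converges in $\mathcal N$ (again by \eqref{dan2}), lies in $\mathcal N_0$ (since $(\L_\omega^n)^*\W_{\sigma^n\omega}(v^0_\omega)=\W_{\sigma^n\omega}(v^0_{\sigma^n\omega})=0$), and solves $D_2F^*(0,0)\Phi=\W$ by telescoping. The open mapping theorem then upgrades bijectivity to a Banach-space isomorphism.

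Finally, the analytic implicit function theorem applied to $F^*$ produces a neighborhood $U'\subset\mathbb{C}$ of $0$ and an analytic curve $\theta\mapsto w^\theta\in\mathcal N_0$ with $w^0=0$ and $F^*(\theta,w^\theta)=0$. Setting
\[
\phi^\theta:=w^\theta+m\in\mathcal N,\qquad \tilde\lambda^\theta:=H^*(\theta,w^\theta)\in L^\infty(\Omega,\mathbb{C}),
\]
the equation $F^*(\theta,w^\theta)=0$ reads $G^*(\theta,w^\theta)=\tilde\lambda^\theta\phi^\theta$, which is exactly $(\L_\omega^\theta)^*\phi^\theta_{\sigma\omega}=\tilde\lambda^\theta_\omega\phi^\theta_\omega$, and the normalization $\tilde\lambda^\theta_\omega=\phi^\theta_{\sigma\omega}(\L_\omega^\theta v^0_\omega)$ is read off from the definition of $H^*$. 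Both $\phi^\theta$ and $\tilde\lambda^\theta$ inherit analyticity from $w^\theta$. The main technical point where care is needed is the convergence of the power series for $G^*$ in the $\mathcal N$-norm; this is the dual analogue of the estimate bounding $\phi_k$ in the proof of Lemma~\ref{U}, and it is the place where the careful construction of the adapted norms (and in particular the rescaling of $\psi$ by $K^2$ in \eqref{eq:condobservable}) pays off.
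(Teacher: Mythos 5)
Your proof is correct and takes exactly the route the paper intends (the paper simply asserts that the argument of Theorem~\ref{thm:geneigenvec} can be repeated in the dual setting, without spelling out the details); you have verified, term by term, that each adapted-norm estimate has the needed dual counterpart --- in particular the twisted boundedness from the remark after Lemma~\ref{24} for well-definedness, the majorization of the power series via \eqref{eq:condobservable} for analyticity of $G^*$, and \eqref{dan2} together with membership in $\mathcal N_0$ for the injectivity/surjectivity of $D_2F^*(0,0)$. The only cosmetic slip is the appearance of $K(\sigma^{-1}\omega)$ in the geometric ratio, inherited from the primal proof's index shift; in the dual setting the natural index is $K(\omega)$, but since $\sigma$ is measure-preserving the essential supremum is the same.
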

	We end this section by a remark on the generalized eigenvalues $\tilde\lambda^\theta$ and $\lambda^\theta$:
	\begin{remark}
		A priori, the quantities $\lambda^\theta$, given by \eqref{eq:geneigenvec} and $\tilde\lambda^\theta$, given by \eqref{eq:geneigenvec*} have no reason to coincide. Nevertheless, they do. 
		\par\noindent Indeed, by linearity, we may choose a solution $\phi^\theta$ of \eqref{eq:geneigenvec*} normalized by $\phi^\theta_\omega(v^\theta_\omega)=1$, $\mathds P$-a.e.
		Evaluating both sides of \eqref{eq:geneigenvec*} at $v_\omega^\theta$, one gets, for a.e $\omega\in\Omega$
		\begin{align*}
			\lambda_\omega^\theta\phi^\theta_{\sigma\omega}(v_{\sigma\omega}^\theta)&=\phi^\theta_{\sigma\omega}(\L_\omega^\theta v^\theta_\omega)\\
			&=\tilde\lambda_\omega^\theta\phi^\theta_\omega(v^\theta_\omega),
		\end{align*}
		from which $\lambda^\theta_\omega=\tilde\lambda^\theta_\omega$ $\mathds P$-a.e.
	\end{remark}
	\subsection{Quasi-compactness and regularity of the top Lyapunov exponent of the twisted cocycle}
	In this subsection, we prove the following theorem:
	\begin{theorem}\label{tqc}
		For $\theta$ sufficiently close to $0$, $\L^\theta=(\L_\omega^\theta)_{\omega \in \Omega}$ is a quasi-compact cocycle. Furthermore, the top Oseledets space $Y_1^\theta(\omega)$ is one-dimensional, spanned by $v^\theta_\omega$ (given by \eqref{eq:geneigenvec}). We write
		\begin{equation}\label{eq:twistedsplit}
			BV=Y_1^\theta(\omega)\oplus H^\theta(\omega),
		\end{equation}
		for the associated Oseledets splitting.
	\end{theorem}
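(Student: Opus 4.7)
My plan proceeds in two main steps: first establish quasi-compactness of $\L^\theta$ for $\theta$ near $0$ via a Lasota-Yorke inequality combined with Lemma~\ref{QC}; then identify the top Oseledets subspace with $\mathrm{span}(v_\omega^\theta)$ using a rank-one equivariant projector built from the eigenfunction $v^\theta$ of Theorem~\ref{thm:geneigenvec} and its adjoint analog $\phi^\theta$ from Proposition~\ref{adjoint}.

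For quasi-compactness, I would use the identity $\L_\omega^{\theta,N}\phi = \L_\omega^N\bigl(e^{\theta S_N\psi(\omega,\cdot)}\phi\bigr)$, which follows directly from the definition of $\L_\omega^\theta$ and the duality relation for $\L_\omega$. Combining this with the LY estimate~\eqref{strongLY} for $\L$, properties (V3), (V8), (V9), and the uniform bound~\eqref{b}, and absorbing lower order terms using $\|\phi\|_\infty \le C_{var}\|\phi\|_{BV}$, one obtains an inequality of the form $\var(\L_\omega^{\theta,N}\phi) \le \alpha^N_\theta(\omega)\|\phi\|_{BV} + \beta^N_\theta(\omega)\|\phi\|_1$ with log-integrable $\beta^N_\theta$ and with $\alpha^N_\theta(\omega) \to \alpha^N(\omega)$ pointwise as $\theta\to 0$, dominated by a log-integrable bound. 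Since $\int \log\alpha^N\,d\mathbb P <0$ by~\eqref{qc} and since by Lemma~\ref{lemma:derivatives} together with \eqref{center} one has $\frac{d\lambda^\theta}{d\theta}(0)=0$, hence $\Lambda(\L^\theta) \ge \int\log|\lambda^\theta_\omega|\,d\mathbb P = O(\theta^2)$, dominated convergence gives $\int\log\alpha^N_\theta\,d\mathbb P < N\Lambda(\L^\theta)$ for $|\theta|$ small. The analogous weak LY on $\L_\omega^\theta$ follows immediately from~\eqref{weakLY} and the uniform bound on $\|e^{\theta\psi_\omega}\|_{BV}$ provided by~\eqref{b}. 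Lemma~\ref{QC} then yields quasi-compactness of $\L^\theta$.

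For the identification of the top Oseledets space, I would normalize $\phi^\theta_\omega(v^\theta_\omega) = 1$ (possible for $\theta$ near $0$, since $\phi^0_\omega = m$ and $m(v^0_\omega)=1$) and define the rank-one projector $P^\theta(\omega)\phi := \phi^\theta_\omega(\phi) v^\theta_\omega$. Using~\eqref{eq:geneigenvec}, \eqref{eq:geneigenvec*} and $\tilde\lambda^\theta = \lambda^\theta$ (from the final remark of the previous subsection), one directly verifies equivariance $\L_\omega^\theta P^\theta(\omega) = P^\theta(\sigma\omega)\L_\omega^\theta$, so we obtain an $\L^\theta$-equivariant splitting $BV = \mathrm{span}(v_\omega^\theta) \oplus H^\theta(\omega)$ with $H^\theta(\omega) := \ker \phi^\theta_\omega$. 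Since $\log|\lambda^\theta|\in L^\infty(\Omega)$ (essential bounds given by Lemma~\ref{U}, ensuring $H(\theta,w^\theta)\ge 1/2$), Birkhoff's theorem together with temperedness of $\|v_\omega^\theta\|_{BV}$ (a consequence of $v^\theta\in\mathcal S$ and~\eqref{LN}) shows that the Lyapunov exponent of $\L^\theta$ along $v_\omega^\theta$ equals $\Lambda(\theta) := \int \log|\lambda^\theta_\omega|\,d\mathbb P$.

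The main obstacle is to show that the Lyapunov exponent on $H^\theta(\omega)$ is strictly less than $\Lambda(\theta)$, yielding a one-dimensional top Oseledets space. The idea is that the analytic dependence of $v^\theta$ and $\phi^\theta$ on $\theta$ (in $\mathcal S$ and $\mathcal N$ respectively, as produced by Theorem~\ref{thm:geneigenvec} and Proposition~\ref{adjoint}) implies that $P^\theta(\omega)$ is an $O(|\theta|)$-perturbation of $P^0(\omega)\phi = \bigl(\int_X\phi\,dm\bigr) v^0_\omega$ in the operator norm associated to the adapted norms of Proposition~\ref{AN}, uniformly in $\omega$. But $\ker P^0(\omega) = BV^0$, on which~\eqref{an2} furnishes uniform exponential contraction at rate $e^{-\lambda}$ in adapted norm. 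A perturbation argument similar in spirit to the proof of Proposition~\ref{91}, exploiting also the uniform upper bound~\eqref{an3} (and its twisted version in Lemma~\ref{24}), transfers this contraction to $H^\theta(\omega)$ at a rate $e^{-\lambda+O(|\theta|)}$, uniformly in $\omega$. Since $\Lambda(\theta) = O(\theta^2)$ while the growth on $H^\theta(\omega)$ is at most $-\lambda + O(|\theta|) < 0$, for $\theta$ sufficiently small we obtain $Y_1^\theta(\omega) = \mathrm{span}(v_\omega^\theta)$, as claimed. This perturbation transfer, requiring uniform-in-$\omega$ control of the projection perturbation, is where the construction of the adapted norms plays its most essential role.
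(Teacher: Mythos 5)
Your step toward quasi-compactness has a genuine gap. Writing $\L_\omega^{\theta,N}\phi = \L_\omega^N\bigl(e^{\theta S_N\psi(\omega,\cdot)}\phi\bigr)$ and then applying \eqref{strongLY} to $h=e^{\theta S_N\psi}\phi$ requires $h\in BV$ and, for your dominated-convergence argument, a bound on $\var\bigl(e^{\theta S_N\psi}\bigr)$ that vanishes as $\theta\to 0$ with log-integrable domination in $\omega$. This in turn requires control of $\var\bigl(S_N\psi(\omega,\cdot)\bigr)$, hence of $\var\bigl(\psi_{\sigma^i\omega}\circ T_\omega^i\bigr)$ for $1\le i\le N-1$. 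The abstract axioms (V1)--(V9) say nothing about composition with $T_\omega$; the transfer operator is bounded on $BV$ by \eqref{weakLY}, but the Koopman operator $h\mapsto h\circ T_\omega$ need not be, so no such bound is available in the generality of Definition~\ref{good}. Even in the concrete Lasota--Yorke case, $\var\bigl(\psi_{\sigma^i\omega}\circ T_\omega^i\bigr)$ grows like the branch count of $T_\omega^i$, so your coefficient $\alpha_\theta^N(\omega)$ is $\alpha^N(\omega) + |\theta|\cdot O\bigl(\prod_{k<N}N_{\sigma^k\omega}\bigr)$, and the log-integrable domination you invoke would need extra hypotheses. The paper sidesteps this entirely by never touching $S_N\psi$: it telescopes
\[
\L_\omega^{\theta,N}-\L_\omega^N = \sum_{j=0}^{N-1}\L_{\sigma^{N-j}\omega}^{\theta,j}\bigl(\L_{\sigma^{N-1-j}\omega}^\theta - \L_{\sigma^{N-1-j}\omega}\bigr)\L_\omega^{N-1-j},
\]
so that only the single-step factor $e^{\theta\psi_{\sigma^{N-1-j}\omega}}-1$ (whose $BV$-norm is $O(|\theta|)$ uniformly by \eqref{b}) ever appears, the remaining factors being handled by \eqref{weakLY} and Lemma~\ref{wLY}. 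This yields \eqref{34} with the required log-integrability built in, after which Lemma~\ref{QC} applies as you intend.

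The second half of your proposal---the rank-one equivariant projector $P^\theta(\omega)=\phi^\theta_\omega(\cdot)\,v^\theta_\omega$ built from Theorem~\ref{thm:geneigenvec} and Proposition~\ref{adjoint}, with the uniform contraction from \eqref{an2} on $BV^0$ transferred to $\ker\phi^\theta_\omega$ by a small-$\theta$ perturbation in the adapted norms---is sound and is essentially the mechanism the paper deploys later through the operators $\mathcal Q_\omega^\theta$ in the proof of Lemma~\ref{444}. The paper itself instead cites an abstract Oseledets-perturbation lemma from \cite{D} to conclude one-dimensionality of $Y_1^\theta(\omega)$; your more self-contained route is a legitimate alternative, and correctly identifies the uniformization afforded by the adapted norms as the key enabling ingredient.
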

	We begin with the following lemma.
	\begin{lemma}\label{wLY}
		There exists a random variable $\tilde C \colon \Omega \to \Omega$ such that $\log \tilde C\in L^1(\Omega)$ and 
		\[
		\|\L_\omega^\theta\phi\|_{BV}\le \tilde C(\omega)\|\phi\|_{BV}, \quad \text{for $\mathbb P$-a.e. $\omega \in \Omega$, $|\theta | \le 1$ and $\phi \in BV$.}
		\]
	\end{lemma}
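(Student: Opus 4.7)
My plan is to decompose $\L_\omega^\theta \phi = \L_\omega(e^{\theta \psi_\omega}\phi)$ and control each factor separately, first applying the weak Lasota-Yorke bound \eqref{weakLY} to absorb the $\L_\omega$ and then controlling the multiplier $e^{\theta\psi_\omega}$ in $BV$ via the algebra property and property (V9).

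First I would use \eqref{weakLY} to write
\[
\|\L_\omega^\theta \phi\|_{BV} = \|\L_\omega(e^{\theta \psi_\omega}\phi)\|_{BV} \le C(\omega)\,\|e^{\theta\psi_\omega}\phi\|_{BV},
\]
and then apply the $BV$-algebra estimate $\|fg\|_{BV}\le C_{var}\|f\|_{BV}\|g\|_{BV}$ (noted in Remark after the list (V1)--(V9)) to obtain
\[
\|e^{\theta\psi_\omega}\phi\|_{BV} \le C_{var}\,\|e^{\theta\psi_\omega}\|_{BV}\,\|\phi\|_{BV}.
\]

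Next I would bound $\|e^{\theta\psi_\omega}\|_{BV}$ by a constant independent of $\omega$ (using only $|\theta|\le 1$). By (V9) applied to the $C^1$ function $h(t)=e^{\theta t}$ on the interval $[-M_\omega,M_\omega]$ with $M_\omega=\|\psi_\omega\|_\infty$, one gets
\[
\var(e^{\theta\psi_\omega}) \le |\theta|\,e^{|\theta| M_\omega}\,\var(\psi_\omega),
\]
while $\|e^{\theta\psi_\omega}\|_1\le e^{|\theta|M_\omega}$ since $m$ is a probability measure. Hence for $|\theta|\le 1$,
\[
\|e^{\theta\psi_\omega}\|_{BV} \le e^{M_\omega}\bigl(1+\var(\psi_\omega)\bigr).
\]
By property (V3) we have $M_\omega\le C_{var}\|\psi_\omega\|_{BV}$, and the essential boundedness of $\|\psi_\omega\|_{BV}$ noted in~\eqref{b} yields a deterministic constant $M_0$ with $\|e^{\theta\psi_\omega}\|_{BV}\le M_0$ for $\mathbb P$-a.e.\ $\omega$ and every $|\theta|\le 1$.

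Combining, one obtains the desired estimate with
\[
\tilde C(\omega) := C_{var}\,M_0\,C(\omega),
\]
which is log-integrable since $\log C\in L^1(\Omega,\mathbb P)$ by the second bullet of Definition~\ref{good}. The only mildly subtle point is the appeal to (V9), but since $\psi_\omega$ is uniformly bounded in $BV$ (and hence uniformly bounded in $L^\infty$), there is no obstacle; the proof is essentially a bookkeeping exercise once one notices that the $\theta$-dependence can be absorbed into a deterministic constant.
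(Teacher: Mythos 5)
Your proof is correct and follows essentially the same route as the paper: apply \eqref{weakLY} to reduce to bounding $\|e^{\theta\psi_\omega}\phi\|_{BV}$, observe that this bound is uniform in $\omega$ thanks to~\eqref{b}, and take $\tilde C(\omega)=C(\omega)$ times that constant. The only difference is that the paper cites the computation from~\cite[Lemma 3.2]{D}, which bounds $\|e^{\theta\psi_\omega}\phi\|_{BV}\le e^{|\Re\theta|\,\|\psi_\omega\|_\infty}\bigl(1+|\theta|C_{var}\var(\psi_\omega)\bigr)\|\phi\|_{BV}$ directly via (V8)--(V9), whereas you first invoke the algebra bound $\|fg\|_{BV}\le C_{var}\|f\|_{BV}\|g\|_{BV}$ and then estimate $\|e^{\theta\psi_\omega}\|_{BV}$; this is marginally less sharp but self-contained, and both lead to the same conclusion.
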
	
	\begin{proof}
		By~\eqref{weakLY}, we have that 
		\[
		\|\L_\omega^\theta\phi\|_{BV}=\|\L_\omega(e^{\theta\psi_\omega}\phi)\|_{BV}\le C(\omega)\|e^{\theta\psi_\omega}\phi\|_{BV}.
		\]
		On the other hand, it is shown in the proof of~\cite[Lemma 3.2]{D} that 
		\[
		\begin{split}
			\|e^{\theta\psi_\omega}\phi\|_{BV} &\le e^{|\Re(\theta)| \cdot \|\psi_\omega\|_{\infty}}\left(1+|\theta|C_{var}\var(\psi_\omega)\right)\|\phi \|_{BV}  \\
			&\le e^{ \|\psi_\omega\|_{\infty}}\left(1+C_{var}\var(\psi_\omega)\right)\|\phi \|_{BV}.
		\end{split}
		\]
		Hence, 
		\[
		\|\L_\omega^\theta\phi\|_{BV} \le C(\omega)\esssup_{\omega \in \Omega} \bigg (e^{ \|\psi_\omega\|_{\infty}}\left(1+C_{var}\var(\psi_\omega)\right) \bigg )\|\phi \|_{BV},
		\]
		for $\mathbb P$-a.e. $\omega \in \Omega$, $|\theta | \le 1$ and $\phi \in BV$.  In order to complete the proof, it remains to recall that $\log C\in L^1(\Omega)$ and to note that
		\[
		\esssup_{\omega \in \Omega} \bigg (e^{ \|\psi_\omega\|_{\infty}}\left(1+C_{var}\var(\psi_\omega)\right) \bigg  ) <+\infty, 
		\]
		which follows from~\eqref{eq:condobservable} (see Remark~\ref{r0}).
	\end{proof}
	
	\begin{remark}\label{228}
		Observe that in the proof of Lemma~\ref{wLY} we actually showed that $\|\L_\omega^\theta\|_{BV}$ was bounded a positive, log-integrable random variable $C_\theta$, going to $C$ a.s as $\theta\to 0$, and dominated by a (still log-integrable) $\tilde C$ of the form $cC$ for some constant $c>0$.
	\end{remark}
	
	It follows from Lemma~\ref{wLY} that the top Lyapunov exponent of the twisted coycle $\L^\theta=(\L_\omega^\theta)_{\omega \in \Omega}$ exists for each $|\theta | \le 1$. We will denote it by $\Lambda (\theta)$. Following~\cite{D}, we introduce an auxiliary quantity. 
	More precisely, for $\theta \in U$ let 
	\[
	\hat{\Lambda} (\theta)=\int_{\Omega}\log|\lambda_\omega^\theta | \, d\mathbb P(\omega),
	\]
	where $\lambda_\omega^\theta$ is given by \eqref{eq:geneigenvec}.
	For $\theta \in U$ and $\omega \in \Omega$, set
	\[
	\L_\omega^{\theta, n}:=\L_{\sigma^{n-1} \omega}^\theta \circ \ldots \circ \L_\omega^\theta. 
	\]
	We recall (see~\cite[Lemma 3.3]{D}) that 
	\begin{equation}\label{tbs}
		\L_\omega^{\theta, n}(\phi)=\L_\omega^n (e^{ \theta S_n \psi(\omega, \cdot) } \phi), 
	\end{equation}
	where
	\begin{equation}\label{bs}
		S_n \psi (\omega, \cdot)=\sum_{i=0}^{n-1} \psi(\sigma^i \omega, T_\omega^i(\cdot)),
	\end{equation}
	and 
	\[
	T_\omega^i=T_{\sigma^{i-1} \omega} \circ \ldots \circ T_\omega. 
	\]
	The following is a version of~\cite[Lemma 3.8.]{D} in our setting. 
	\begin{lemma}
		For $\theta \in U$, we have that $\Lambda (\theta) \ge \hat{\Lambda} (\theta)$.
	\end{lemma}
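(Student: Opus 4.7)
The natural approach is to test the operator norm of $\mathcal L_\omega^{\theta,n}$ against the generalized eigenvector $v_\omega^\theta$ produced by Theorem~\ref{thm:geneigenvec}. Iterating the eigenvalue relation \eqref{eq:geneigenvec} gives
\[
\mathcal L_\omega^{\theta,n} v_\omega^\theta \;=\; \lambda_\omega^{\theta,n}\, v_{\sigma^n\omega}^\theta, \qquad \text{where }\lambda_\omega^{\theta,n}:=\prod_{i=0}^{n-1}\lambda_{\sigma^i\omega}^\theta,
\]
and hence
\[
\|\mathcal L_\omega^{\theta,n}\|_{BV}\;\ge\; \frac{\|\mathcal L_\omega^{\theta,n} v_\omega^\theta\|_{BV}}{\|v_\omega^\theta\|_{BV}}\;=\;|\lambda_\omega^{\theta,n}|\cdot\frac{\|v_{\sigma^n\omega}^\theta\|_{BV}}{\|v_\omega^\theta\|_{BV}}.
\]
Taking $\frac{1}{n}\log$ of both sides, I plan to show that the left-hand side converges to $\Lambda(\theta)$, the logarithm of $|\lambda_\omega^{\theta,n}|$ gives an ergodic average converging to $\hat\Lambda(\theta)$, and the boundary ratio contributes zero in the limit.

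The convergence of $\frac{1}{n}\log\|\mathcal L_\omega^{\theta,n}\|_{BV}$ to $\Lambda(\theta)$ is Kingman's theorem, whose hypothesis $\log^+\|\mathcal L_\omega^\theta\|_{BV}\in L^1(\Omega,\mathbb P)$ follows from Lemma~\ref{wLY}. For the ergodic average, note that by Theorem~\ref{thm:geneigenvec} the map $\theta\mapsto \lambda^\theta\in L^\infty(\Omega)$ is analytic with $\lambda^0\equiv 1$; thus, for $\theta$ close enough to $0$, $\log|\lambda^\theta|\in L^\infty(\Omega)\subset L^1(\Omega,\mathbb P)$, and Birkhoff's theorem gives
\[
\lim_{n\to\infty}\frac{1}{n}\log|\lambda_\omega^{\theta,n}|\;=\;\int_\Omega \log|\lambda_\omega^\theta|\,d\mathbb P(\omega)\;=\;\hat\Lambda(\theta),
\]
for $\mathbb P$-a.e.\ $\omega$.

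The main point, which will rely on the adapted norms, is controlling the boundary term $\frac{1}{n}\log\bigl(\|v_{\sigma^n\omega}^\theta\|_{BV}/\|v_\omega^\theta\|_{BV}\bigr)$. For the denominator, I will use that $v^\theta=v^0+w^\theta$ with $w^\theta\in\mathcal S_0$, so $\int_X v_\omega^\theta\,dm=1$, which yields the uniform lower bound $\|v_\omega^\theta\|_{BV}\ge \|v_\omega^\theta\|_1\ge 1$. For the numerator, Proposition~\ref{AN} gives $\|v_{\sigma^n\omega}^\theta\|_{BV}\le K(\sigma^n\omega)\,\|v_{\sigma^n\omega}^\theta\|_{\sigma^n\omega}\le K(\sigma^n\omega)\,\|v^\theta\|_{\mathcal S}$, where $K$ is tempered. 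Since $\frac{1}{n}\log K(\sigma^n\omega)\to 0$ for $\mathbb P$-a.e.\ $\omega$, the boundary term vanishes in the limit. Combining these three pieces yields
\[
\Lambda(\theta)\;\ge\;\hat\Lambda(\theta)+\limsup_{n\to\infty}\frac{1}{n}\log\frac{\|v_{\sigma^n\omega}^\theta\|_{BV}}{\|v_\omega^\theta\|_{BV}}\;=\;\hat\Lambda(\theta),
\]
as desired.

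The only delicate step is the boundary estimate; this is precisely where the construction of the adapted norms pays off, as the temperedness of $K$ absorbs the non-uniformity that could otherwise contaminate the subexponential error. Everything else is a routine combination of Kingman's and Birkhoff's theorems applied to the iterated eigenvalue identity.
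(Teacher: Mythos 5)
Your approach is the same as the paper's: test $\|\mathcal L_\omega^{\theta,n}\|_{BV}$ against the eigenvector $v_\omega^\theta$, let Birkhoff handle the eigenvalue product, and argue that the boundary term coming from $v^\theta_{\sigma^n\omega}$ contributes nothing asymptotically. However, your treatment of the boundary term has the two estimates deployed on the wrong factors, and as written the argument does not establish the inequality. You correctly reduce the claim to $\Lambda(\theta)\ge\hat\Lambda(\theta)+\limsup_n b_n$, where $b_n:=\frac1n\log\frac{\|v^\theta_{\sigma^n\omega}\|_{BV}}{\|v^\theta_\omega\|_{BV}}$, so what you need is $\limsup_n b_n\ge 0$. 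This requires a \emph{lower} bound on the \emph{numerator} $\|v^\theta_{\sigma^n\omega}\|_{BV}$: if that quantity decayed exponentially, the boundary term could drag the limit below $\hat\Lambda(\theta)$. Instead, you lower-bound the (fixed, harmless) denominator $\|v^\theta_\omega\|_{BV}$ and upper-bound the numerator by $K(\sigma^n\omega)\|v^\theta\|_{\mathcal S}$; combined, these only give $\limsup_n b_n\le 0$, which is the direction that proves nothing here. The sentence ``the boundary term vanishes in the limit'' is therefore not supported by what you wrote.

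The repair is immediate and you already have the needed fact in hand: the bound $\|v_\omega^\theta\|_{BV}\ge\|v_\omega^\theta\|_1\ge\bigl|\int_X v^\theta_\omega\,dm\bigr|=1$, which you correctly observe is uniform in $\omega$ since $v^\theta-v^0\in\mathcal S_0$, must be applied to $v^\theta_{\sigma^n\omega}$, giving $\|v^\theta_{\sigma^n\omega}\|_{BV}\ge 1$ for every $n$ and hence $\liminf_n b_n\ge -\lim_n\frac1n\log\|v^\theta_\omega\|_{BV}=0$. Once this is done the $K$-temperedness estimate becomes superfluous: you never need the $\limsup\le 0$ direction, and the lower bound on the numerator comes for free from the normalization $\int_X v^\theta_\omega\,dm=1$ without any reference to $K$. (The paper does invoke temperedness of $K$, but only because it detours through the adapted norm $\|\cdot\|_{\sigma^n\omega}$ and the inequality $\|\cdot\|_1\le\|\cdot\|_\omega$ before applying the same normalization lower bound; staying in $\|\cdot\|_{BV}$ as you do is a slightly shorter route.) So the idea is right and essentially identical to the paper's, but the bookkeeping must be corrected: lower-bound the numerator, not the denominator, and you may discard the tempered upper bound altogether.
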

	
	\begin{proof}
		By~\eqref{LN}, \eqref{an1}, \eqref{eq:geneigenvec}  and recalling that $K$ is tempered, we have that 
		\[
		\begin{split}
			\Lambda (\theta) &\ge \limsup_{n\to \infty} \frac 1n \log \| \L_\omega^{\theta, n} v_\omega^\theta \|_{BV}    \\
			&\ge \limsup_{n\to \infty} \frac 1 n \log \bigg (K(\sigma^n \omega)^{-1} \| \L_\omega^{\theta, n} v_\omega^\theta \|_{\sigma^n \omega} \bigg ) \\
			&=\limsup_{n\to \infty} \frac 1 n \log \| \L_\omega^{\theta, n} v_\omega^\theta \|_{\sigma^n \omega}  \\
			&\ge \limsup_{n\to \infty} \frac 1 n \log \| \L_\omega^{\theta, n} v_\omega^\theta \|_1 \\
			&=\limsup_{n\to \infty} \frac 1n \sum_{k=0}^{n-1} \log |\lambda_{\sigma^k \omega}^\theta |+\limsup_{n\to \infty} \frac 1 n \log \|v_{\sigma^n \omega}^\theta \|_1 \\
			&\ge \limsup_{n\to \infty} \frac 1n \sum_{k=0}^{n-1} \log |\lambda_{\sigma^k \omega}^\theta | ,
		\end{split}
		\]
		for $\mathbb P$-a.e. $\omega \in \Omega$, where in the last step we have used that 
		\[
		1=\bigg |\int_X v_\omega^\theta \, dm\bigg | \le \|v_\omega^\theta \|_1.
		\]
		It remains to observe that Birkhoff ergodic theorem implies that 
		\[
		\lim_{n\to \infty} \frac 1n \sum_{k=0}^{n-1} \log |\lambda_{\sigma^k \omega}^\theta |=\hat{\Lambda} (\theta),
		\]
		for $\mathbb P$-a.e. $\omega \in \Omega$.
	\end{proof}
	
	\begin{lemma}\label{difflambda}
		We have that $\hat\Lambda$ is real-analytic and harmonic on a neighborhood of 0. Moreover, $\hat{\Lambda}'(0)=0$.
	\end{lemma}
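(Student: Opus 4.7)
The plan is to lift the analyticity of $\theta\mapsto\lambda^\theta$ (Theorem~\ref{thm:geneigenvec}) to the level of $\hat\Lambda$ by composing with a well-chosen branch of the complex logarithm. First I would observe that $\lambda^0_\omega=1$ for $\mathds P$-a.e.\ $\omega\in\Omega$: setting $\theta=0$ in \eqref{eq:geneigenvec} gives $\L_\omega v^0_\omega=\lambda^0_\omega v^0_{\sigma\omega}$, so integrating against $dm$ and using $\int_X v^0_\omega\,dm=1$ yields $\lambda^0_\omega=1$. Since $U\ni\theta\mapsto\lambda^\theta\in L^\infty(\Omega,\mathbb{C})$ is analytic (hence norm-continuous at $\theta=0$), there exists a smaller neighborhood $U''\subset U$ of $0$ on which $\|\lambda^\theta-1\|_{L^\infty(\Omega)}<1/2$, ensuring that $\lambda^\theta_\omega$ takes values in the disk $B(1,1/2)$ on which the principal branch $\mathrm{Log}$ of the complex logarithm is holomorphic.

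Next, I would argue that the map $U''\ni\theta\mapsto \mathrm{Log}(\lambda^\theta)\in L^\infty(\Omega,\mathbb{C})$ is analytic. The cleanest way is to plug into the power series $\mathrm{Log}(1+z)=\sum_{k\ge 1}(-1)^{k+1}z^k/k$, which converges in $L^\infty(\Omega,\mathbb{C})$ whenever $\|\lambda^\theta-1\|_{L^\infty}<1/2$; by composition of analytic maps, the result is an analytic function of $\theta$ into $L^\infty(\Omega,\mathbb{C})$. Post-composing with the bounded linear functional $g\mapsto\int_\Omega g\,d\mathds P$ from $L^\infty(\Omega)$ to $\mathbb{C}$ yields a holomorphic function $f:U''\to\mathbb{C}$ given by
\begin{equation*}
f(\theta):=\int_\Omega\mathrm{Log}(\lambda^\theta_\omega)\,d\mathds P(\omega).
\end{equation*}
Since $\log|\lambda^\theta_\omega|=\mathrm{Re}\,\mathrm{Log}(\lambda^\theta_\omega)$, one has $\hat\Lambda(\theta)=\mathrm{Re}\,f(\theta)$ on $U''$, which immediately yields that $\hat\Lambda$ is real-analytic and harmonic in a neighborhood of $0$.

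For the vanishing of $\hat\Lambda'(0)$, I would differentiate $f$ at $0$ and use $\lambda^0\equiv 1$ to obtain
\begin{equation*}
f'(0)=\int_\Omega\frac{d\lambda^\theta_\omega}{d\theta}\bigg|_{\theta=0}\,d\mathds P(\omega).
\end{equation*}
By Lemma~\ref{lemma:derivatives}, the integrand equals $\int_X\psi_\omega v^0_\omega\,dm$, which vanishes $\mathds P$-a.s.\ by the centering condition \eqref{center}. Thus $f'(0)=0$, and restricting to real $\theta$ gives $\hat\Lambda'(0)=\mathrm{Re}\,f'(0)=0$.

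There is no real obstacle in this argument: the key point is merely that the analyticity provided by Theorem~\ref{thm:geneigenvec} is in $L^\infty(\Omega)$ and not just pointwise, which is precisely what makes the composition with $\mathrm{Log}$ and integration against $\mathds P$ work uniformly in $\omega$. Once this uniformity is recognized, the proof is a routine composition of analytic maps together with a direct use of the centering condition.
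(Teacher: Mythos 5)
Your proof is correct and takes essentially the same route as the paper: both establish the first claim by exhibiting $\hat\Lambda$ as the real part of a holomorphic function $f(\theta)=\int_\Omega\mathrm{Log}(\lambda_\omega^\theta)\,d\mathds P(\omega)$, and both deduce $\hat\Lambda'(0)=0$ from the derivative formula of Lemma~\ref{lemma:derivatives} together with the centering condition~\eqref{center}. Your write-up is somewhat cleaner in the first step: by exploiting that analyticity holds with values in $L^\infty(\Omega)$ (a Banach algebra), you get the analyticity of $\theta\mapsto\mathrm{Log}(\lambda^\theta)$ via functional calculus and then of $f$ by post-composing with the bounded linear functional $\int_\Omega\cdot\,d\mathds P$, whereas the paper reaches the same conclusion by invoking the dominated convergence theorem to integrate a power series termwise; the two mechanisms are equivalent here since uniformity in $\omega$ makes domination automatic.
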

	
	\begin{proof}
		The first part follows by noting that, by Lemma \ref{lemma:derivatives} and the dominated convergence theorem, $\hat\Lambda$ is the real-part of complex-analytic function.
		\par\noindent The second part is an easy consequence of~\eqref{center} (see~\cite[Lemma 3.11]{D}).
	\end{proof}
	
	We are now able to prove the main theorem of this section:
	\begin{proof}[Proof of Theorem \ref{tqc}]
		Throughout this proof $D>0$ will denote a generic constant (independent on $\omega$, $\theta$ and $\phi$) that can change from line to line. 
		We begin by noting that~\eqref{strongLY} implies that 
		\begin{align*}
			\|\L_{\omega}^{\theta, N} \phi\|_{BV}&\le \|\L_{\omega}^N\phi\|_{BV}+\|(\L_{\omega}^{\theta, N}-\L_\omega^N)\phi\|_{BV}\\
			&\le \alpha^N(\omega)\|\phi\|_{BV}+(K^N(\omega)+1)\|\phi\|_1+\|(\L_{\omega}^{\theta, N}-\L_\omega^N)\phi\|_{BV}.
		\end{align*}
		By using the same arguments as in the proof of Lemma~\ref{24} we have that 
		\begin{align*}
			\|(\L_{\omega}^\theta -\L_\omega)\phi\|_{BV}&=\|\L_\omega\left(e^{\theta\psi_\omega}-1\right)\phi\|_{BV}\\
			&\le C(\omega)\|\left(e^{\theta\psi_\omega}-1\right)\phi\|_{BV}\\
			&\le C(\omega)C_{var}|\theta|e^{\|\psi_\omega\|_\infty}\left(\|\psi_\omega\|_\infty+\var(\psi_\omega)\right) \|\phi \|_{BV}.
		\end{align*}
		Moreover, we have 
		\[(\L_{\omega}^{\theta, N}-\L_\omega^N)\phi=\sum_{j=0}^{N-1}\L_{\sigma^{N-j}\omega}^{\theta, j}\left(\L_{\sigma^{N-1-j}\omega}^{\theta}-\L_{\sigma^{N-1-j}\omega}\right)\L_\omega^{N-1-j}\phi.\]
		By combining the above facts together with~\eqref{weakLY}, Lemma~\ref{wLY} (see also Remark~\ref{228}) and~\eqref{b}, we obtain that 
		\[
		\begin{split}
			& \| \L_{\sigma^{N-j}\omega}^{\theta, j}\left(\L_{\sigma^{N-1-j}\omega}^{\theta}-\L_{\sigma^{N-1-j}\omega}\right)\L_\omega^{N-1-j}\phi \|_{BV} \\
			&\le D |\theta| \prod_{k=0}^{N-1}C(\sigma^k\omega)\|\phi\|_{BV}e^{\|\psi_{\sigma^{N-1-j}\omega}\|_\infty}\left(\|\psi_{\sigma^{N-1-j}\omega}\|_\infty+\var(\psi_{\sigma^{N-1-j}\omega})\right)\\
			&\le D|\theta| \prod_{k=0}^{N-1}C(\sigma^k\omega)\|\phi\|_{BV},
		\end{split}
		\]
		for $\mathbb P$-a.e. $\omega \in \Omega$, $|\theta | \le 1$, $\phi \in BV$ and $j \in \{0, \ldots, N-1\}$. Hence, 
		\[
		\|(\L_{\omega}^{\theta, N}-\L_\omega^N)\phi  \|_{BV} \le  D|\theta| \prod_{k=0}^{N-1}C(\sigma^k\omega)\|\phi\|_{BV},
		\]
		and thus
		\begin{equation}\label{34}
			\|\L_{\omega}^{\theta, N} \phi\|_{BV} \le \bigg (\alpha^N(\omega)+D|\theta| \prod_{k=0}^{N-1}C(\sigma^k\omega) \bigg )\| \phi \|_{BV}+(K^N(\omega)+1)\|\phi\|_1,
		\end{equation}
		for $\mathbb P$-a.e. $\omega \in \Omega$, $|\theta | \le 1$ and $\phi \in BV$. From Lemma~\ref{QC}, Lemma~\ref{wLY} and~\eqref{34}, by arguing as in the proof of~\cite[Theorem 3.12]{D}, we get that the twisted transfer operator cocycle is quasi-compact. 
		\par\noindent The fact that the top Oseledets space is one-dimensional now follows from \cite[Lemma A.3]{D}, arguing as in the second part of the proof of \cite[Theorem 3.12]{D}.
	\end{proof}
	
	\begin{proposition}
		For $\theta$ sufficiently close to $0$, $\Lambda (\theta)=\hat{\Lambda} (\theta)$. In particular, $\Lambda$ is real-analytic and harmonic on  a neighborhood of $0$.
	\end{proposition}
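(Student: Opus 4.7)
The first inequality $\Lambda(\theta) \ge \hat\Lambda(\theta)$ is already established, so the plan is to obtain the reverse inequality by computing the exact growth rate of $\mathcal L^{\theta,n}_\omega$ along the equivariant vector $v^\theta_\omega$. The key observation is that Theorem \ref{tqc} identifies $v^\theta_\omega$ as a generator of the one-dimensional top Oseledets space $Y_1^\theta(\omega)$, provided $v^\theta_\omega\ne 0$; this last point is free because $v^\theta_\omega = v^0_\omega + w^\theta_\omega$ with $w^\theta\in\mathcal S_0$, so $\int_X v^\theta_\omega\, dm = 1$. The multiplicative ergodic theorem (Theorem \ref{MET}) then ensures
\[
\lim_{n\to\infty} \frac 1n \log \|\mathcal L_\omega^{\theta,n} v^\theta_\omega\|_{BV} = \Lambda(\theta) \qquad \text{for $\mathds P$-a.e. $\omega\in\Omega$.}
\]

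Next, I would iterate the generalized eigenvalue relation \eqref{eq:geneigenvec} to obtain
\[
\|\mathcal L_\omega^{\theta,n} v^\theta_\omega\|_{BV} = \prod_{k=0}^{n-1} |\lambda^\theta_{\sigma^k\omega}| \cdot \|v^\theta_{\sigma^n\omega}\|_{BV}.
\]
Taking $\frac 1 n \log$ and applying Birkhoff's ergodic theorem to the first factor yields $\hat\Lambda(\theta)$, provided $\log|\lambda^\theta_\omega|$ is $\mathds P$-integrable. This is the only mildly delicate point: it is handled by noting that $\lambda^0_\omega=1$ and $\theta\mapsto\lambda^\theta\in L^\infty(\Omega)$ is analytic, so restricting to a sufficiently small neighborhood of $0$ we may assume $|\lambda^\theta_\omega|\in[1/2,3/2]$, hence $\log|\lambda^\theta_\omega|$ is uniformly bounded.

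It remains to establish that $\frac 1 n \log \|v^\theta_{\sigma^n\omega}\|_{BV} \to 0$. For the upper bound, \eqref{LN} together with $v^\theta\in\mathcal S$ gives $\|v^\theta_{\sigma^n\omega}\|_{BV} \le K(\sigma^n\omega)\|v^\theta\|_{\mathcal S}$, and temperedness of $K$ finishes this side. For the lower bound,
\[
\|v^\theta_{\sigma^n\omega}\|_{BV} \ge \|v^\theta_{\sigma^n\omega}\|_1 \ge \bigg|\int_X v^\theta_{\sigma^n\omega}\, dm\bigg| = 1,
\]
which is where the normalization on the integral comes into play. Combining gives $\Lambda(\theta) = \hat\Lambda(\theta)$, and the real-analyticity and harmonicity statement is then an immediate consequence of Lemma \ref{difflambda}.

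Overall, this proof is essentially a bookkeeping exercise once Theorem \ref{tqc} and the adapted-norm machinery are available; the only real subtlety is justifying the application of Birkhoff's theorem to $\log|\lambda^\theta|$, which is why one must work in a shrunken neighborhood of $0$.
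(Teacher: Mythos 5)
Your proof is correct and follows essentially the same route the paper references (the argument of \cite[Corollary 3.14]{D}): identify $v^\theta_\omega$ as the generator of the one-dimensional top Oseledets space from Theorem~\ref{tqc}, use the MET to read off $\Lambda(\theta)$ as the growth rate along $v^\theta_\omega$, and compute that same rate from the generalized eigenvalue relation and Birkhoff's theorem, controlling $\tfrac 1n\log\|v^\theta_{\sigma^n\omega}\|_{BV}$ via the normalization $\int_X v^\theta_\omega\,dm=1$ and temperedness of $K$. Your verification that $\log|\lambda^\theta_\omega|$ is uniformly bounded for $\theta$ small (hence integrable) is the right way to justify the Birkhoff step, and the remaining bookkeeping is carried out correctly.
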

	
	\begin{proof}
		This follows from Lemma~\ref{difflambda} and Theorem~\ref{tqc} by arguing exactly  as in the proof of~\cite[Corollary 3.14.]{D}.
	\end{proof}
	
	As a consequence of Theorems \ref{tqc} and \ref{thm:dualcocycle}, one has:
	\begin{cor}
		The dual twisted transfer operator cocycle $(\L^\theta)^*$ is quasi-compact, and its top Oseledets space, $(Y_1^\theta(\omega))^*$ is one-dimensional, spanned by $\phi_\omega^\theta$ given by \eqref{eq:geneigenvec*}.
	\end{cor}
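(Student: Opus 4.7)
The plan is to combine Theorem~\ref{tqc} with Theorem~\ref{thm:dualcocycle} applied to $\L^\theta$. First, quasi-compactness of $(\L^\theta)^*$ is immediate: by the discussion preceding Theorem~\ref{thm:dualcocycle}, $\Lambda((\L^\theta)^*)=\Lambda(\L^\theta)$ and $\kappa((\L^\theta)^*)=\kappa(\L^\theta)$, so quasi-compactness of $\L^\theta$ (Theorem~\ref{tqc}) transfers to its adjoint. Moreover, Theorem~\ref{thm:dualcocycle} ensures that the exceptional exponents of $(\L^\theta)^*$ coincide with those of $\L^\theta$ with the same multiplicities, so one-dimensionality of $(Y_1^\theta(\omega))^*$ follows from the corresponding statement for $Y_1^\theta(\omega)$ established in Theorem~\ref{tqc}.

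It remains to identify $\phi^\theta_\omega$ from \eqref{eq:geneigenvec*} as a generator of $(Y_1^\theta(\omega))^*$. I would proceed by computing the Lyapunov exponent of $\phi^\theta$ under $(\L^\theta)^*$. Using the eigenvalue relation from Proposition~\ref{adjoint}, together with the equality $\tilde\lambda^\theta=\lambda^\theta$ established in the remark following it, an iteration along the $\sigma^{-1}$-orbit yields
\[
((\L^\theta)^*)^n_\omega \phi^\theta_\omega \;=\; \Big(\prod_{k=1}^n \lambda^\theta_{\sigma^{-k}\omega}\Big)\, \phi^\theta_{\sigma^{-n}\omega}.
\]
Birkhoff's ergodic theorem applied to $\sigma^{-1}$ gives that the product term has exponential growth rate $\int_\Omega \log|\lambda^\theta|\,d\mathbb P=\hat\Lambda(\theta)=\Lambda(\theta)$. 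Temperedness of $\|\phi^\theta_{\cdot}\|_{BV^*}$ then forces the second factor to contribute $0$ to the exponential rate: the upper bound comes from $\|\phi^\theta\|_{\mathcal N}<\infty$ via \eqref{DLN} and temperedness of $K$, while a lower bound arises from the normalization $\phi^\theta_\omega(v^0_\omega)=1$ (automatic since $\phi^\theta=m+\Phi^\theta$ with $\Phi^\theta\in\mathcal N_0$) combined with the tempered bound $\|v^0_\omega\|_{BV}\le K(\omega)\|v^0\|_{\mathcal S}$ coming from \eqref{LN} and \eqref{an4}.

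To conclude, I would decompose $\phi^\theta_\omega=y_\omega+h_\omega$ along the dual Oseledets splitting $Y_1^\theta(\omega)^*\oplus H^\theta(\omega)^*$ produced in Step~1. Uniqueness of the splitting, together with the eigenvalue relation, forces $y$ and $h$ to separately satisfy the same equivariance with multipliers $\lambda^\theta$. If $h\not\equiv 0$, iterating would produce growth of $((\L^\theta)^*)^n h$ inside $H^\theta(\omega)^*$ at rate $\Lambda(\theta)$, contradicting the strictly smaller top exponent of $(\L^\theta)^*$ restricted to $H^\theta(\omega)^*$; the temperedness of $h$ needed to run this argument is inherited from that of $\phi^\theta$ together with temperedness of the Oseledets projection onto $Y_1^\theta(\omega)^*$ (a standard MET consequence, cf.\ Proposition~\ref{91}). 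Hence $\phi^\theta_\omega\in Y_1^\theta(\omega)^*$, and since $\phi^\theta_\omega(v^0_\omega)=1$ it is nonzero and therefore spans this one-dimensional space. The main technical point in this outline is the temperedness bookkeeping in the last two paragraphs; apart from this the corollary reduces to assembling previously established results.
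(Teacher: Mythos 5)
Your argument is correct and is essentially the paper's own route: the corollary is stated there without proof, precisely as a consequence of Theorems \ref{tqc} and \ref{thm:dualcocycle} (your first step), and your identification of $\phi^\theta_\omega$ as the generator --- equivariance with multiplier $\lambda^\theta_\omega$, growth rate $\hat\Lambda(\theta)=\Lambda(\theta)$ via the normalization $\phi^\theta_\omega(v^0_\omega)=1$ together with the tempered bounds \eqref{LN}, \eqref{DLN}, then uniqueness of the Oseledets splitting --- merely fills in the detail the paper leaves implicit, in the same spirit as the argument for $v^\theta$ in Theorem \ref{tqc}. The one point to tighten is the final contradiction: the needed subexponential \emph{lower} bound on $\|h_{\sigma^{-n}\omega}\|_{BV^*}$ does not come from temperedness of the Oseledets projections (that only yields the upper bound) but from Poincar\'e recurrence to a positive-measure set $\{\|h_\omega\|_{BV^*}\ge\delta\}$, after observing that $\{h_\omega\neq 0\}$ is $\sigma$-invariant because $\lambda^\theta_\omega\neq 0$ for $\theta$ close to $0$.
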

	
	\section{Limit theorems: new results with ``old'' proofs}
	
	\subsection{Variance}
	By $\mathbb E_\omega(\phi)$ we will denote $\int_X \phi \, d\mu_\omega$, where $\mu_\omega$, $\omega \in \Omega$ are as in the statement of Theorem~\ref{7}. Moreover, $\mu$ will denote the measure on $\Omega \times X$ given by
	\[
	\mu(A\times B)=\int_A\mu_\omega(B)\, d\mathbb P(\omega), \quad \text{for $A\in \mathcal F$ and $B\in \mathcal G$.}
	\]
	We note that $\mu$ is invariant and ergodic for the skew-product transformation $\tau \colon \Omega \times X \to \Omega \times X$ given by
	\[
	\tau(\omega, x)=(\sigma \omega, T_\omega(x)), \quad (\omega, x) \in \Omega \times X.
	\]
	\begin{lemma}\label{var}
		There exists $\Sigma^2\ge 0$ such that
		\begin{equation}\label{variance}
			\lim_{n\to \infty} \frac 1 n \mathbb E_\omega \bigg{(}\sum_{k=0}^{n-1}  \psi_{\sigma^k \omega} \circ T_\omega^k \bigg{)}^2=\Sigma^2, \quad \text{for a.e. $\omega \in \Omega$.}
		\end{equation}
	\end{lemma}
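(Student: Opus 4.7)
The plan is to expand the square, rewrite the off-diagonal terms using transfer operator duality together with the equivariance $\mu_\omega\circ T_\omega^{-k}=\mu_{\sigma^k\omega}$, and then conclude via Birkhoff's ergodic theorem after showing that the non-uniformity of the decay of correlations is absorbed by our choice of observable. More precisely, writing $S_n\psi(\omega,x):=\sum_{k=0}^{n-1}\psi_{\sigma^k\omega}(T_\omega^k x)$, a direct computation using equivariance and the duality relation defining $\L_\omega$ yields
\begin{equation*}
\mathbb E_\omega(S_n\psi)^2=\sum_{k=0}^{n-1} h(\sigma^k\omega)+2\sum_{k=0}^{n-1}\sum_{i=1}^{n-1-k}g_i(\sigma^k\omega),
\end{equation*}
where I set $h(\omega):=\int_X \psi_\omega^2 v_\omega^0\,dm$ and $g_i(\omega):=\int_X \L_\omega^i(\psi_\omega v_\omega^0)\cdot\psi_{\sigma^i\omega}\,dm$.

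The heart of the argument is a uniform-in-$\omega$ exponential bound on $g_i$. By the centering condition \eqref{center}, $\psi_\omega v_\omega^0\in BV^0$, so $\Pi(\omega)(\psi_\omega v_\omega^0)=\psi_\omega v_\omega^0$, and property \eqref{an2} gives
\begin{equation*}
\|\L_\omega^i(\psi_\omega v_\omega^0)\|_{\sigma^i\omega}\le e^{-\lambda i}\|\psi_\omega v_\omega^0\|_\omega.
\end{equation*}
Combining \eqref{LN}, property (V8), \eqref{an4} (which yields $\|v_\omega^0\|_{BV}\le 2K(\omega)$ via \eqref{LN}) and the scaling condition \eqref{eq:condobservable}, I expect to obtain
\begin{equation*}
\|\psi_\omega v_\omega^0\|_\omega\le K(\omega)\|\psi_\omega v_\omega^0\|_{BV}\le 2C_{var}K(\omega)^2\|\psi_\omega\|_{BV}\le C,
\end{equation*}
\emph{uniformly} in $\omega$. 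Together with \eqref{an1} and the uniform bound on $\|\psi_{\sigma^i\omega}\|_\infty$ coming from \eqref{eq:condobservable}, this gives $|g_i(\omega)|\le C'e^{-\lambda i}$ with $C'$ independent of $\omega$. In particular, $G(\omega):=\sum_{i=1}^\infty g_i(\omega)$ is a bounded measurable function on $\Omega$.

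Having these uniform bounds, the endgame is standard: by Birkhoff applied to $h$ and to $G$ (both of which are in $L^\infty(\mathbb P)\subset L^1(\mathbb P)$), the averages $\frac 1n\sum_{k=0}^{n-1}h(\sigma^k\omega)$ and $\frac 1n\sum_{k=0}^{n-1}G(\sigma^k\omega)$ converge a.s.\ to $\int_\Omega h\,d\mathbb P$ and $\int_\Omega G\,d\mathbb P$ respectively. The only remaining piece is the tail $\frac{2}{n}\sum_{k=0}^{n-1}\sum_{i\ge n-k}g_i(\sigma^k\omega)$, which by the uniform bound is dominated by $\frac{C''}{n}\sum_{k=0}^{n-1}e^{-\lambda(n-k)}=O(1/n)$ and hence vanishes. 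This produces the limit
\begin{equation*}
\Sigma^2=\int_{\Omega\times X}\psi^2\,d\mu+2\sum_{i=1}^\infty \int_{\Omega\times X}\psi\cdot(\psi\circ\tau^i)\,d\mu,
\end{equation*}
where the second identity follows by undoing duality and equivariance inside each $g_i$. Non-negativity of $\Sigma^2$ then comes from the obvious lower bound $\mathbb E_\omega(S_n\psi)^2\ge 0$. The main (and essentially only) obstacle is the uniform estimate on $\|\psi_\omega v_\omega^0\|_\omega$: without the adapted norms and the scaling restriction \eqref{eq:condobservable}, the correlations would only be controlled by a tempered, a priori non-integrable random factor, and the Birkhoff step would break down.
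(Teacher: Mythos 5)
Your proposal is correct and essentially reproduces the paper's proof: expand the square, absorb the $\omega$-dependence in the correlation terms via the centering, the decay along $BV^0$, and the scaling condition \eqref{eq:condobservable}, then apply Birkhoff to the resulting $L^\infty$ functions $h$ and $G$ (the paper calls $G$ by the name $\Psi$) and show the tail is $O(1/n)$. The only cosmetic difference is that you run the decay bound through the adapted norm \eqref{an2} plus \eqref{LN}, while the paper invokes \eqref{436} together with the bound $D_1(\omega),\|v_\omega^0\|_{BV}\le K(\omega)$ directly; these are the same estimate packaged differently.
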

	
	\begin{proof}
		By using~\eqref{b} (which is a consequence of~\eqref{eq:condobservable}) and arguing as in the proof of~\cite[Lemma 12]{D1}, we find that 
		\[
		\mathbb E_\omega \bigg{(}\sum_{k=0}^{n-1}  \psi_{\sigma^k \omega} \circ T_\omega^k \bigg{)}^2=\sum_{k=0}^{n-1} \mathbb E_\omega ( \psi_{\sigma^k \omega}^2\circ T_\omega^k )
		+2\sum_{i=0}^{n-1}\sum_{j=i+1}^{n-1}\mathbb E_{\sigma^i \omega}( \psi_{\sigma^i \omega}( \psi_{\sigma^j \omega}\circ T_{\sigma^i \omega}^{j-i}))
		\]
		and 
		\[
		\lim_{n\to \infty} \frac 1 n  \sum_{k=0}^{n-1}  \mathbb E_\omega ( \psi_{\sigma^k \omega}^2\circ T_\omega^k )=\int_{\Omega \times X} \psi(\omega, x)^2\, d\mu (\omega, x),
		\]
		for $\mathbb P$-a.e. $\omega \in \Omega$. Set 
		\[
		\Psi(\omega)=\sum_{n=1}^\infty \int_X   \psi(\omega, x) \psi(\tau^n (\omega, x))\, d\mu_\omega(x)=\sum_{n=1}^\infty \int_X\mathcal L_\omega^n(\psi_\omega v_\omega^0) \psi_{\sigma^n \omega} \, dm.
		\]
		By~\eqref{436} and noting that  $\max\{ D(\omega), \|v_\omega^0 \|_{BV} \} \le K(\omega)$ (see the proof of Proposition~\ref{AN}), we have that 
		\[
		\begin{split}
			\lvert \Psi(\omega)\rvert  &\le \sum_{n=1}^\infty \| \mathcal L_\omega^n(\psi_\omega v_\omega^0) \|_{BV} \cdot \| \psi_{\sigma^n \omega} \|_{BV}\\
			&\le  K(\omega) \esssup_{\omega \in \Omega} \|\psi_\omega \|_{BV}\sum_{n=1}^\infty e^{-\lambda n} \| \psi_\omega v_\omega^0 \|_{BV} \\
			&\le \frac{C_{var} K(\omega)^2\|\psi_\omega \|_{BV}\cdot  \esssup_{\omega \in \Omega} \|\psi_\omega \|_{BV}}{1-e^{-\lambda}},
		\end{split}
		\]
		for $\mathbb P$-a.e. $\omega \in \Omega$. Using~\eqref{eq:condobservable}, we conclude that $\Psi \in L^\infty (\Omega)$ and thus it follows again from Birkhoff ergodic theorem that, 	for $\mathbb P$  a.e. $\omega \in \Omega$,
		\begin{align}\label{DD}
			\notag\lim_{n\to \infty}\frac{1}{n} \sum_{i=0}^{n-1}\Psi(\sigma^i \omega)&=\int_\Omega \Psi(\omega)\, d\mathbb P(\omega)\\
			&=\sum_{n=1}^\infty \int_{ \Omega \times X}\psi (\omega, x)\psi (\tau^n(\omega, x))\, d\mu (\omega, x).
		\end{align}
		Moreover, by arguing as in the proof of~\cite[Lemma 12]{D1}, we have that 
		\[
		\begin{split}
			&  \bigg{\lvert} \sum_{i=0}^{n-1}\sum_{j=i+1}^{n-1}\mathbb E_{\sigma^i \omega}( \psi_{\sigma^i \omega}( \psi_{\sigma^j \omega}\circ T_{\sigma^i \omega}^{j-i}))-\sum_{i=0}^{n-1}\Psi(\sigma^i \omega) \bigg{\rvert} \\
			& \le \sum_{i=0}^{n-1}\sum_{k=n-i}^\infty  \bigg{\lvert}\int_X \mathcal L_{\sigma^i \omega}^k( \psi_{\sigma^i \omega} v^0_{ \sigma^i \omega})  \psi_{\sigma^{k+i} \omega}\, dm \bigg{\rvert}  \\
			&\le \esssup_{\omega \in \Omega} \|\psi_\omega \|_{BV} \cdot \esssup_{\omega \in \Omega} (K(\omega)^2\|\psi_\omega \|_{BV}) \sum_{i=0}^{n-1}\sum_{k=n-i}^\infty e^{-\lambda k}, 
		\end{split} 
		\]
		which implies that 
		\begin{equation}\label{cv}
			\lim_{n\to \infty} \frac 1 n \bigg{(}\sum_{i=0}^{n-1}\sum_{j=i+1}^{n-1}\mathbb E_{\sigma^i \omega}(\psi_{\sigma^i \omega}( \psi_{\sigma^j \omega}\circ T_{\sigma^i \omega}^{j-i}))-\sum_{i=0}^{n-1}\Psi(\sigma^i \omega)\bigg{)}=0,
		\end{equation}
		It follows from~\eqref{DD} and~\eqref{cv} that
		\begin{small}
			\[
			\lim_{n \to \infty} \frac 1 n \sum_{i=0}^{n-1}\sum_{j=i+1}^{n-1}\mathbb E_{\sigma^i \omega}(\psi_{\sigma^i \omega}( \psi_{\sigma^j \omega}\circ T_{\sigma^i \omega}^{j-i}))=\sum_{n=1}^\infty \int_{ \Omega \times X} \psi (\omega, x)
			\psi (\tau^n(\omega, x))\, d\mu (\omega, x)
			\]
		\end{small}
		for a.e. $\omega \in \Omega$ and therefore~\eqref{variance} holds with
		\begin{equation}\label{var2}
			\Sigma^2=\int_{\Omega \times X}\psi(\omega, x)^2\, d\mu (\omega, x)+2\sum_{n=1}^\infty \int_{ \Omega \times X}\psi (\omega, x)
			\psi (\tau^n(\omega, x))\, d\mu (\omega, x).
		\end{equation}
		Finally, we note that it follows readily from~\eqref{variance} that  $\Sigma^2\ge 0$ and the proof of the lemma is completed.
	\end{proof}
	
	The proof of the following result can be done by arguing exactly as in the proof of~\cite[Proposition 3]{D1}.
	\begin{proposition}
		We have that $\Sigma^2=0$ if and only if there exist $\phi \in L^2(\Omega \times X, \mu)$ such that 
		\[
		\psi=\phi-\phi \circ \tau.
		\]
	\end{proposition}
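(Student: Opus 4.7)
The ``if'' direction is a direct computation: if $\psi = \phi - \phi \circ \tau$ with $\phi \in L^2(\Omega \times X, \mu)$, then telescoping yields $S_n\psi = \phi - \phi \circ \tau^n$, so by the $\tau$-invariance of $\mu$ we have $\|S_n\psi\|^2_{L^2(\mu)} \le 4\|\phi\|^2_{L^2(\mu)}$. Integrating the a.e. convergence of Lemma~\ref{var} against $\mathbb P$ and applying Fatou's lemma gives $\Sigma^2 \le \liminf_n \frac{1}{n}\|S_n\psi\|^2_{L^2(\mu)} = 0$.

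For the converse, I would follow the three-step strategy of~\cite[Proposition 3]{D1}. First, one establishes the absolute summability of the correlations $c_n := \langle \psi, \psi \circ \tau^n\rangle_{L^2(\mu)}$, which may be rewritten as
\[ c_n = \int_\Omega \int_X \mathcal{L}_\omega^n(\psi_\omega v_\omega^0)\, \psi_{\sigma^n \omega}\, dm\, d\mathbb P(\omega). \]
By the centering condition~\eqref{center}, $\psi_\omega v_\omega^0 \in BV^0$, so Proposition~\ref{91} gives $\|\mathcal L_\omega^n(\psi_\omega v_\omega^0)\|_{BV} \le D_1(\omega) e^{-\lambda n} \|\psi_\omega v_\omega^0\|_{BV}$. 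Since $\max\{D_1(\omega), \|v_\omega^0\|_{BV}\} \le K(\omega)$ (as in the proof of Proposition~\ref{AN}), and since $\esssup_\omega K(\omega)^2 \|\psi_\omega\|_{BV} < +\infty$ by~\eqref{eq:condobservable}, the random constants are absorbed, yielding $|c_n| \le C e^{-\lambda n}$ for some constant $C$; in particular, $\sum_n n |c_n| < +\infty$.

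Second, expanding the square and using $\Sigma^2 = c_0 + 2\sum_{k \ge 1} c_k$ from~\eqref{var2} gives the identity
\[ \|S_n\psi\|^2_{L^2(\mu)} = n\Sigma^2 - 2\sum_{k=1}^{n-1} k c_k - 2n \sum_{k \ge n} c_k, \]
so that combining $\Sigma^2 = 0$ with $\sum_k k|c_k| < +\infty$ (which also forces $n \sum_{k\ge n}|c_k| \to 0$) yields $\sup_n \|S_n\psi\|_{L^2(\mu)} < +\infty$. Third, a classical spectral argument applied to the Koopman operator $U\colon f \mapsto f \circ \tau$ on $L^2(\Omega \times X,\mu)$ (Leonov's theorem) asserts that $\sup_n \|\sum_{k=0}^{n-1} U^k \psi\|_{L^2} < +\infty$ is equivalent to the existence of $\phi \in L^2(\Omega \times X,\mu)$ with $\psi = \phi - \phi \circ \tau$, completing the proof.

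I expect the first step to be the main technical point: it is here that the non-uniformity of the decay of correlations must be controlled, which is made possible precisely by the scaling built into~\eqref{eq:condobservable} through the tempered variable $K$. Once this summability is in hand, the remaining steps are standard $L^2$ spectral theory over the ergodic skew-product $(\Omega \times X, \mu, \tau)$.
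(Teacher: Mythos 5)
Your proposal is correct and follows exactly the route the paper prescribes, which is simply to ``argue exactly as in the proof of \cite[Proposition 3]{D1}'': a direct computation for the ``if'' direction, exponential summability of the correlations $c_n$, the identity $\|S_n\psi\|_{L^2(\mu)}^2 = n\Sigma^2 - 2\sum_{k=1}^{n-1}kc_k - 2n\sum_{k\ge n}c_k$, and the standard weak-compactness/Leonov argument for the Koopman isometry $U$. You have also correctly identified the only genuinely new technical point relative to \cite{D1}: the bound $|c_n|\le Ce^{-\lambda n}$ now requires the non-uniform constants $D_1(\omega)\|v_\omega^0\|_{BV}\le K(\omega)^2$ to be absorbed via the hypothesis~\eqref{eq:condobservable}, which is precisely the estimate already carried out in the proof of Lemma~\ref{var}.
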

	
	\begin{cor}\label{cor:2difflambda}
		One has $\Lambda''(0)=\Sigma^2$. In particular, if $\Sigma^2>0$, $\Lambda$ is strictly convex on some neighborhood of $0$. 
	\end{cor}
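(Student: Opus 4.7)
The plan is to compute $\Lambda''(0)$ directly from the formula $\Lambda(\theta)=\hat\Lambda(\theta)=\int_\Omega \log|\lambda_\omega^\theta|\,d\mathds P(\omega)$ (valid on a neighborhood of $0$ by the preceding proposition), and match the result against \eqref{var2}. Since the map $\theta\mapsto \lambda^\theta\in L^\infty(\Omega)$ is analytic by Theorem \ref{thm:geneigenvec}, and since the normalization $\L_\omega v_\omega^0=v_{\sigma\omega}^0$ forces $\lambda_\omega^0\equiv 1$, the logarithm $\log\lambda_\omega^\theta$ is itself analytic in $\theta$, uniformly in $\omega$, on a (possibly smaller) neighborhood of $0$. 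Consequently, we may differentiate under the integral sign, and the identity $(\log\lambda)'=\lambda'/\lambda$, $(\log\lambda)''=\lambda''/\lambda-(\lambda'/\lambda)^2$ combined with $\lambda_\omega^0=1$ and $(d\lambda_\omega^\theta/d\theta)(0)=0$ from Lemma \ref{lemma:derivatives} yields
\[
\Lambda''(0)=\int_\Omega \frac{d^2\lambda_\omega^\theta}{d\theta^2}(0)\,d\mathds P(\omega).
\]

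Substituting the formula from Lemma \ref{lemma:derivatives} gives
\[
\Lambda''(0)=\int_\Omega\int_X \psi_\omega^2 v_\omega^0\,dm\,d\mathds P(\omega)+2\sum_{j=1}^\infty\int_\Omega\int_X \psi_\omega\cdot\L_{\sigma^{-j}\omega}^j(\psi_{\sigma^{-j}\omega}v^0_{\sigma^{-j}\omega})\,dm\,d\mathds P(\omega).
\]
The first term equals $\int_{\Omega\times X}\psi^2\,d\mu$ since $d\mu_\omega=v_\omega^0\,dm$. For each term of the series, the duality defining the transfer operator gives
\[
\int_X \psi_\omega\cdot\L_{\sigma^{-j}\omega}^j(\psi_{\sigma^{-j}\omega}v^0_{\sigma^{-j}\omega})\,dm=\int_X \psi_{\sigma^{-j}\omega}(\psi_\omega\circ T_{\sigma^{-j}\omega}^j)v^0_{\sigma^{-j}\omega}\,dm.
\]
After integrating in $\omega$ and applying the change of variable $\omega\mapsto\sigma^j\omega$ (using $\sigma$-invariance of $\mathds P$), this becomes $\int_{\Omega\times X}\psi\cdot(\psi\circ\tau^j)\,d\mu$. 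Summing the series, the expression we obtain for $\Lambda''(0)$ coincides exactly with \eqref{var2}, so $\Lambda''(0)=\Sigma^2$.

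For strict convexity, recall that $\Lambda$ is real-analytic on a neighborhood $U_0$ of $0$ by the preceding proposition, and hence so is $\Lambda''$. If $\Sigma^2>0$, then $\Lambda''(0)>0$, and by continuity $\Lambda''>0$ on a (possibly smaller) neighborhood of $0$, which yields strict convexity there.

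The only non-routine point is the justification of exchanging the sum, integral, and differentiation: this is controlled by the analyticity of $\theta\mapsto \lambda^\theta$ in $L^\infty(\Omega)$ together with Cauchy estimates, which allow uniform-in-$\omega$ domination of the Taylor coefficients on a disc around $0$; alternatively, one may invoke the analogous computation in \cite[Corollary 3.15]{D}, which carries over verbatim once the analytic framework of Theorem \ref{thm:geneigenvec} is in place.
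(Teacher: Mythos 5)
Your proof is correct and follows exactly the route the paper indicates: the paper's one-line proof points to Lemma \ref{lemma:derivatives} together with the formula \eqref{var2}, and you carry out precisely that matching (duality for the transfer operator, $\sigma$-invariance of $\mathds P$ for the change of variable) plus the standard real-analyticity argument for strict convexity. Your verification that $\lambda_\omega^0\equiv 1$ and the reduction $\Lambda''(0)=\int_\Omega \partial_\theta^2\lambda_\omega^\theta|_{\theta=0}\,d\mathds P$ are the details the paper leaves to the reader and to \cite[Section 3.6]{D}.
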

	\begin{proof}
		The value of $\Lambda''(0)$ is easily obtained by Lemma \ref{lemma:derivatives} and \eqref{var2} (see also \cite[Section 3.6]{D} for a similar argument), and the rest is an elementary fact of real analysis.
	\end{proof}
	\subsection{Proof of Theorem \ref{thm:CLT}}  
	In this section, we prove that if $\Sigma^2>0$ then for $\mathbb P$-a.e $\omega\in\Omega$, the process $(\psi_{\sigma^n\omega}\circ T_\omega^n)_{n\ge 0}$ satisfies the central limit theorem.  More precisely, we show that for $\mathbb P$-a.e. $\omega \in \Omega$ and 
	every bounded and continuous $\phi \colon \mathbb R \to \mathbb R$ we have that 
	\[
	\lim_{n\to \infty}\int_X \phi \bigg (\frac{S_n \psi(\omega, x)}{\sqrt n}\bigg )\, d\mu_\omega(x)=\int_{\mathbb R} \phi \, d\mathcal N(0, \Sigma^2),
	\]
	where $\mathcal N(0, \Sigma^2)$ denotes the normal distribution (with parameters $0$ and $\Sigma^2$) and $S_n \psi$ is given by~\eqref{bs}.
	
	In order to establish the above claim, we essentially follow \cite[Proof of Thm B.]{D} and the classical method given by Levy's continuity theorem:
	\\Writing $t_n=t/\sqrt n$,  we have that 
	\begin{equation}\label{deco}
		\begin{split}
			\int_X e^{it_nS_n \psi (\omega, \cdot) } v^0_\omega dm&= \int_X \L^{it_n, n}_{\omega}v^0_\omega dm\\
			&=\int_X\L_{\omega}^{it_n, n}(v^0_\omega-v^{it_n}_{\omega}) dm + \int_X \L_{\omega}^{it_n, n}v^{it_n}_{\omega} dm\\
			&= \int_X \L_{\omega}^{it_n, n}(v^0_\omega-v^{it_n}_{\omega}) dm +\prod_{j=0}^{n-1}\lambda^{it_n}_{\sigma^j\omega}\int_X v^{it_n}_{\omega} dm\\
			&= \int_X\L_{\omega}^{it_n, n}(v^0_\omega-v^{it_n}_{\omega}) dm +\prod_{j=0}^{n-1}\lambda^{it_n}_{\sigma^j\omega}
		\end{split}
	\end{equation}
	Observe that~\eqref{an1} and~\eqref{tbs}  imply that 
	\[
	\begin{split}
		\bigg |\int_X\L_{\omega}^{it_n, n}(v^0_\omega-v^{it_n}_{\omega}) dm \bigg | &= \bigg | \int_X \L_\omega^n (e^{it_n S_n \psi (\omega, \cdot)}(v^0_\omega-v^{it_n}_{\omega})) dm \bigg | \\
		&=\bigg | \int_X e^{it_n S_n \psi (\omega, \cdot)}(v^0_\omega-v^{it_n}_{\omega})dm  \bigg | \\
		&\le \|v^0_\omega-v^{it_n}_{\omega} \|_1 \\
		&\le \|v^0_\omega-v^{it_n}_{\omega} \|_\omega \\
		&\le \|v^0-v^{it_n} \|_{\S},
	\end{split}
	\]
	for $\mathbb P$-a.e. $\omega \in \Omega$ and $n\in \N$. Thus, from Theorem~\ref{thm:geneigenvec} we conclude that 
	\[
	\lim_{n\to \infty} \int_X\L_{\omega}^{it_n, n}(v^0_\omega-v^{it_n}_{\omega}) dm =0, \quad \text{for $\mathbb P$-a.e. $\omega \in \Omega$.}
	\]
	Hence, we have to show that 
	\[\lim_{n\to\infty}\prod_{j=0}^{n-1}\lambda^{it_n}_{\sigma^j\omega}=e^{-t^2\Sigma^2/2} \quad \text{for $\mathbb P$-a.e. $\omega \in \Omega$,} \]
	which is equivalent to
	\[\lim_{n\to\infty}\sum_{j=0}^{n-1}\log \lambda^{it_n}_{\sigma^j\omega}=-\frac{t^2\Sigma^2}{2} \quad \text{for $\mathbb P$-a.e. $\omega \in \Omega$.} \]
	Let us consider the map $H$, defined from a neighborhood of $0$ in $\mathbb C$ to $L^\infty(\Omega)$, by $H(\theta)(\omega)=\log \lambda_\omega^\theta$, $\omega \in \Omega$. Up to shrinking its domain, this map is analytic, as the composition of two analytic maps, and satisfies by Lemma \ref{lemma:derivatives} $H(0)(\omega)=0$, $H'(0)(\omega)=0$ and
	\begin{equation*}
		H''(0)(\omega)=\frac{d^2\lambda^\theta_\omega}{d\theta^2}(0)=\int_X \psi_\omega^2 v^0_\omega dm+ 2\sum_{j=1}^\infty\int_X \psi_\omega\left[\L^{(j)}_{\sigma^{-j}\omega}(\psi_{\sigma^{-j}\omega}v^0_{\sigma^{-j}\omega})\right]dm.
	\end{equation*}
	By Taylor's formula at order two, one has
	\begin{equation*}
		H(it_n)(\omega)= -\frac{t^2}{2n}H''(0)(\omega)+R(it_n)(\omega),
	\end{equation*}
	with $R$ the remainder of the series. One then has 
	\begin{equation*}
		\sum_{j=0}^{n-1}H(it_n)(\sigma^j\omega)=- \frac{t^2}{2}\frac{1}{n}\sum_{j=0}^{n-1}H''(0)(\sigma^j\omega)+\sum_{j=0}^{n-1}R(it_n)(\sigma^j\omega).
	\end{equation*}
	By Birkhoff ergodic theorem, as $n\to\infty$,
	\[\frac{1}{n}\sum_{j=0}^{n-1}H''(0)(\sigma^j\omega)\to \int_\Omega H''(0)(\omega)d\mathbb P(\omega)=\Sigma^2, \quad \text{for $\mathbb P$-a.e. $\omega \in  \Omega$.}\]
	For the remainder term, we notice that  one may write $R(\theta)=\theta^2\tilde R(\theta)$, with $\|\tilde R(\theta)\|_{L^\infty}\to 0$ when $\theta\to 0$.  In particular, for any $\epsilon>0$ and $t\in\R\backslash\{0\}$, one may find a $\delta>0$, such that $\|\tilde R(\theta)\|_{L^\infty}\le \frac{\epsilon}{t^2}$ if $|\theta|\le \delta$. There is also a $n_1$, such that for $n\ge n_1$,
	$|it_n|\le \delta$. Putting this together yields, for $n\ge n_1$,
	\[\left|\sum_{j=0}^{n-1}R(it_n)(\sigma^j\omega)\right|\le \frac{t^2}{n}\sum_{j=0}^{n-1}\frac{\epsilon}{t^2}\le \epsilon,\]
	i.e. $\lim_{n\to\infty}\sum_{j=0}^{n-1}R_{\sigma^j\omega}(it_n)=0$, for $\mathbb P$-a.e. $\omega \in \Omega$. The announced result follows. 
	
	\begin{remark}
		As mentioned, our proof of the  quenched C.L.T. is similar to the proof of~\cite[Theorem B]{D}. However, we note that our initial step \eqref{deco} is different from that in~\cite{D}. In particular, we don't need the version of~\cite[Lemma 4.4]{D} in our setting. 
	\end{remark}
	
	\subsection{Proof of Theorem \ref{thm:LDT}}
	In this section, we prove a (quenched) large deviations estimate for the process $(S_n\psi(\omega, \cdot))_{n\ge 0}$. We need the following classical  result.
	\begin{theorem}\label{GET}(G\"artner-Ellis \cite{HennionHerve})
		For  $n\in \N$, let $\mathbb P_n$ be a probability measure on a measurable space $(Y, \mathcal T)$ and let $\mathbb E_n$ denote the corresponding expectation operator. Furthermore,
		let $S_n$ be a real random variable on $(\Omega, \mathcal T)$ and assume that on some interval $[-\theta_{+}, \theta_{+}]$, $\theta_{+}>0$, we have
		\begin{equation}\label{GETE}
			\lim_{n\to \infty} \frac 1n \log \mathbb E_n (e^{\theta S_n})=\psi (\theta),
		\end{equation}
		where $\psi$ is a strictly convex continuously differentiable function satisfying $\psi'(0)=0$. Then, there exists $\epsilon_+>0$
		such that the function $c$ defined by
		\begin{equation}\label{GETE2}
			c(\epsilon)=\sup_{\lvert \theta \rvert \le \theta_+ }\{ \theta \epsilon-\psi (\theta) \}
		\end{equation}
		is
		nonnegative, continuous, strictly convex  on $[-\epsilon_+, \epsilon_+]$,
		vanishing only at $0$ and  such that
		\[
		\lim_{n\to \infty} \frac 1 n \log \mathbb P_n (S_n >n\epsilon)=-c(\epsilon), \quad \text{for every $\epsilon \in (0, \epsilon_+)$.}
		\]
	\end{theorem}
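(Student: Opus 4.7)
I would follow the classical two-step strategy: (i) establish the properties of $c$ via convex duality; (ii) prove the limit by the matching Chebyshev upper bound and Cram\'er tilting lower bound.

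First I would study the auxiliary function $c$. The hypothesis $\mathbb{E}_n(1)=1$ forces $\psi(0)=0$, and combined with $\psi'(0)=0$ and strict convexity of $\psi$ on $[-\theta_+,\theta_+]$, one gets $\psi>0$ off $0$, while $\psi'$ is a continuous strictly increasing bijection from $(-\theta_+,\theta_+)$ onto an open interval. Set $\epsilon_+:=\min\{\psi'(\theta_+),-\psi'(-\theta_+)\}>0$. For $|\epsilon|<\epsilon_+$, the unique $\theta_\epsilon\in(-\theta_+,\theta_+)$ with $\psi'(\theta_\epsilon)=\epsilon$ attains the sup in \eqref{GETE2}, so $c(\epsilon)=\theta_\epsilon\epsilon-\psi(\theta_\epsilon)$ is $C^1$ with $c'(\epsilon)=\theta_\epsilon=(\psi')^{-1}(\epsilon)$. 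Strict monotonicity of $c'$ yields strict convexity of $c$; continuity is inherited from that of $(\psi')^{-1}$; non-negativity from $c(\epsilon)\ge 0\cdot\epsilon-\psi(0)=0$; and $c(0)=-\inf\psi=0$, with strict convexity precluding another zero on $(-\epsilon_+,\epsilon_+)$.

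For the upper bound, fix $\epsilon\in(0,\epsilon_+)$ and $\theta\in(0,\theta_+]$. Markov's inequality gives $\mathbb{P}_n(S_n>n\epsilon)\le e^{-n\theta\epsilon}\mathbb{E}_n(e^{\theta S_n})$; taking $n^{-1}\log$ and invoking \eqref{GETE} yields
\[
\limsup_{n\to\infty}\tfrac{1}{n}\log\mathbb{P}_n(S_n>n\epsilon)\le -(\theta\epsilon-\psi(\theta)).
\]
Since $\psi'(0)=0<\epsilon<\psi'(\theta_+)$, the maximizer $\theta_\epsilon$ lies in $(0,\theta_+)$, so optimizing over $\theta\in(0,\theta_+]$ gives $\limsup\le -c(\epsilon)$.

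The matching lower bound is obtained by Cram\'er's exponential tilt. Pick a small $\delta>0$ with $\epsilon+\delta<\epsilon_+$ and set $\theta':=\theta_{\epsilon+\delta}\in(0,\theta_+)$, so $\psi'(\theta')=\epsilon+\delta$. Introduce
\[
d\mathbb{Q}_n:=\frac{e^{\theta'S_n}}{\mathbb{E}_n(e^{\theta'S_n})}\,d\mathbb{P}_n.
\]
A direct computation using \eqref{GETE} shows that, for $|\eta|$ small enough that $\theta'+\eta\in[-\theta_+,\theta_+]$,
\[
\tfrac{1}{n}\log\mathbb{E}_{\mathbb{Q}_n}(e^{\eta S_n})=\tfrac{1}{n}\log\mathbb{E}_n(e^{(\theta'+\eta)S_n})-\tfrac{1}{n}\log\mathbb{E}_n(e^{\theta'S_n})\;\longrightarrow\;\tilde\psi(\eta):=\psi(\theta'+\eta)-\psi(\theta'),
\]
with $\tilde\psi\in C^1$, strictly convex and $\tilde\psi'(0)=\epsilon+\delta$. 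Applying the just-proved upper bound with $(\mathbb{Q}_n,\tilde\psi)$ in place of $(\mathbb{P}_n,\psi)$ to both $S_n$ and $-S_n$, the probabilities $\mathbb{Q}_n(S_n\le n(\epsilon+\delta/2))$ and $\mathbb{Q}_n(S_n\ge n(\epsilon+3\delta/2))$ decay exponentially, so $\mathbb{Q}_n(n(\epsilon+\delta/2)<S_n<n(\epsilon+3\delta/2))\to 1$. Inverting the change of measure,
\[
\mathbb{P}_n(S_n>n\epsilon)\ge \mathbb{E}_n(e^{\theta'S_n})\,e^{-\theta'n(\epsilon+3\delta/2)}\,\mathbb{Q}_n\bigl(n(\epsilon+\delta/2)<S_n<n(\epsilon+3\delta/2)\bigr),
\]
and taking $n^{-1}\log$ gives $\liminf_n n^{-1}\log\mathbb{P}_n(S_n>n\epsilon)\ge\psi(\theta')-\theta'(\epsilon+3\delta/2)=-c(\epsilon+\delta)-\theta'\delta/2$. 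Letting $\delta\to 0^+$ and using continuity of $c$ and $\epsilon\mapsto\theta_\epsilon$ gives $\liminf\ge -c(\epsilon)$, matching the upper bound. The main obstacle is the lower bound, specifically the concentration of $S_n/n$ under the tilted measure; this is precisely what forces the restriction $\epsilon\in(0,\epsilon_+)$ with $\epsilon_+$ small enough that $\theta_{\epsilon+\delta}$ stays strictly inside $(-\theta_+,\theta_+)$, so that \eqref{GETE} still applies on a two-sided neighborhood of $\theta'$.
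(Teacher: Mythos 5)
The paper offers no proof of this statement: it is imported as a classical G\"artner--Ellis-type theorem from Hennion--Herv\'e \cite{HennionHerve}, so there is no in-paper argument to compare against. Your self-contained proof is the standard one and is correct: the convex-duality analysis of $c$ (with $\epsilon_+=\min\{\psi'(\theta_+),-\psi'(-\theta_+)\}$ and the unique interior maximizer $\theta_\epsilon=(\psi')^{-1}(\epsilon)$), the Chernoff upper bound, and the Cram\'er tilting lower bound via concentration of $S_n/n$ near $\psi'(\theta')$ under $\mathbb{Q}_n$ all go through, and you correctly identify why $\epsilon_+$ must be chosen so that $\theta_{\epsilon+\delta}$ stays strictly inside $(-\theta_+,\theta_+)$. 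Two small points worth tightening. First, the identity $c'(\epsilon)=\theta_\epsilon$ deserves a word of justification, since $(\psi')^{-1}$ is only known to be continuous: either note that $c$ is convex as a supremum of affine functions and that uniqueness of the maximizer gives differentiability (envelope/Danskin), or prove strict convexity directly from uniqueness of the maximizer, writing $c(t\epsilon_1+(1-t)\epsilon_2)\le t\,c(\epsilon_1)+(1-t)\,c(\epsilon_2)$ with equality forcing $\psi'(\theta_*)=\epsilon_1=\epsilon_2$. Second, when you ``apply the just-proved upper bound'' to $(\mathbb{Q}_n,\tilde\psi)$, note that $\tilde\psi'(0)=\epsilon+\delta\neq 0$, so you are not invoking the theorem's statement verbatim but the bare Markov--Chernoff computation with thresholds on either side of $\tilde\psi'(0)$; as you indicate, that computation does give a strictly positive exponential rate for both tails, so the argument is sound.
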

	
	We need the following lemma, whose statement and proof are similar to \cite[Lemma 4.2]{D}. As such, we will omit the proof.
	\begin{lemma}\label{lemma:growthrate}
		Let $\theta\in\mathbb C$ be close enough to $0$. For any $f\in BV$ such that $\phi^\theta_\omega(f)\not=0$, one has
		\begin{equation}
			\lim_{n\to\infty}\frac{1}{n}\log\left|\int_X e^{\theta S_n\psi (\omega, \cdot) }f dm\right|= \Lambda(\theta).
		\end{equation}
	\end{lemma}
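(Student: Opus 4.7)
The plan is to reduce the integral to an application of the (twisted) transfer operator and then decompose the test function along the Oseledets splitting of Theorem~\ref{tqc}. Using \eqref{tbs} and the defining duality relation for $\L_\omega$, one has
\[
\int_X e^{\theta S_n\psi(\omega,\cdot)}f\,dm = \int_X \L_\omega^{\theta,n}f \, dm = m(\L_\omega^{\theta,n} f),
\]
so everything reduces to understanding the asymptotics of $m(\L_\omega^{\theta,n} f)$. For $\theta$ close enough to $0$, by Theorem~\ref{tqc} we may split $f = c_\omega^\theta v_\omega^\theta + g_\omega^\theta$, with $c_\omega^\theta \in \mathbb{C}$ and $g_\omega^\theta \in H^\theta(\omega)$. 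Normalizing $\phi^\theta_\omega$ so that $\phi^\theta_\omega(v^\theta_\omega)=1$ (as in the remark following Proposition~\ref{adjoint}) and noting that $\phi^\theta_\omega$ annihilates $H^\theta(\omega)$ by the relation $Y_1(\omega) = (H^*(\omega))^\circ$ of Theorem~\ref{thm:dualcocycle}, we obtain $c_\omega^\theta = \phi^\theta_\omega(f)$, which is nonzero by hypothesis.

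Next I would compute the two pieces separately. By iterating \eqref{eq:geneigenvec}, the first part gives
\[
\L_\omega^{\theta,n}(c_\omega^\theta v_\omega^\theta) = c_\omega^\theta \prod_{j=0}^{n-1}\lambda_{\sigma^j\omega}^\theta \cdot v_{\sigma^n\omega}^\theta.
\]
Since $v^\theta = v^0 + w^\theta$ with $w^\theta \in \mathcal S_0$, one has $m(v_{\sigma^n\omega}^\theta) = 1$ for $\mathds P$-a.e.\ $\omega$, so the $m$-integral of the first part equals $c_\omega^\theta \prod_{j=0}^{n-1} \lambda_{\sigma^j\omega}^\theta$. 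Taking logarithms and applying Birkhoff's ergodic theorem (noting $\log|\lambda^\theta| \in L^\infty(\Omega)$ by analyticity and the nowhere-vanishing of $\lambda^\theta$ near $0$), we get
\[
\lim_{n\to\infty}\frac{1}{n}\log\Bigl|c_\omega^\theta \prod_{j=0}^{n-1}\lambda_{\sigma^j\omega}^\theta\Bigr| = \hat\Lambda(\theta) = \Lambda(\theta).
\]

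For the contribution of $g_\omega^\theta$, I would use that $g_\omega^\theta$ lies in the slow Oseledets subspace $H^\theta(\omega)$, so for any $\delta > 0$ (chosen less than the gap between $\Lambda(\theta)$ and the next exponent $\lambda_2^\theta$ of the twisted cocycle, which exists by quasi-compactness),
\[
\|\L_\omega^{\theta,n} g_\omega^\theta\|_{BV} \le C_\delta(\omega) e^{(\lambda_2^\theta + \delta)n},
\]
for a measurable (tempered) function $C_\delta$, by the MET. Bounding $|m(\L_\omega^{\theta,n}g_\omega^\theta)| \le \|\L_\omega^{\theta,n}g_\omega^\theta\|_1 \le \|\L_\omega^{\theta,n}g_\omega^\theta\|_{BV}$ shows the second piece grows strictly slower than $e^{\Lambda(\theta) n}$, so it is absorbed by the first piece once $\theta$ is sufficiently small. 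Combining the two estimates yields the claimed limit.

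The main obstacle is ensuring that the estimates are uniform enough in $\omega$: the constants $c_\omega^\theta$, $C_\delta(\omega)$ and the gap $\Lambda(\theta) - \lambda_2^\theta$ all depend non-uniformly on $\omega$ and $\theta$. Temperedness (from the MET) handles the $\omega$-dependence after taking $\frac{1}{n}\log$, and the analytic dependence of the twisted spectral data on $\theta$ (from Theorems~\ref{thm:geneigenvec} and \ref{tqc}) keeps the gap open for $\theta$ in a small neighborhood of $0$; this is exactly the setting where the argument of~\cite[Lemma 4.2]{D} can be transplanted to our adapted-norm framework.
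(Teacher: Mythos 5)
Your argument is correct and coincides with the approach the paper has in mind (it omits the proof, pointing to~\cite[Lemma 4.2]{D}): reduce to $\int_X\L_\omega^{\theta,n}f\,dm$ via~\eqref{tbs}, decompose $f=\phi^\theta_\omega(f)v^\theta_\omega+h$ along the twisted Oseledets splitting of Theorem~\ref{tqc}, note that the top component contributes $\phi^\theta_\omega(f)\prod_{j=0}^{n-1}\lambda^\theta_{\sigma^j\omega}$ whose logarithmic Birkhoff average is $\hat\Lambda(\theta)=\Lambda(\theta)$, and absorb the $H^\theta(\omega)$-component, which grows at a strictly smaller exponential rate by the MET. One cosmetic remark: the relation showing that $\phi^\theta_\omega$ annihilates $H^\theta(\omega)$ is $Y_1^*(\omega)=H(\omega)^\circ$ of Theorem~\ref{thm:dualcocycle} (applied to the twisted cocycle), not $Y_1(\omega)=(H^*(\omega))^\circ$ as you cite, but this does not affect the argument.
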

	\begin{proof}[Proof of quenched large deviations]
		We start by remarking that Lemma \ref{lemma:growthrate} implies that 
		\[\lim_{n\to\infty}\frac{1}{n}\log\left|\int_X e^{\theta S_n\psi(\omega, \cdot)}d\mu_\omega\right|=\Lambda(\theta).\]
		Indeed, since we may write $\int_X e^{\theta S_n\psi(\omega, \cdot) }d\mu_\omega=\int_X e^{\theta S_n\psi(\omega, \cdot)} v_\omega^0 dm$, it suffices to show that $\phi^\theta_\omega(v^0_\omega)\not=0$. But this easily follows from $1=\phi^0_\omega(v^0_\omega)=\int_X v_\omega^0 dm$ and analyticity of $\theta\mapsto\phi_\omega^\theta\in\mathcal N$ in some neighborhood of $0$.
		\par\noindent Hence, since $\Lambda$ is convex in some (small enough) real neighborhood of $0$ (see Corollary \ref{cor:2difflambda}), we may apply Theorem~\ref{GET}
		for each fixed $\omega\in\Omega$ in some full-measure subset, with $\mathds P_n=\mu_\omega$, $S_n=S_n\psi_\omega$ and $\psi(\theta)=\Lambda(\theta)$.
	\end{proof}
	
	\subsection{Proof of Theorem \ref{thm:LCLT}}
	We first establish the following version of~\cite[Lemma 4.7]{DH} in our setting. 
	\begin{lemma}\label{444}
		There exist $C>0$ and $0<r<1$ such that for every $\theta \in \mathbb{C}$ sufficiently close to 0, every $n\in \N$ and $\mathbb P$-a.e. $\omega \in \Omega$, we have
		\[
		\Big| \int_X\mathcal L_\omega^{\theta, n}(v_\omega^0 -\phi_\omega^{\theta}(v_\omega^0) v_{\omega}^{\theta})\, dm \Big|
		\leq Cr^n |\theta |.
		\]
	\end{lemma}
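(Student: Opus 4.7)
The plan is to write $g_\omega^\theta := v_\omega^0 - \phi_\omega^\theta(v_\omega^0)v_\omega^\theta$ and proceed in two stages: (i) show that $g^\theta$ is of size $O(|\theta|)$ in $\mathcal S$ and lies in the Oseledets complement $H^\theta(\omega)$ of the twisted cocycle; (ii) show that $\L^\theta$ contracts uniformly exponentially on $H^\theta$ in a suitable $\theta$-adapted norm. Pairing via $m$ and combining (i)--(ii) will then give the desired bound.

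For (i), the bilinear pairing $\mathcal N\times\mathcal S\to L^\infty(\Omega)$, $(\Phi,\mathcal V)\mapsto(\omega\mapsto\Phi_\omega(\mathcal V_\omega))$, is continuous since $|\Phi_\omega(\mathcal V_\omega)|\le \|\Phi_\omega\|^*_\omega\|\mathcal V_\omega\|_\omega\le \|\Phi\|_{\mathcal N}\|\mathcal V\|_{\mathcal S}$. Combined with the analyticity of $\theta\mapsto v^\theta\in\mathcal S$ (Theorem~\ref{thm:geneigenvec}) and $\theta\mapsto\phi^\theta\in\mathcal N$ (Proposition~\ref{adjoint}), this gives analyticity of $\theta\mapsto g^\theta\in\mathcal S$ near $0$. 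At $\theta=0$, $\phi^0=m$ and $v^\theta|_{\theta=0}=v^0$, so $\phi^0(v^0)=\int_X v_\omega^0\,dm=1$ and $g^0\equiv 0$; hence $\|g^\theta\|_{\mathcal S}\le C_0|\theta|$ for small $|\theta|$. The normalization $\phi_\omega^\theta(v_\omega^\theta)=1$ recalled at the end of Section~3 then gives $\phi_\omega^\theta(g_\omega^\theta)=\phi_\omega^\theta(v_\omega^0)(1-1)=0$, which, combined with the splitting $BV=Y_1^\theta(\omega)\oplus H^\theta(\omega)$ from Theorem~\ref{tqc}, places $g_\omega^\theta\in H^\theta(\omega)$ for $\mathbb P$-a.e.\ $\omega$.

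For (ii), I would construct a $\theta$-adapted norm mirroring Proposition~\ref{AN}: with $\Pi^\theta(\omega)\phi:=\phi-\phi_\omega^\theta(\phi)v_\omega^\theta$ the projection onto $H^\theta(\omega)$ along $\spn(v_\omega^\theta)$, set
\begin{align*}
\|\phi\|_\omega^\theta&:=\sup_{n\ge 0}\bigl(e^{\lambda' n}\|\L_\omega^{\theta,n}\Pi^\theta(\omega)\phi\|_{BV}\bigr)\\
&\phantom{:=}+\frac{1}{D_2^\theta(\omega)}\sup_{n\ge 0}\bigl(e^{-\epsilon n}\|\L_\omega^{\theta,n}(I-\Pi^\theta(\omega))\phi\|_{BV}\bigr)+|\phi_\omega^\theta(\phi)|,
\end{align*}
with $0<\lambda'<-\lambda_2(\theta)$ and $D_i^\theta(\omega)$ the $\theta$-analogues of the tempered variables of Proposition~\ref{91}. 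By Theorem~\ref{tqc} and the analytic perturbation of the top two Lyapunov exponents, the spectral gap $-\lambda_2(\theta)$ is bounded away from zero uniformly in small $|\theta|$, so $\lambda'>0$ can be taken independent of $\theta$ (say $\lambda'=-\lambda_2/2$). A perturbative argument using Lemma~\ref{24} and Lemma~\ref{wLY} then shows that $D_i^\theta$ may be chosen uniformly controlled in $\theta$, whence the analogues of \eqref{LN}--\eqref{an4} and \eqref{DLN}--\eqref{eq:goodLebesgue} hold for $\|\cdot\|^\theta_\omega$ with constants uniform in (small) $\theta$: in particular $\|\L_\omega^{\theta,n}h\|^\theta_{\sigma^n\omega}\le e^{-\lambda' n}\|h\|^\theta_\omega$ for $h\in H^\theta(\omega)$, the dual norm $\|m\|^{\theta,*}_\omega$ is uniformly bounded, and $\|\cdot\|^\theta_\omega\asymp\|\cdot\|_\omega$. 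Pairing via $m$ and combining with (i) then yields
\[
\left|\int_X \L_\omega^{\theta,n}g_\omega^\theta\,dm\right|\le \|m\|^{\theta,*}_{\sigma^n\omega}\,\|\L_\omega^{\theta,n}g_\omega^\theta\|^\theta_{\sigma^n\omega}\le C_2 e^{-\lambda' n}\|g^\theta_\omega\|^\theta_\omega\le C r^n|\theta|,
\]
with $r:=e^{-\lambda'}\in(0,1)$ and $C:=C_0 C_2$.

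The main obstacle is stage (ii): producing the $\theta$-adapted norm with constants uniform in small $|\theta|$. This reduces to a $\theta$-parametrized analog of Proposition~\ref{91} with tempered random variables $D_i^\theta$ that can be chosen so that both the equivalence constant $K^\theta$ (the $\theta$-analog of \eqref{LN}) and the spectral gap remain uniformly controlled in $\theta$. This is a perturbation-theoretic statement accessible from the analyticity results of Theorem~\ref{thm:geneigenvec}, Proposition~\ref{adjoint}, and the quasi-compactness established in Theorem~\ref{tqc}, but carrying it out rigorously is the delicate part of the proof.
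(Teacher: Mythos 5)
Your stage (i) — the bound $\|g^\theta\|_{\mathcal S}=O(|\theta|)$ and the identification $g_\omega^\theta\in H^\theta(\omega)$ — is correct, and it is a legitimate way to produce the $|\theta|$ factor (the paper instead applies the Cauchy integral formula to the scalar function $\theta\mapsto\int_X\L_\omega^{\theta,n}g_\omega^\theta\,dm$, which is bounded by $Cr^n$ uniformly in $n$ and small $\theta$ and vanishes at $\theta=0$; the two routes are equivalent). Stage (ii), however, contains a genuine gap. You propose to construct a full $\theta$-adapted norm $\|\cdot\|_\omega^\theta$ via a $\theta$-parametrized analogue of Proposition~\ref{91}, with tempered regularity functions $D_i^\theta$ that are ``uniformly controlled in $\theta$.'' Nothing in the paper supports this: there is no continuity (let alone analyticity) result for the second twisted Lyapunov exponent $\lambda_2(\theta)$, and the twisted Oseledets regularity functions coming from the argument of~\cite[Proposition~3.2]{BD} applied to $\L^\theta$ have no a priori reason to be tempered uniformly in $\theta$. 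You yourself flag this as ``the delicate part,'' but that is precisely where the plan stalls, and filling it would require substantial machinery not developed in the paper.

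The paper avoids the issue entirely by working in the \emph{original} adapted norm $\|\cdot\|_\omega$ associated to the \emph{untwisted} cocycle. Setting $\mathcal Q_\omega^\theta h:=\L_\omega^\theta\bigl(h-\phi_\omega^\theta(h)v_\omega^\theta\bigr)$, one has $\mathcal Q_\omega^0 h=\L_\omega\Pi(\omega)h$, so~\eqref{an2} gives the one-step contraction $\|\mathcal Q_\omega^0 h\|_{\sigma\omega}\le e^{-\lambda}\|h\|_\omega$. Since $\theta\mapsto v^\theta\in\mathcal S$, $\theta\mapsto\phi^\theta\in\mathcal N$ are analytic (Theorem~\ref{thm:geneigenvec}, Proposition~\ref{adjoint}) and $\L_\omega^\theta$ is uniformly close to $\L_\omega$ in the adapted operator norm for $\theta$ small (Lemma~\ref{24}), $\mathcal Q^\theta$ is a uniformly small perturbation of $\mathcal Q^0$, so $\|\mathcal Q_\omega^\theta h\|_{\sigma\omega}\le r\|h\|_\omega$ for some $r\in(e^{-\lambda},1)$ and all small $|\theta|$. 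Iterating via $\mathcal Q_\omega^{\theta,n}h=\L_\omega^{\theta,n}\bigl(h-\phi_\omega^\theta(h)v_\omega^\theta\bigr)$ gives $\|\L_\omega^{\theta,n}g_\omega^\theta\|_{\sigma^n\omega}\le r^n\|v_\omega^0\|_\omega$, and then~\eqref{an1} (to pass to $\|\cdot\|_1$) and~\eqref{an4} (to bound $\|v_\omega^0\|_\omega$) yield $\bigl|\int_X\L_\omega^{\theta,n}g_\omega^\theta\,dm\bigr|\le Cr^n$, from which Cauchy's formula extracts the $|\theta|$ factor. No twisted adapted norm is needed. You should replace stage (ii) with this perturbative contraction argument in the fixed norm $\|\cdot\|_\omega$; the rest of your outline then goes through.
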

	
	\begin{proof}
		For $\theta$ near 0 and $\omega \in \Omega$, let
		\[
		\mathcal Q_\omega^{\theta} h:=  \mathcal L_\omega^{\theta}(h- \phi_\omega^\theta (h) v_{\omega}^{\theta}), \quad h\in BV.
		\]
		Then, 
		\begin{equation}\label{iter}
			\mathcal Q_\omega^{\theta, n} h =  \mathcal L_\omega^{\theta, n}(h - \phi_\omega^\theta (h) v_{\omega}^{\theta}), \quad \text{for $\omega \in \Omega$, $n\in \N$ and $h\in BV$.}
		\end{equation}
		Observe that~\eqref{an2} gives that
		\begin{equation}\label{deC}
			\| \mathcal Q_\omega^0h \|_{\sigma \omega} \le e^{-\lambda} \|h \|_\omega, \quad \text{for $\mathbb P$-a.e. $\omega \in \Omega$ and $h\in BV$.}
		\end{equation}
		Take $r>0$ such that $e^{-\lambda} <r<1$. It follows from Theorem~\ref{thm:geneigenvec}, Proposition~\ref{adjoint} and~\eqref{deC} that for $\theta$ sufficiently close to $0$, 
		\begin{equation}\label{deC1}
			\|\mathcal Q_\omega^\theta h\|_{\sigma \omega} \le r \|h \|_\omega, \quad \text{for $\mathbb P$-a.e. $\omega \in \Omega$ and $h\in BV$.}
		\end{equation}
		By iterating~\eqref{deC1} and using~\eqref{iter}, we obtain that 
		\begin{equation}\label{deC2}
			\|\mathcal L_\omega^{\theta, n}(h - \phi_\omega^\theta (h) v_{\omega}^{\theta})\|_{\sigma^n \omega} \le r^n \|h \|_\omega \quad \text{for $\mathbb P$-a.e. $\omega \in \Omega$, $n\in \N$ and $h\in BV$,}
		\end{equation}
		whenever $\theta$ is sufficiently close to $0$. Now it follows readily from~\eqref{an1}, \eqref{an4} and~\eqref{deC2} that there exists $C>0$ such that for $\theta$ sufficiently close to $0$, $\mathbb P$-a.e. $\omega \in \Omega$ and $n\in \N$,
		\begin{equation}\label{2016}
			\Big| \int_X\mathcal L_\omega^{\theta, n}(v_\omega^0 -\phi_\omega^{\theta}(v_\omega^0) v_{\omega}^{\theta})\, dm \Big| \le Cr^n.
		\end{equation}
		The conclusion of the lemma now follows from the Cauchy integral formula and a simple observation that the term $\int_X\mathcal L_\omega^{\theta, n}(v_\omega^0 -\phi_\omega^{\theta}(v_\omega^0) v_{\omega}^{\theta})\, dm $ vanishes at $\theta =0$.
	\end{proof}
	
	\begin{remark}
		We note that for the purpose of establishing  the quenched local C.L.T~\eqref{2016} is sufficient. However, the finer conclusion given by Lemma~\ref{444} is needed for the Berry-Essen estimates (see~\cite[Section 4.4]{DH}).
	\end{remark}
	
	We now establish the quenched local C.L.T under the following \emph{aperiodicity} assumption. Namely, we require that for $\mathbb P$-a.e. $\omega \in \Omega$ and every compact interval $J\subset \mathbb R \setminus \{0\}$, there exist $C=C(\omega)>0$ and $\rho \in (0,1)$ such that 
	\begin{equation}\label{ap}
		\| \L_\omega^{it, n} \|_{BV}  \le C\rho^n, \quad \text{for $t\in J$ and $n\ge 0$.}
	\end{equation}
	Following~\cite{D, RE83}, it is sufficient to show that for $h\in L^1(\R)$ whose Fourier transform $\hat h$ has a compact support, 
	\begin{equation}\label{CLTL1}
		\sup_{s\in \R}\bigg{\rvert} \frac{\Sigma}{2\pi}\int_{\R} e^{\frac{its}{\sqrt n}}\hat h\left(\frac{t}{\sqrt n}\right)\int \mathcal L_{\omega}^{\frac{it}{\sqrt n}, n}v_\omega^0\, dm \, dt -
		\frac{\hat h(0)\Sigma}{2\pi} \int_{\R} e^{\frac{its}{\sqrt n}} \cdot e^{-\frac{\Sigma^2 t^2}{2}}\, dt \bigg{\rvert} \to 0,
	\end{equation}
	when $n\to \infty$, for $\mathbb P$-a.e. $\omega \in \Omega$. 
	
	Choose $\delta >0$ such that the support of $\hat h$ is contained in $[-\delta, \delta]$. For any $\tilde \delta \in (0, \delta)$, we have that 
	\begin{align*}
		&\frac{\Sigma}{2\pi}\int_{\R} e^{\frac{its}{\sqrt n}}\hat h\left(\frac{t}{\sqrt n}\right)\int_X \mathcal L_{\omega}^{\frac{it}{\sqrt n}, n}v_\omega^0\, dm \, dt -
		\frac{\hat h(0)\Sigma}{2\pi} \int_{\R} e^{\frac{its}{\sqrt n}} \cdot e^{-\frac{\Sigma^2 t^2}{2}}\, dt \displaybreak[0] \\
		&=\frac{\Sigma}{2\pi} \int_{\lvert t\rvert < \tilde \delta \sqrt n} e^{\frac{its}{\sqrt n}} \Big(\hat h\left(\frac{t}{\sqrt n}\right)\prod_{j=0}^{n-1}\lambda_{\sigma^j \omega}^{\frac{it}{\sqrt n}}-\hat h(0)e^{-\frac{\Sigma^2 t^2}{2}} \Big)\, dt \displaybreak[0] \\
		&\phantom{=}+\frac{\Sigma}{2\pi}\int_{\lvert t\rvert < \tilde \delta \sqrt n}e^{\frac{its}{\sqrt n}}\hat h\left(\frac{t}{\sqrt n}\right)
		\int_X
		\prod_{j=0}^{n-1}\lambda_{\sigma^j \omega}^{\frac{it}{\sqrt n}} \Big( \phi_\omega^{\frac{it}{\sqrt n}}( v_\omega^0 ) v_{\sigma^{n}\omega}^{\frac{it}{\sqrt n}}-1 \Big)\,dm \, dt \displaybreak[0] \\
		&\phantom{=}+\frac{\Sigma \sqrt{n}}{2\pi}\int_{\lvert t\rvert <\tilde \delta}e^{its}\hat h(t)\int_X \mathcal L_{\omega}^{it, (n)} (v_\omega^0 - \phi_\omega^{it}( v_\omega^0 ) v_{\omega}^{it}) \, dm\, dt \displaybreak[0] \\
		&\phantom{=}+\frac{\Sigma \sqrt{n}}{2\pi}\int_{\tilde \delta \le \lvert t\rvert < \delta}e^{its}\hat h(t)\int_X \mathcal L_\omega^{it, (n)}v_\omega^0\, dm\, dt \displaybreak[0] \\
		&\phantom{=}-\frac{\Sigma}{2\pi}\hat h(0) \int_{\lvert t\rvert \ge \tilde \delta \sqrt n}e^{\frac{its}{\sqrt n}} \cdot e^{-\frac{\Sigma^2 t^2}{2}}\, dt=: (I)+(II)+(III)+(IV)+(V)
	\end{align*}
	By repeating the arguments in the proof of~\cite[Theorem C]{D}, we show that the terms $(I)-(V)$ converge to $0$ uniformly in $s$, when $n\to \infty$. In particular, the aperiodicity condition~\eqref{ap} will take care of $(IV)$,  Lemma~\ref{444} enables us to handle $(III)$,
	while the fact that $(V)$ converges to $0$ uniformly in $s$ follows easily from the dominated convergence theorem. In order to handle the terms $(I)$ and $(II)$, one needs the following result whose statement and the proof is the same as~\cite[Lemma 4.6]{D}.
	\begin{lemma}\label{ProdLam}
		For $\tilde \delta >0$ sufficiently small, there exists $n_0\in \N$ such that for $\mathbb P$-a.e. $\omega \in \Omega$, $n\ge n_0$ and $t$ such that $\lvert t\rvert < \tilde \delta \sqrt n$,
		\[
		\bigg{\lvert} \prod_{j=0}^{n-1} \lambda_{\sigma^j \omega}^{\frac{it}{\sqrt n}} \bigg{\rvert} \le e^{-\frac{t^2 \Sigma^2}{8}}.
		\]
		
	\end{lemma}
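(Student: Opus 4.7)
The strategy is to take the logarithm of the product, expand each log-eigenvalue via Taylor's formula around $\theta = 0$, apply Birkhoff's ergodic theorem to the leading quadratic term, and absorb the remainder into the exponent by choosing $\tilde{\delta}$ sufficiently small.

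More precisely, let $H \colon U'' \to L^\infty(\Omega)$ be the analytic map $H(\theta)(\omega) := \log \lambda_\omega^\theta$, where $U''$ is a small enough complex neighborhood of $0$ on which this branch of the logarithm is well-defined (noting $\lambda_\omega^0 = 1$). By Lemma \ref{lemma:derivatives}, $H(0) = 0$, $H'(0) = 0$, and $H''(0)(\omega) = \int_X \psi_\omega^2 v_\omega^0 \, dm + 2\sum_{j=1}^\infty \int_X \psi_\omega \L^{j}_{\sigma^{-j}\omega}(\psi_{\sigma^{-j}\omega} v^0_{\sigma^{-j}\omega}) \, dm$ is a real-valued element of $L^\infty(\Omega)$ whose $\mathbb P$-integral equals $\Sigma^2$ (compare the computation of $\Sigma^2$ in Lemma \ref{var} and the proof of Theorem \ref{thm:CLT}). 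Analyticity of $H$ gives, for some $M > 0$ and some $r_0 > 0$, the Taylor expansion
\[
H(\theta)(\omega) = \tfrac{\theta^2}{2} H''(0)(\omega) + \theta^3 \tilde{R}(\theta)(\omega), \qquad \esssup_{\omega \in \Omega} |\tilde{R}(\theta)(\omega)| \le M \quad \text{for } |\theta| \le r_0.
\]

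Now I would set $\theta = it/\sqrt{n}$, which lies in $U''$ provided $|t| < \tilde{\delta}\sqrt{n}$ with $\tilde{\delta} \le r_0$, and take real parts. Since $H''(0)$ is real, this gives
\[
\log\bigg|\prod_{j=0}^{n-1} \lambda_{\sigma^j\omega}^{it/\sqrt n}\bigg| = \sum_{j=0}^{n-1} \mathrm{Re}\, H(it/\sqrt n)(\sigma^j \omega) = -\frac{t^2}{2n} \sum_{j=0}^{n-1} H''(0)(\sigma^j \omega) + E_n(\omega, t),
\]
where $|E_n(\omega, t)| \le n \cdot (|t|/\sqrt n)^3 \cdot M = M|t|^3/\sqrt n$. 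For $|t| < \tilde{\delta}\sqrt n$, we can further estimate $M|t|^3/\sqrt n \le M \tilde{\delta} \, t^2$.

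Birkhoff's ergodic theorem applied to the $\mathbb P$-integrable function $H''(0)$ gives, for $\mathbb P$-a.e.\ $\omega \in \Omega$, an integer $n_1(\omega)$ such that $\frac{1}{n}\sum_{j=0}^{n-1} H''(0)(\sigma^j \omega) \ge \frac{3\Sigma^2}{4}$ for every $n \ge n_1(\omega)$. Combining the two estimates, for such $\omega$ and $n \ge n_1(\omega)$,
\[
\log\bigg|\prod_{j=0}^{n-1} \lambda_{\sigma^j\omega}^{it/\sqrt n}\bigg| \le -\frac{3 t^2 \Sigma^2}{8} + M\tilde{\delta}\, t^2.
\]
Choosing $\tilde{\delta} > 0$ so small that $\tilde{\delta} \le r_0$ and $M\tilde{\delta} \le \Sigma^2/4$ yields the desired bound $-t^2\Sigma^2/8$. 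The main (mild) subtlety is that $n_0$ here depends on $\omega$ through the Birkhoff convergence, but inspection of the proof of Theorem \ref{thm:LCLT} shows that a.s.\ convergence is all that is needed, matching the usage of the analogous Lemma 4.6 in \cite{D}.
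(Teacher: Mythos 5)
Your proof is correct and is essentially the intended argument: the paper omits the proof and refers to \cite[Lemma 4.6]{D}, whose proof is exactly this second-order Taylor expansion of $\theta\mapsto\log\lambda_\omega^\theta$ (with remainder uniform in $\omega$ thanks to analyticity into $L^\infty(\Omega)$, as in Lemma \ref{lemma:derivatives}) combined with Birkhoff's ergodic theorem for $H''(0)$, i.e.\ the same expansion the paper already uses in its proof of Theorem \ref{thm:CLT}. The $\omega$-dependence of $n_0$ that you flag is harmless and matches the cited source, since in the proof of Theorem \ref{thm:LCLT} the bound is only invoked for fixed $\omega$ in a full-measure set as $n\to\infty$ (and the standing assumption $\Sigma^2>0$ is what allows your choice of $\tilde\delta$).
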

	
	\begin{remark}
		Let us comment a bit on the aperiodicity assumption:
		\begin{itemize}
			\item We refer to~\cite[Section 4.3.2]{D} for certain equivalent formulations of the condition~\eqref{ap} (that readily apply to our setting).
			\item One can also formulate and proof the periodic version of the local central limit theorem just as in~\cite[Section 4.4]{D}. 
		\end{itemize}
	\end{remark}
	
	\section{Acknowledgments}
	We would like to thank Yeor Hafouta for several useful comments. 
	
	DD was supported in part by Croatian Science Foundation under the project
	IP-2019-04-1239 and by the University of Rijeka under the projects uniri-prirod-18-9 and uniri-pr-prirod-19-16.
	
	JS was supported by the European Research Council (ERC) under the European Union's Horizon 2020 research and innovation programme (grant agreement No 787304).

	\begin{footnotesize} 
		\bibliographystyle{amsplain}
		
	\end{footnotesize}
\end{document}